\numberwithin{equation}{section}
\numberwithin{table}{section}
\numberwithin{figure}{section}
\title[Symplectic Grassmannians and  rank $2$ symplectic matroids]{On $T$-invariant subvarieties of symplectic Grassmannians  and  representability of rank $2$  symplectic matroids over ${\mathbb C}$}
\author[P. L. del Angel]{Pedro L. del Angel}
\address[P. L. del Angel]{Centro de Investigaci\'on en Matem\'aticas, A. C., Guanajuato, M\'e\-xi\-co.}
\email{luis@cimat.mx}
\author[E. J. Elizondo]{E.~Javier Elizondo}
\address[E. J. Elizondo]{Instituto de Matem\'aticas\\ Universidad Nacional Aut\'onoma de M\'e\-xi\-co\\ 
04510 Ciudad de M\'exico,  M\'exico.}
\email{javier@im.unam.mx}\thanks{The second author would like to thank UNAM and its PASPA program of DGAPA}
\author[C. Garay]{Cristhian Garay}
\address[C. Garay]{Centro de Investigaci\'on en Matem\'aticas, A. C., Guanajuato, M\'exico.}
\email{cristhian.garay@cimat.mx}
\author[F. Zald\'{\i}var]{Felipe Zald\'{\i}var}
\address[F. Zald\'{\i}var]{Departamento de Matem\'aticas\\
Universidad Aut\'onoma Metropolitana-I\\
09340 Ciudad de M\'exico,  M\'exico.}
\email{fz@xanum.uam.mx}
\date{\today}
\keywords{Matroids, Grassmannian, Symplectic Grassmannian, Symplectic Matroids}
\subjclass[2020]{14M15, 14N15, 14L30, 05B35, 05E14}
\DeclareMathOperator{\stab}{stab}
\DeclareMathOperator{\diag}{diag}
\newtheorem{corollary}{Corollary}[section]
\newtheorem{theorem}[corollary]{Theorem}
\newtheorem{lemma}[corollary]{Lemma}
\newtheorem{proposition}[corollary]{Proposition}
\theoremstyle{definition}
\newtheorem{definition}[corollary]{Definition}
\newtheorem{remark}[corollary]{Remark}
\newtheorem{example}[corollary]{Example}
\begin{document}
\maketitle

\begin{abstract}
For the symplectic Grassmannian $\text{SpG}(2,2n)$ of $2$-dimensional isotropic subspaces  in a $2n$-dimensional vector space over an algebraically closed field of characteristic zero  endowed with a symplectic form and with the natural action of an $n$-dimensional torus $T$ on it, we characterize its irreducible $T$-invariant  subvarieties. This characterization is in terms of symplectic Coxeter matroids,  and we use this result to give a complete characterization of the symplectic matroids  of rank $2$ which are representable over $\mathbb{C}$.
\end{abstract}

\section{Introduction}
Given the action of the maximal torus $T$ on a Grassmannian, it  is very hard to characterize the $T$-invariant subvarieties, and despite being an important problem little is known in the literature. In this paper we solve this problem for the symplectic Grassmannian $\text{SpG}(2,2n)$. We are able to characterize all the $T$-invariant subvarieties of this symplectic Grassmannian. 

Combinatorics and algebraic groups are intimately related, and the article \cite{GGMS} of Gelfand, Goresky,  MacPherson and Serganova showed how  matroids are important to understand the $T$-invariant subvarieties. Since not all symplectic matroids of rank two are representable over $\mathbb{C}$ (see \cite[Section 3.4.3]{BGW}), one of the important results in this paper is to characterize which ones are representable.  

Coxeter matroids  play the same role for other types of flag varieties that matroids play for the type-A Grassmannian. Part of their importance rests on
the relations between them and the geometry of the corresponding flag varieties. A natural setting where Coxeter matroids appear is when one considers a semisimple algebraic group $G$ over 
an algebraically closed field $K$ of characteristic zero, a {parabolic subgroup} $P\subseteq G$, a maximal torus $T\subseteq P$  with Lie algebra $\mathfrak{t}$ and  Weyl group $W$, and the projective variety $X=G/P$. Then one has:  
\begin{theorem}[See \cite{BGW}]\label{thm1}
If $\mu:X\longrightarrow\mathfrak{t}^*_\mathbb{R}$ is  the corresponding \textit{moment map}, then  $\mu( \overline{Tx})$ is a  Coxeter matroid associated to the Weyl group $W$ for every $x\in X$.
\end{theorem}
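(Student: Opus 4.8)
The plan is to recover the combinatorial object $\mu(\overline{Tx})$ from the geometry of the $T$-action on $X=G/P$, by combining the classical description of the moment polytope of $X$, a convexity argument for the closure of a single orbit, and the Gelfand--Serganova characterization of Coxeter matroid polytopes. First I would set up the picture over $K=\mathbb{C}$. The torus $T$ acts on $X=G/P$ with finitely many fixed points $x_w$, in bijection with $W/W_P$ (equivalently with the minimal-length coset representatives $W^P$). Choose $\lambda\in\mathfrak{t}^*_{\mathbb{R}}$ dominant and lying in the relative interior of the face of the dominant chamber cut out by $P$, so that $\mathrm{Stab}_W(\lambda)=W_P$, the map $w\mapsto w\lambda$ is injective on $W^P$, and $X$ embeds $G$-equivariantly in $\mathbb{P}(V_\lambda)$. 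For the moment map $\mu$ of this embedding one has the classical facts $\mu(x_w)=w\lambda$ and $\mu(X)=\mathrm{conv}(W\lambda)$, the weight polytope; moreover every edge of $\mu(X)$ is parallel to a root of $G$, since the edge from $w\lambda$ to $ws\lambda$ (with $s$ a simple reflection, $ws$ in a different $W_P$-coset) has direction $w(\lambda-s\lambda)=\langle\lambda,\alpha_s^\vee\rangle\, w\alpha_s$, a positive multiple of the root $w\alpha_s$.

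Next, fix $x\in X$ and set $Y=\overline{Tx}$, an irreducible projective $T$-variety. By Bialynicki--Birula theory a generic one-parameter subgroup of $T$ has fixed locus $Y^T=Y\cap X^T$, which is therefore finite; write $Y^T=\{x_w:w\in\mathcal{M}_x\}$ with $\mathcal{M}_x\subseteq W^P$. Applying the convexity theorem of Atiyah--Guillemin--Sternberg to the Hamiltonian $T$-space $Y$ (passing to a $T$-equivariant resolution when $Y$ is singular) yields
\[
\mu(Y)=\mathrm{conv}\{w\lambda:w\in\mathcal{M}_x\}=:\Delta_x .
\]
It then remains to show that $\mathcal{M}_x$ is a Coxeter matroid for $W$ relative to $W_P$. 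By the Gelfand--Serganova theorem this is equivalent to $\Delta_x$ having all edges parallel to roots, which in turn is equivalent to the maximality property of $\mathcal{M}_x$: for every $w\in W$ there is a unique element of $\mathcal{M}_x$ maximal in the shifted Bruhat order $\leq^w$. I would verify the edge condition directly: an edge $e$ of $\Delta_x$ is the $\mu$-image of a $T$-invariant curve $C\subseteq Y$ joining the two fixed points over its endpoints; $C$ lies in the closure of a one-dimensional $T$-orbit of $X$, whose $\mu$-image is an edge of $\mu(X)$, hence root-parallel, so $e$ is too. Equivalently, one can read off the maximality property from a limit: given $w$, pick a one-parameter subgroup $\gamma$ of $T$ deep in the chamber indexed by $w$; then $\lim_{t\to 0}\gamma(t)x=x_u$ for some $u\in\mathcal{M}_x$, and for the appropriate sign the linear functional $\langle\,\cdot\,,\gamma\rangle$ is maximized on $\{v\lambda:v\in\mathcal{M}_x\}$ uniquely at $u\lambda$ — which is exactly the assertion that $u$ is the $\leq^w$-maximum of $\mathcal{M}_x$.

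The step I expect to be the main obstacle is the convexity/dictionary step on a possibly singular $Y$: one must make precise that $\mu(Y)$ is genuinely the convex hull of the fixed-point images, that its edges really arise from $T$-invariant curves, and — crucially — that no two vertices of $\Delta_x$ collapse, which is guaranteed only by the careful choice of $\lambda$ in the relative interior of the $P$-face so that $w\mapsto w\lambda$ is injective on $W^P$. Finally, for an arbitrary algebraically closed field of characteristic zero, one either transfers the conclusion from $\mathbb{C}$ by the Lefschetz principle, or replaces $\mu$ throughout by its algebraic surrogate — the map sending a $T$-stable point of $\mathbb{P}(V_\lambda)$ to the convex hull of the $T$-characters occurring in the corresponding line — for which all of the above arguments go through verbatim with Bialynicki--Birula decompositions and toric geometry in place of symplectic geometry.
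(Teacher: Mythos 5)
The paper does not prove this statement; it is quoted from \cite{BGW} as background for the whole program, so there is no in-paper proof to compare against. Your outline reproduces the standard Gelfand--Serganova argument from that reference: identify $\mathcal{M}_x$ with $\overline{Tx}\cap X^T$, use convexity to get $\mu(\overline{Tx})=\mathrm{conv}\{w\lambda:w\in\mathcal{M}_x\}$, and then verify the Coxeter matroid property via the root-parallel edge criterion or, equivalently, via the limit-point computation of the $\leq^w$-maxima. Both routes you sketch are the ones actually used in \cite{BGW}, and the overall structure is sound.

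One imprecision in the edge-verification step: you assert that the $\mu$-image of a $T$-invariant irreducible curve $C\subseteq\overline{Tx}$ ``is an edge of $\mu(X)$.'' This is not true in general. Such a curve connects fixed points $x_u$ and $x_{us_\alpha}$ for some (not necessarily simple) reflection $s_\alpha$, and for non-simple $\alpha$ the segment $[u\lambda,\,us_\alpha\lambda]$ can be an interior chord of the weight polytope rather than an edge (already visible for $W=B_2$, where there are $16$ reflection-related vertex pairs but only $8$ edges of the octagon). So you cannot conclude root-parallelism from the edge structure of $\mu(X)$. The fix is to apply the same computation you already made for simple reflections, but for an arbitrary reflection $s_\alpha$: the direction vector is $u\lambda - us_\alpha\lambda = \langle\lambda,\alpha^\vee\rangle\,u\alpha$, a positive multiple of the root $u\alpha$, hence root-parallel regardless of whether the segment is an edge of the ambient weight polytope. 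With that replacement the argument is complete and matches the source.
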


A basic example of this situation is the Grassmannian $G(d,n)$  for $0< d< n$, which is a quotient $G(d,n)=G/P$ for $G=\text{GL}_n(K)$ and  $P=P_{n,d}$ the subgroup of block matrices of the form $\left( \begin{smallmatrix} A&B\\ 0&C \end{smallmatrix}\right)$ where $A\in \text{GL}_d(K)$ and $C\in \text{GL}_{n-d}(K)$. The maximal torus $T\subseteq P$ is the subgroup of diagonal matrices with non-zero entries, and the Weyl group $W$ is the symmetric group $S_n$ on $n$ letters.
 For any $x\in G(d,n)$, the polytope  $\mu( \overline{Tx})$ is then a  Coxeter matroid associated to ${S}_n$ which is representable over $K$; if we denote by 
 $M^d_{S_n}(K)$  the set of these objects, which are just called $K$-representable matroids of rank $d$ on $n$ labels, then we have a decomposition
 \begin{equation}\label{eq1.0}
 G(d,n)=\bigsqcup_{M\in M^d_{S_n}(K)}G_M,
 \end{equation}
 in a  disjoint union of the (nonempty) locally closed sets $G_M=\{x\in G(d,n)\::\:\mu( \overline{Tx})=M\}$. The set $G_M$ is called the {\it (symmetric) thin Schubert cell} of $G(d,n)$ associated to $M\in M^d_{S_n}(K)$. These thin Schubert cells are $T$-invariant spaces that are closely related to the usual Schubert cells and  they fit into a (trivial) torus bundle:
\begin{equation}\label{eq1.1}
    \xymatrix{T_M\ar[r]&G_M\ar[r]&\mathcal{G}_M} \quad \text{for  $M\in M^d_{S_n}(K)$},
\end{equation}
where $T_M\subseteq T$  is a subtorus depending on $M$ acting freely on $G_M$, and $\mathcal{G}_M=G_M/T_M$. Using the quotient spaces $\mathcal{G}_M$ and the action of ${S}_n$ on the set $M^d_{S_n}(K)$, some parameter spaces for the $T$-invariant subvarieties of $G(d,n)$ having a common homology class $\lambda\in H_*(G(d,n),\mathbb{Z})$ were constructed by Elizondo, Fink and Garay in \cite{EFG}.
\medskip

 Our aim now is to apply the program from \cite{EFG} to the symplectic Grassmanniann $\text{SpG}(2,2n)$ of $2$-dimensional isotropic spaces  in a $2n$-dimensional vector space with a symplectic form. We recall that $\text{SpG}(2,2n)$ is as a homogeneous space $\text{Sp}_{2n}/P$ for the symplectic group $\text{Sp}_{2n}$ and $P\subseteq \text{Sp}_{2n}$ a maximal parabolic subgroup. The maximal torus $T\subseteq P$ is a subgroup of certain diagonal matrices with non-zero entries, and the Weyl group is the hyperoctahedral group $BC_n$. 
 
We have that for any $x\in \text{SpG}(2,2n)$  the polytope  $\mu( \overline{Tx})$ is  a  Coxeter matroid associated to ${BC}_n$ which is representable over ${\mathbb C}$; we denote by 
 $M^2_{{BC}_n}({\mathbb C})$  the set of these objects which are just called ${\mathbb C}$-representable ${BC}_n$- matroids of rank $2$ on $2n$ labels. Our proof uses the fact that $\text{SpG}(2,2n)$ is  
 a  hyperplane section of the type-A Grassmannian $G(2,2n)$ (see Section \ref{sec3} for details) and then we apply the theory of type-A (i.e., for the symmetric group) matroids of rank 2, in particular that every type-A matroid is representable  over an algebraically closed field of  characteristic zero (see Remark \ref{remark_representable}). 
 
 Our main results
 include a characterization of the $T$-invariant varieties of the symplectic Grassmannian $\text{SpG}(2,2n)$
 using a moduli of irreducible quotients associated to type-A  and type-BC (symplectic) Coxeter matroids. Using the moment map, we show that for each thin Schubert cell in $\text{SpG}(2,2n)$ there exists a unique lifting to a thin Schubert cell in $G(2,2n)$. As a by-product, we obtain a new characterization of rank 2 symplectic matroids using a type-A representative.

  The article is organized as follows. In Section \ref{sec2} we start by recalling some basic facts about Grassmannians and matroids with emphasis on some relevant results of \cite{EFG}. Next, we fix the notation and recall some facts about symplectic matroids following \cite{BGW} and the action of the maximal $2n$-torus on the Grassmannian $G(2,2n)$. \par
 In Section \ref{sec3}, after recalling its definition as a homogeneous space, we give an explict description of the symplectic Grassmannian $\text{SpG}(2,2n)$  as a hyperplane section of the Grassmannian $G(2,2n)$. This allows us to compute its integral homology group and describe its Schubert cells. Next, we 
  detail the structure of $\text{SpG}(2,2n)$ as a $T$-variety, where $T$ is an $n$-dimensional torus, and calculate its $T$-fixed points, orbits and stabilizers. Theorem \ref{thm3.14} and Proposition \ref{prop3.16} give the symplectic analogue of the torus bundle \eqref{eq1.1}, and  
  Theorem \ref{thm3.18} gives a description of the $T$-invariant subvarieties of the symplectic Grassmannian $\text{SpG}(2,2n)$.\par
In Section \ref{sec4}  we obtain 
in Theorem \ref{theorem3.15}  
a stratification \eqref{eq5.1a} of the symplectic Grassmannian
$\text{SpG}(2,2n)$. Using this stratification we obtain the symplectic analogues of thin Schubert cells, a new characterization of symplectic matroids of rank $2$, and a criterion for the representability over ${\mathbb C}$ of these matroids. In Section \ref{sec5} we calculate the dimension of the symplectic thin Schubert cells and obtain a description of the $T$-invariant schematic points of the symplectic Grassmannian.
\newline

\subsection{Notation.} We work over ${\mathbb C}$, and for $n\geq2$, we denote by $[2n]$ the set $\{1,\ldots,2n\}$ and by $N=\binom{2n}{2}-1=n(2n-1)-1$. 
Let $\mathbb{P}^{N}=\mathbb{P}^{N}_{\mathbb C}$ denote the projective space over ${\mathbb C}$  with projective coordinates $x_{i,j}$ for $\{i,j\}\in \binom{[2n]}{2}$. 
Coxeter matroids with respect to the Weyl group $W$ will be called  $W$-matroids.
 
\section{Preliminaries}\label{sec2}
\subsection{The Grassmannian of lines  $G(2,2n)$}\label{subsec2.1}
In this section we  recall some basic facts about Grassmannians  of lines. Unless otherwise stated, order arguments on $[2n]$ will be made with respect to the usual order: $1<2<\cdots<2n$.

Let $G(2,2n)$ denote the Grassmannian variety of $2$-dimensional vector subspaces in a complex vector space of dimension $2n$, which we consider embedded in $\mathbb{P}^{N}$ by means of the Pl\"ucker map.
The algebraic $2n$-dimensional torus $T'$  of invertible diagonal matrices $\text{diag}(\lambda_i)=\text{diag}(\lambda_1,\ldots,\lambda_{2n})$ with $\lambda_i\in{\mathbb C}^*$  acts on a point $x\in G(2,2n)$ by scaling the columns of a matrix representation of it. Thus,
 if $x$ has Pl\"ucker coordinates $[x_{i,j}]$ and $\text{diag}(\lambda_i)\in T'$, then $\text{diag}(\lambda_i)\cdot x$ has Plucker coordinates $[\lambda_i\lambda_jx_{i,j}]$.

\subsection{$S_{2n}$-Matroids of rank 2}\label{subsec2.2}
Given  $x\in G(2,2n)$ with coordinates $[x_{i,j}]\in \mathbb{P}^{N}$, we denote by ${\mathcal B}'(x)=\{\{i,j\}\in\binom{[2n]}{2}\::\:x_{i,j}\neq0\}$. Let $\mu_{T'}: G(2,2n)\to \mathbb{R}^{2n}$ be the corresponding  moment map. If $\{e_i\::\:1\leq i\leq 2n\}$ is the canonical basis of $\mathbb{R}^{2n}$, then $\overline{T'\cdot x}$ is a (normal) toric variety of dimension $0\leq d\leq 2n-1$ satisfying

\begin{equation}
\label{equation_moment}
    \mu_{T'}(\overline{T'\cdot x})=\text{Conv}\{e_i+e_j\::\:\{i,j\}\in {\mathcal B}'(x)\}.
\end{equation}

Conversely, we can recover the set ${\mathcal B}'(x)$ from the polytope $\mu_{T'}(\overline{T'\cdot x})$ by taking its set of integral points: ${\mathcal B}'(x)=\mu_{T'}(\overline{T'\cdot x})\cap\mathbb{Z}^{2n}$. 
In particular, if $x=[x_{\{i,j\}}]$ satisfies $x_{\{i,j\}}\neq0$ for all $\{i,j\}\in\binom{[2n]}{2}$, then ${\mathcal B}'(x)=\mu_{T'}(G(2,2n))$   is the  hypersimplex 
\begin{equation}
 \Delta(2,2n)=\text{Conv}\Big\{e_1+e_j\::\:\{i,j\}\in\binom{[2n]}{2}\Big\}\subset H
\end{equation}
which is a lattice polytope of maximal dimension $2n-1$ inside the the affine hyperplane $H=\{x\in\mathbb{R}^{2n}\::\:\sum_ix_i=2\}$.

\begin{definition}
\label{S2nmatroid}
 If $x\in G(2,2n)$, we call either the subpolytope $\mu_{T'}(\overline{T'\cdot x})\subset \Delta(2,2n)$ or the pair $([2n],{\mathcal B}'(x))$ the $S_{2n}$-{\it matroid} of $x$, and we denote it by $M'(x)$.
\end{definition}

\begin{remark}
We have that $M'(x)$ is an $S_{2n}$-matroid of rank 2 on $[2n]$, and  every such matroid is of this form. This means that the set $M^2_{S_{2n}}$  of $S_{2n}$-matroids of rank 2 on $[2n]$ coincides with the set $M^2_{S_{2n}}({\mathbb C})$ of $S_{2n}$-matroids of rank 2 representable over ${\mathbb C}$.
\end{remark}

\begin{definition}
\label{def:nif}
 A non-intersecting family of $[2n]$ is a family $\Pi=\{P_1,\ldots,P_l\}$ of $l\geq2$ non-empty, pairwise disjoint subsets of $[2n]$. Sometimes we refer to the sets $P_i$ as the bags of $P$. 
\end{definition}

A matroid of rank 2 on $[2n]$ is a pair $M=([2n],\Pi)$, where $\Pi$ is a non-intersecting family of $E_n$.  The set of bases $B(M)$ of this matroid is the set of partial $2$-transversals of $\Pi$. See \cite{EFG}, where they were called {\it partitions}.
\subsection{The quotient set $M^2_{S_{2n}}/S_{2n}$}\label{subsec2.3}
Following \cite[Section 4]{EFG}, 
given $\tau\in S_{2n}$ and $\{i,j\}\in\binom{[2n]}{2}$, we define $\tau(\{i,j\})=\{\tau(i),\tau(j)\}$. We can define an action of $S_{2n}$ on $M^2_{S_{2n}}$ as follows: if $M=([2n],B)\in M^2_{S_{2n}}$ and $\tau\in S_{2n}$, then $\tau\cdot M:=([2n],\tau(B))$. The resulting orbit space $M^2_{S_{2n}}/S_{2n}$ can be described through partitions as follows.

We denote by $\Pi_{2n}$ the set of formal expressions $\prod_{i=1}^\ell k_i$ such that $\ell\geq2$, $1\leq k_i<2n$, and $w(\pi)=\sum_ik_i\leq 2n$.  Then   $M^2_{S_{2n}}/S_{2n}\cong \Pi_{2n}$ as sets (cf. \cite[Theorem 4.8]{EFG}). Furthermore, there is a canonical section  $\Pi_{2n}\hookrightarrow M^2_{S_{2n}}/S_{2n}$ constructed as follows: for $\pi=\prod_{i=1}^l k_i\in\Pi_{2n}$ with $k_1\geq\cdots\geq k_l$ and $w(\pi)=\sum_ik_i$, we will  form a particular non-intersecting family $\{P_{k_1},\ldots,P_{k_l}\}$ of $[w(\pi)]=\{1,\ldots,w(\pi)\}$, which we will also call $\pi$, defined by:
\begin{equation}
\label{particular_partition}
P_{k_i}=
\begin{cases}
\{1,\ldots,k_1\},&\text{ if }i=1,\\
\{k_1+\cdots+k_{i-1}+1,\ldots,k_1+\cdots+k_{i}\},&\text{ if }i>1.\\
\end{cases}
\end{equation}
We denote by $M(\pi)=([w(\pi)],B_\pi)$ the loopless matroid on $[w(\pi)]$ defined by the partition \eqref{particular_partition}, that is, $\{i,j\}$ is not a basis if and only if $i,j$ belong to the same $P_{k_j}$.

The section $\Pi_{2n}\hookrightarrow M^2_{S_{2n}}/S_{2n}$ sends $\pi$ to  $M_\pi=([n],B_\pi)=M(\pi)\oplus (U_1^0)^{\oplus {[n-w(\pi)]}}$, where $(U_1^0)^{\oplus {[n-w(\pi)]}}$ is the uniform matroid on $[n-w(\pi)]$ of rank 0, i.e., it only consists of loops.

\begin{remark}
\label{remark_representable}
With these observations we can  see that any matroid of rank 2 is representable. If $M$ is a matroid with equivalence class $\pi$, then there exists $\tau\in S_n$ such that $\tau(M)=M_\pi$. It is easy to construct a linear space $V\subset {\mathbb C}^n$ of dimension 2 such that $M(V)=M_\pi$, namely, we choose $\ell(\pi)$ vectors on $\mathcal{M}_{0,\ell(\pi)}$ (the Moduli space of marked rational curves with $\ell(\pi)$ marks) which will be the columns of a matrix $A$ such that $M(A)=M_\pi$.

\end{remark}

\subsection{$BC_{n}$-Matroids of rank 2}\label{subsec2.4}
The source for this part is \cite[Chapter 3]{BGW}.    It is convenient to define  $E_n=[n]\cup [n^*]$, where $[n]=\{1,\ldots,n\}$ and $[n^*]=\{n^*,\ldots,1^*\}$ with $i^*=2n-i+1$ for $1\leq i\leq n$,  which satisfies $i^{**}=i$.  If $S\subset E_n$ denote by ${S}^*=\{i^*\::\:i\in S\}$. We denote the obvious bijection by $p:E_n \rightarrow [2n]$.

The {\it hyperoctahedral group} $BC_n=\{\tau\in S_{2n}\::\:\tau({i}^*)={\tau(i)}^*\}\subset S_{2n}$ is the Weyl group of type B-C, and is isomorphic to the group of symmetries of the $n$-cube $[-1,1]^n\subset \mathbb{R}^n$.

\begin{definition}\label{Def_BC_nM}
(1) We say that $S\subset E_n$ is an {\it admissible set} if $S\cap{S}^*=\emptyset$. We denote by $J_n^2=\{\{i,j\}\in\binom{E_n}{2}\::\:\{i,j\}\text{ is an admissible set}\}$; this is, $\{i,j\}\in\binom{E_n}{2}$ such that $j\neq i^*$.

\noindent (2) A total order $<$ on $E_n$ is {\it admissible} if when the $2n$ elements are listed from largest to smallest, the first $n$ elements form an admissible set, and the last listed $n$ elements  are the stars of the first $n$ elements listed in reverse order.
We will consider the standard admissible ordering 
 $1<2<
\cdots<n<n^*<\cdots<2^*<1^*$ on $E_n$.

\noindent (3) For any admissible order $<$ on $E_n$, we define a partial order on $J_n^2$ as follows. Given $\{i<j\},\{k<l\}\in J_n^2$, we say that  $\{i<j\}<\{k<l\}$ if $i< k$ and $j< l$. These partial orders are precisely the $w$-shifted Bruhat orders for $BC_n$, thus the following definitions makes sense:
\begin{enumerate}
    \item For $B\subset J_n^2$, the {triple $M=(E_n,*,B)$ is a $BC_{n}$-matroid} of rank 2 if it satisfies the maximality property, that is, $B$ has a unique maximal element with respect to every admissible order $<$ on $E_n$. These are also called symplectic matroids of rank 2 on $E_n$.  
We say that $B$ is the set of bases of $M$. We will denote by $M^2_{BC_{n}}$ the set of $BC_{n}$-matroids of rank 2 on $E_n$. 
\item Given $\tau\in BC_n$ and $\{i,j\}\in J_n^2$, we set $\tau(\{i,j\})=\{\tau(i),\tau(j)\}$. We have an action of $BC_n$ on $M^2_{BC_{n}}$ defined for { $M=(E_n,*,B))\in M^2_{BC_{n}}$}  and  $\tau\in BC_n$, by $\tau\cdot M:=(E_n,*,\tau(B))\in M^2_{BC_{n}}$, where by the definition of $W$-matroid we have that $\tau\cdot M\in M^2_{BC_{n}}$.
\end{enumerate}

For the canonical order $<$ on $J_n^2$,  two $BC_{n}$-matroids $B,B'\subset (J_n^2,<)$ are isomorphic if there exists $\tau\in BC_n$ such that $\tau(B)=B'$.
\end{definition}

\begin{remark}
The number of admissible orders on $E_n$ is $2^n\cdot n!$, and half of them are the opposite orders of the other half. The same holds for $J_n^2$.
It follows that the maximality property from Definition \ref{Def_BC_nM} is equivalent to the minimality property: $B$ has a unique minimal element with respect to every admissible order $<$ on $E_n$. Hence, every matroid $B\subset (J_n^2,<)$ is contained in an interval $[a,b]=\{x\in J_n^2\::\: a\leq x\leq b\}$
\end{remark}

\begin{example}
\label{ex:iso_class}
If $n=2$, there are $2^22!=8$ admissible orders, the four different ones are 
\begin{align*}
1<2<2^*<1^*\qquad 1<2^*<2<1^*\\
2<1<1^*<2^*\qquad 2<1^*<1<2^*
\end{align*}
and the other four are the opposite orders to these. Thus, the admissible orders on $J_2^2$ are 
\begin{align*}
12<12^*<21^*<2^*1^*\qquad 12^*<12<2^*1^*<21^*\\ 21<21^*<12^*<1^*2^*\qquad 21^*<21<1^*2^*<12^* 
\end{align*}
and the other four are the opposite orders to these. Since these are total orders, every nonempty subset of $J_2^2$ is a $BC_{2}$-matroid of rank 2 and the set $M^2_{BC_{2}}$ has $15=2^4-1$ elements. 
The group $BC_2$ is the dihedral group $D_8$, and $M^2_{BC_{2}}/BC_2$ has five orbits:
\begin{enumerate}[]
    \item $A_1=\binom{J_2^2}{1}$,
    \item $A_{2,1}=\{\{\{1,2\},\{1^*,2^*\}\},\{\{1^*,2\},\{1,2^*\}\}\}$,
    \item $A_{2,2}=\binom{J_2^2}{2}\setminus A_{2,1}$
    \item $A_3=\binom{J_2^2}{3}$
    \item $A_4=J_2$
\end{enumerate}
Compare with Example \ref{example3.3}.
\end{example}

\section{The Symplectic Grassmannian of isotropic lines}\label{sec3}

Given  a positive integer $n$, let $V$ be complex vector space $V$ of  dimension $2n$ endowed with a non-degenerate alternating  bi-linear form $\omega$. Given another integer $1\leq k\leq n$, an element $W\in G(k,2n)=G(k,V)$ is isotropic if $\omega(W,W)=0$.  As a set,
the {\it symplectic Grassmannian} is 
$$\text{SpG}(k,2n)=\{W\in G(k,2n): W \text{ is isotropic}\}.$$
This is a projective variety that can be described  from different perspectives, which we now recall.

\subsection{As a homogeneous space}\label{subsec3.1}
Choosing an adequate basis of the symplectic vector space $(V,\omega)$, the Gram matrix of the symplectic form is $J=\left(\begin{smallmatrix}0&J_n\\-J_n&0
\end{smallmatrix}\right)$, where $J_n$ is the  $n\times n$ matrix with $1$'s in its antidiagonal. This defines the involution  $\text{GL}_{2n}\xrightarrow\sigma  \text{GL}_{2n}$ as  $A\mapsto J(A^t)^{-1}J^{-1}$, where $J^{-1}=-J$. Hence, 
the symplectic group is $\text{Sp}_{2n}=\{A\in \text{GL}_{2n}\::\:\sigma(A)=A\}$,  The natural transitive action of $\text{GL}_{2n}$ on the Grassmannian $G(k,2n)$ restricts to an action of $\text{Sp}_{2n}$ on the symplectic Grassmannian $\text{SpG}(k,2n)$, which is transitive by the Witt Extension Theorem \cite{lam}.
The Borel group  of $\text{Sp}_{2n}$ is the intersection $B=B'\cap \text{Sp}_{2n}$ 
of the Borel subgroup $B'$  of  $\text{GL}_{2n}$ with $\text{Sp}_{2n}$. Thence, writing the Grassmannian as a homogeneous space
\[
G(k,2n)\cong \text{GL}_{2n}/P',
\]
where $P'=\text{Stab}_{\text{GL}_{2n}}(W)\subseteq \text{GL}_{2n}$ a maximal parabolic subgroup,  the stabilizer of an element $W\in G(k,2n)$, and using that the action of $\text{Sp}_{2n}$ is transitive, we can write
\[
\text{SpG}(k,2n)= \text{Sp}_{2n}/\text{Stab}_{\text{Sp}_{2n}}(W),
\]
where $P=\text{Stab}_{\text{Sp}_{2n}}(W)=P'\cap \text{Sp}_{2n}$. Therefore, as a homogeneous space
\[
\text{SpG}(k,2n)=\text{Sp}_{2n}/P.
\]
It is known that $\text{SpG}(k,2n)$ is a smooth
projective irreducible variety of dimension $k(4n-3k+1)/2$. 
 We have:
\begin{enumerate}
\item The natural map $\text{SpG}(k,2n)\simeq \text{Sp}_{2n}/P\to  \text{GL}_{2n}/P'\simeq G(k,n)$ is a closed embedding.
\item $B\omega P\subset B'\omega P'$ for any $\omega\in \text{Sp}_{2n}$ and moreover, 
\[
y\in B\, x\, P\quad\text{if and only if} \quad y\in B'\, x\, P\quad \text{and}\quad y,x\in \text{Sp}_{2n}.
\]
\item In particular, the  Schubert cells in $\text{SpG}(k,2n)\cong \text{Sp}_{2n}/P$ are restrictions to $\text{Sp}_{2n}/P$ of the  Schubert cells in $\text{GL}_{2n}/P'$, i.e.
\begin{equation}\label{eq3.1.0}
    B \overline{x}  = (B' [x])\cap \text{Sp}_{2n}/P
\end{equation}
for every $\overline{x}\in \text{Sp}_{2n}/P$, where $[x]$ is the class of $x$ in $\text{GL}_{2n}/P'$. The inclusion $\subseteq$ in \eqref{eq3.1.0} is clear, while the other one implies the irreducibility of the intersection.
\end{enumerate}

 \subsection{As a linear section of the Grassmannian}\label{subsec3.2} 
With the notation of Section \ref{subsec3.1}, for $2\leq k\leq n$ considering the $k$-th exterior power $\bigwedge^kV$
and the contraction map $c:\bigwedge^kV\rightarrow \bigwedge^{k-2}V$ given by
$$c(v_1\wedge\cdots\wedge v_k)=\sum_{1\leq r<s\leq k}(-1)^{r+s}\omega(v_r,v_s)v_1\wedge\cdots\wedge \hat{v_r}\wedge\cdots\wedge \hat{v_s}\wedge\cdots\wedge v_k.$$
By \cite{BO}, see also \cite{CPZ}, we have that, as projective varieties
$$\text{SpG}(k,2n)=G(k,2n)\cap{\mathbb P}(\ker c)$$
and the vanishing ideal of $\text{SpG}(k,2n)$ is generated by the Pl\"ucker relations in ${\mathbb P}(\bigwedge^kV)$ that define $G(k,2n)$ and the symplectic linear relations, one for each $S\in\binom{E_n}{k-2}$, of the form 
\begin{equation}\label{eq3.2.0}
  \sum_{i,S}x_{Sii^*} 
\end{equation}
where the sum is over all $i\in [n]$ and $i^*\in[n^*]$, with $E_n=[n]\cup [n^*]$ as in Section \ref{subsec2.4}, such that $\{i,i^*\}\cap S=\emptyset$. Moreover, the embedding $\text{SpG}(k,2n)\hookrightarrow {\mathbb P}(\ker c)$ is nondegenerate. The codimension of $\text{SpG}(k,2n)$ inside $G(k,2n)$ is $\binom{k}{2}$, and the corresponding ideal is generated by $\binom{2n}{k-2}$ polynomials (see \cite[pp. 5-6]{BO}). Therefore, $\text{SpG}(k,2n)$ is a smooth complete intersection only for $k=2$. 

\subsection{The symplectic Grassmannian of isotropic lines}\label{subsec3.3} 
For the particular case when $k=2$ there is only one symplectic linear relation \eqref{eq3.2.0} of the form
\begin{equation}
\label{symplectic_rel}
    s=\sum_{i=1}^nx_{i,i^*},
\end{equation}
which defines a hyperplane ${\mathcal H}=\{s=0\}$ such that
$\text{SpG}(2,2n)=G(2,2n)\cap {\mathcal H}$ is smooth.

\begin{proposition}\label{Prop_homology}
We have 
\begin{equation}
    H_i(\text{\rm SpG}(2,2n),\mathbb{Z})=\begin{cases}
    H_i(G(2,2n),\mathbb{Z}),&\text{if }i\leq 4n-5,\\
    H_{i+2}(G(2,2n),\mathbb{Z}),&\text{if }i\geq 4n-4.
\end{cases}
\end{equation}
In particular, they share the same generators for $i\leq 4n-6$, and for  $i\geq 4n-4$, if $\{[\Sigma_j]\}_j$ is a family of generators for $H_{i}(G(2,2n),\mathbb{Z})$, then $\{[\Sigma_j\cap {\mathcal H}]\}_j$ is a family of generators for $H_i(\text{\rm SpG}(2,2n),\mathbb{Z})$.
\end{proposition}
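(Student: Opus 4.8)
The plan is to exploit the fact that $\text{SpG}(2,2n) = G(2,2n) \cap \mathcal{H}$ is a smooth ample hyperplane section of the Grassmannian $G(2,2n)$, which has complex dimension $2(4n - 6 + 1)/2 = 4n-5$ (so real dimension $8n-10$), and invoke the Lefschetz hyperplane theorem. First I would recall that $G(2,2n) \subset \mathbb{P}^N$ via the Plücker embedding, and that $\mathcal{H} = \{s = 0\}$ with $s$ as in \eqref{symplectic_rel} is a genuine hyperplane in $\mathbb{P}^N$; hence $\text{SpG}(2,2n)$ is the zero locus of an ample (indeed very ample) line bundle on the smooth projective variety $G(2,2n)$, and by hypothesis (stated in Section \ref{subsec3.3}) this zero locus is smooth. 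The complex dimension of $\text{SpG}(2,2n)$ is $4n-6$ (using the formula $k(4n-3k+1)/2$ with $k=2$), one less than that of $G(2,2n)$.

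Next I would apply the Lefschetz hyperplane theorem in the form: if $X$ is a smooth projective variety of complex dimension $m$ and $Y \subset X$ is a smooth ample hyperplane section of complex dimension $m-1$, then the inclusion $Y \hookrightarrow X$ induces isomorphisms $H_i(Y,\mathbb{Z}) \xrightarrow{\sim} H_i(X,\mathbb{Z})$ for $i < m-1$ and a surjection for $i = m-1$. Here $m = 4n-5$, so $m - 1 = 4n - 6$ in complex degree, i.e. real degree $2(4n-6) = 8n-12$. Thus $H_i(\text{SpG}(2,2n),\mathbb{Z}) \cong H_i(G(2,2n),\mathbb{Z})$ for real $i \le 8n - 14$, which certainly covers the range $i \le 4n-5$ claimed (since $4n-5 \le 8n-14$ for $n \ge 3$; the small cases $n = 2$ should be checked directly, and there the Grassmannian $G(2,4)$ and its quadric section are classical). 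This handles the first case in the displayed formula.

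For the range $i \ge 4n-4$, the key point is Poincaré duality on both smooth varieties combined with the hard Lefschetz–type statement for the top half: since $\text{SpG}(2,2n)$ has real dimension $8n-12$ and $G(2,2n)$ has real dimension $8n-10$, one transports the low-degree isomorphism across Poincaré duality. Concretely, for $i \ge 4n-4$ I would write $H_i(\text{SpG}(2,2n),\mathbb{Z}) \cong H^{8n-12-i}(\text{SpG}(2,2n),\mathbb{Z})$ and $H_{i+2}(G(2,2n),\mathbb{Z}) \cong H^{8n-10-(i+2)}(G(2,2n),\mathbb{Z}) = H^{8n-12-i}(G(2,2n),\mathbb{Z})$, and then use that cup product with the hyperplane class $[\mathcal{H}]$ gives an isomorphism $H^{8n-12-i}(G(2,2n)) \to H^{8n-12-i}(\text{SpG}(2,2n))$ in this co-range by the cohomological Lefschetz theorem (valid because $G(2,2n)$ has torsion-free, even-degree-only cohomology, so there are no parity or torsion obstructions and the Gysin/restriction maps behave well). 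Chasing through, $[\Sigma_j \cap \mathcal{H}]$ is precisely the image of $[\Sigma_j]$ under the intersection-with-$\mathcal{H}$ map, which is Poincaré-dual to restriction of cohomology classes, so the Schubert-cycle generators of $H_{i+2}(G(2,2n))$ map to generators of $H_i(\text{SpG}(2,2n))$. The main obstacle is bookkeeping the degree shifts carefully and justifying that the relevant Lefschetz maps are isomorphisms over $\mathbb{Z}$ (not just over $\mathbb{Q}$): this is fine here because the cohomology of $G(2,2n)$ is free abelian and concentrated in even degrees, so the integral hard Lefschetz statement for a hyperplane section follows from the standard one without torsion corrections; I would state this reduction explicitly and cite a reference (e.g. the treatment in Lazarsfeld's \emph{Positivity in Algebraic Geometry} or Milnor's \emph{Morse Theory}) rather than reprove it.
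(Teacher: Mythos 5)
Your overall strategy --- Lefschetz hyperplane theorem for the low degrees, then Poincar\'e duality plus cohomological weak Lefschetz for the high degrees --- is precisely the route the paper takes, so the idea is right. But your degree and dimension bookkeeping has several errors that, if taken literally, break the argument, and in particular they hide the one case that actually needs a separate remark.

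The main problem is your application of the Lefschetz hyperplane theorem. The statement you quote is correct and its bound is already in \emph{real} topological degree: if $Y$ is a smooth ample hyperplane section of a smooth projective $X$ of complex dimension $m$, then $H_i(Y,\mathbb{Z})\to H_i(X,\mathbb{Z})$ is an isomorphism for $i<m-1$ and a surjection for $i=m-1$. You then re-read $m-1$ as a complex degree and double it, arriving at an isomorphism range up to real degree $8n-12$ (or $8n-14$). That is not what the theorem gives. Here $X=G(2,2n)$ has $\dim_{\mathbb{C}}X=4n-4$ (not $4n-5$, which is $\dim_{\mathbb{C}}\mathrm{SpG}(2,2n)$; you also write $\dim_{\mathbb{C}}\mathrm{SpG}(2,2n)=4n-6$ at one point), so the actual isomorphism range is $i\le 4n-6$ with only a surjection at $i=4n-5$. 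Thus the proposition's bound $i\le 4n-5$ is not ``certainly covered with room to spare'' --- it is exactly at the edge, and the case $i=4n-5$ requires the extra observation that $4n-5$ is odd and both varieties have cohomology only in even degrees, so $H_{4n-5}$ vanishes on both sides. This is the step the paper makes explicit and your proposal omits; once you use the correct LHT bound it is forced, and it also makes your worry about $n=2$ unnecessary. In the Poincar\'e duality step both real dimensions are off by two ($\mathrm{SpG}(2,2n)$ has real dimension $8n-10$, not $8n-12$; $G(2,2n)$ has $8n-8$, not $8n-10$); these errors happen to cancel in the final degree shift, but that is luck, not an argument. Finally, the map $H^{j}(G)\to H^{j}(\mathrm{SpG})$ that is an isomorphism for $j\le 4n-6$ is the restriction map, not cup product with $[\mathcal H]$ (which is a map $H^j(G)\to H^{j+2}(G)$); your description of $[\Sigma_j\cap\mathcal H]$ as the Poincar\'e dual of the restricted class is correct in spirit but should be phrased in these terms. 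With the dimension arithmetic fixed, the LHT bound applied correctly, and the odd-degree vanishing at $i=4n-5$ noted, your proof coincides in substance with the paper's.
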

\begin{proof}
We have that  $\text{Sp}_{2n}/P$ is a smooth complete intersection of complex dimension $2(2n-2)-1=4n-5$. 
By Lefschetz hyperplane section  theorem, 
\begin{equation}\label{eq3.5}
H^i(\text{Sp}_{2n}/P,\mathbb{Q})\simeq H^i(\text{GL}_{2n}/P',\mathbb{Q})\quad\text{for }\; i<4n-5 
\end{equation}
and for $k=4n-5$ both sides are trivial. After that, we can apply Poincaré duality to compute its 
remaining homology groups.
\end{proof}

\begin{example}
We have that $H_i(\text{SpG}(2,4),\mathbb{Z})=\mathbb{Z}$ for $i=0,2,4,6$ and zero otherwise. The generators are $[pt.]$ for $i=0$, $[\mathbb{P}^1]$ for $i=2$, $[H\cap Z(s)]$ for $i=4$, where $H\subset G(2,4)$ is a hyperplane  section, and $[\text{SpG}(2,4)]=[G(2,4)\cap Z(s)]$ for $i=6$.
\end{example}

 In general, we have explicit generators of $H_i(\text{SpG}(2,2n),\mathbb{Z})$ using \cite[Theorem 5.4]{EFG}. The homology classes of the of the  Schubert varieties $[\Sigma]=[\Sigma_{\{i,j\}}]$ in $H_*(G(2,2n),\mathbb{Z})$ are indexed by elements $\{i,j\}\in\binom{[2n]}{2}$, where $\Sigma_{\{i,j\}}\subset G(2,2n)$ can be constructed by considering the poset $(\binom{[2n]}{2},\leq)$ and the order is induced by the 
canonical admissible order $\leq$ on $[2n]$. Then the ideal of  $\Sigma_{\{i,j\}}$ in $G(2,2n)$ is generated by $\{x_{\{k,l\}}\::\:\{k,l\}\in\binom{[2n]}{2},\text{ and }\{k,l\}\nleq \{i,j\}\}$.

Another way to say this is that if $x\in \Sigma_{\{i,j\}}$ is generic, then the set of bases ${\mathcal B}'(x)$ of the matroid associated to $x$ (see Section \ref{subsec2.2}) becomes ${\mathcal B}'(x)=\{\{k,l\}\in\binom{[2n]}{2}\::\:\{k,l\}\leq \{i,j\}\}$. Recall that if we define $r:\binom{[2n]}{2}\xrightarrow[]{}\mathbb{N}$ as $r(\{i,j\})=i+j-3$, then $\text{dim}_{\mathbb{C}}(\Sigma_{\{i,j\}})=r(\{i,j\})$, and $(\binom{[2n]}{2},\leq,r)$ becomes a ranked poset. As for the classical case, we have equations for the Schubert varieties in the symplectic case.

\begin{proposition}
For any $\{i,j\}$ in  the poset $(J_n^2,\leq)$, we define 
$${\rm Sp}\Sigma_{\{i,j\}}=Z(\{x_{\{k,l\}}\::\:\{k,l\}\in J_n^2,\text{ and } \{k,l\}\nleq \{i,j\}\})\subset \text{\rm SpG}(2,2n).$$
These are the Schubert varieties whose homology classes $[{\rm Sp}\Sigma]=[{\rm Sp}\Sigma_{\{i,j\}}]$ generate $H_*(\text{\rm SpG}(2,2n),\mathbb{Z})$.
\end{proposition}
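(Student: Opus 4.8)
The plan is to establish the two claims of the proposition — that $\mathrm{Sp}\Sigma_{\{i,j\}}$ is well-defined (i.e.\ the indicated vanishing locus agrees with a Schubert variety of $\mathrm{SpG}(2,2n)$) and that the classes $[\mathrm{Sp}\Sigma_{\{i,j\}}]$ generate $H_*(\mathrm{SpG}(2,2n),\mathbb{Z})$ — by pulling everything back from the ambient Grassmannian $G(2,2n)$ along the hyperplane section. First I would use the identification $\mathrm{SpG}(2,2n) = G(2,2n)\cap\mathcal H$ with $\mathcal H = \{s = 0\}$ for the symplectic relation \eqref{symplectic_rel}, together with the description in \S\ref{subsec3.1} (items (1)--(3)) of the Schubert cells of $\mathrm{Sp}_{2n}/P$ as the restrictions $B\overline x = (B'[x])\cap\mathrm{Sp}_{2n}/P$ of the Schubert cells of $\mathrm{GL}_{2n}/P'$, the intersection being irreducible. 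Intersecting the known equations of $\Sigma_{\{i,j\}}\subset G(2,2n)$ — namely the vanishing of $x_{\{k,l\}}$ for $\{k,l\}\nleq\{i,j\}$ in $(\binom{[2n]}{2},\leq)$ — with $\mathcal H$ shows that $\Sigma_{\{i,j\}}\cap\mathcal H$ is cut out inside $\mathrm{SpG}(2,2n)$ precisely by the coordinates $x_{\{k,l\}}$ with $\{k,l\}\in J_n^2$ and $\{k,l\}\nleq\{i,j\}$: indeed the index set $J_n^2$ is exactly $\binom{[2n]}{2}$ minus the antidiagonal pairs $\{i,i^*\}$, and one checks the restriction of the poset $(\binom{[2n]}{2},\leq)$ to $J_n^2$ is the Bruhat order used in Definition~\ref{Def_BC_nM}. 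This identifies $\mathrm{Sp}\Sigma_{\{i,j\}}$ with the Schubert variety $\Sigma_{\{i,j\}}\cap\mathcal H$ of $\mathrm{SpG}(2,2n)$, which by item (3) is irreducible.

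Next I would treat the homology statement. By Proposition~\ref{Prop_homology}, for $i\leq 4n-6$ the group $H_i(\mathrm{SpG}(2,2n),\mathbb{Z})$ is canonically identified with $H_i(G(2,2n),\mathbb{Z})$ via the Lefschetz theorem, and for $i\geq 4n-4$ it is identified with $H_{i+2}(G(2,2n),\mathbb{Z})$ with the identification realized geometrically by intersecting cycles with $\mathcal H$. On the $G(2,2n)$ side the Schubert varieties $\Sigma_{\{k,l\}}$, $\{k,l\}\in\binom{[2n]}{2}$, form an additive basis in each degree (they are the Bialynicki-Birula cells), with $\dim_{\mathbb C}\Sigma_{\{k,l\}} = r(\{k,l\}) = k+l-3$, and a generator of $H_{2d}(G(2,2n),\mathbb{Z})$ in degree $d$ is furnished by the $\Sigma_{\{k,l\}}$ with $k+l-3 = d$. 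For $d\leq 2n-3$ (so $2d\leq 4n-6$) one checks every such pair $\{k,l\}$ lies in $J_n^2$ (the first time an antidiagonal pair $\{i,i^*\}$ appears is at rank $r(\{i,i^*\}) = i + (2n-i+1) - 3 = 2n-2$), so $\mathrm{Sp}\Sigma_{\{k,l\}} = \Sigma_{\{k,l\}}$ already maps to the generator; for $d\geq 2n-2$ (so $2d\geq 4n-4$) Proposition~\ref{Prop_homology} says $\{[\Sigma_j\cap\mathcal H]\}_j$ generates, and $\Sigma_{\{k,l\}}\cap\mathcal H = \mathrm{Sp}\Sigma_{\{k,l\}}$ whenever $\{k,l\}\in J_n^2$, while the pairs $\{i,i^*\}\notin J_n^2$ contribute nothing new — indeed the hyperplane relation $s = \sum x_{i,i^*}$ shows that on $\mathrm{SpG}(2,2n)$ the corresponding coordinate is expressible through the others, so $\Sigma_{\{i,i^*\}}\cap\mathcal H$ is homologous to a combination of the $\mathrm{Sp}\Sigma$'s of equal or smaller rank. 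Collecting these observations over all degrees gives the claimed generation.

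The main obstacle I anticipate is the degree-$(4n-4)$ borderline and a careful bookkeeping of which Schubert classes survive the hyperplane section and whether the excluded antidiagonal indices $\{i,i^*\}$ force any redundancy or, conversely, any genuine loss of a generator. Concretely one must verify that in the critical range the set $\{\,\Sigma_{\{k,l\}}\cap\mathcal H : \{k,l\}\in J_n^2,\ r(\{k,l\}) = d\,\}$ still spans $H_{2d}(\mathrm{SpG}(2,2n),\mathbb{Z})$ — this is where the precise content of Proposition~\ref{Prop_homology} and \cite[Theorem 5.4]{EFG} is used, and where one invokes that a hyperplane section of a Schubert variety transverse to it either drops dimension by one with class the Lefschetz pullback, or (for the top cells) becomes reducible but with each component already of the form $\mathrm{Sp}\Sigma$. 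Everything else is a routine translation between the poset $(\binom{[2n]}{2},\leq,r)$ and its restriction to $J_n^2$, and between Plücker vanishing ideals on $G(2,2n)$ and their images on the hyperplane section $\mathcal H$.
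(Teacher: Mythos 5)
Your proposal reaches the right conclusion but via a different and somewhat more laborious route than the paper. The paper's own proof is a one-liner: it invokes Equation \eqref{eq3.1.0} (Schubert cells of $\text{Sp}_{2n}/P$ are the intersections of Schubert cells of $\text{GL}_{2n}/P'$ with $\text{Sp}_{2n}/P$), together with the fact that $(J_n^2,\leq)$ is a sub-poset of $(\binom{[2n]}{2},\leq)$, and then lets the standard Bruhat (CW) decomposition of the homogeneous space $\text{SpG}(2,2n)=\text{Sp}_{2n}/P$ do the rest — since $H_*$ of a variety with a Schubert cell decomposition is freely generated by the closures of the cells, generation is automatic. Your proof agrees with this in the first paragraph (identifying $\text{Sp}\Sigma_{\{i,j\}}$ with $\Sigma_{\{i,j\}}\cap\mathcal H$ via \eqref{eq3.1.0}), but then replaces the CW argument by a transport of generators across the Lefschetz isomorphisms of Proposition~\ref{Prop_homology}. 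Both routes work, but the paper's is shorter because it avoids any degree-by-degree bookkeeping.

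One concrete inefficiency in your argument: the worry about the antidiagonal pairs $\{i,i^*\}$ in the high range is a phantom. The antidiagonal Schubert classes live exclusively in $H_{4n-4}(G(2,2n))$ (complex dimension $r(\{i,i^*\})=2n-2$), and the Lefschetz pairing in Proposition~\ref{Prop_homology} matches $H_j(\text{SpG})$ with $H_j(G)$ for $j\leq 4n-6$ (there one needs $j/2\leq 2n-3<2n-2$) and with $H_{j+2}(G)$ for $j\geq 4n-4$ (there one needs $(j+2)/2\geq 2n-1>2n-2$). So the degree $4n-4$ on the $G$ side, which is exactly where the antidiagonal classes sit, is skipped entirely and they never enter the generating set. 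Your attempted patch — arguing from the relation $s=\sum x_{i,i^*}$ that $\Sigma_{\{i,i^*\}}\cap\mathcal H$ is ``homologous to a combination of the $\text{Sp}\Sigma$'s'' — is not a rigorous homological statement as written and, more to the point, is unnecessary once one notices the degree gap. If you wish to keep the Lefschetz route, replace that paragraph by the simple observation that all generators of $H_{j+2}(G)$ for $j\geq 4n-4$ are indexed by pairs $\{k,l\}$ with $k+l=j/2+4\geq 2n+2\neq 2n+1$, hence lie in $J_n^2$; and, as already noted, all generators of $H_j(G)$ for $j\leq 4n-6$ are indexed by pairs with $k+l\leq 2n<2n+1$, again in $J_n^2$ — and for these $\Sigma_{\{k,l\}}\subset\mathcal H$ so $\text{Sp}\Sigma_{\{k,l\}}=\Sigma_{\{k,l\}}$.
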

\begin{proof}
Using that  $\text{SpG}(2,2n)=G(2,2n)\cap {\mathcal H}$  and the fact that $(J_n^2,\leq)$ is a sub-poset of $(\binom{[2n]}{2},\leq)$, the result follows at once from Equation \eqref{eq3.1.0}.
\end{proof}

\begin{proposition}
Let $\{i,j\}\in J_n^2$, then 
\begin{equation}
    \dim_{\mathbb{C}}({\rm Sp}\Sigma_{\{i,j\}})=\begin{cases}
    p(i)+p(j)-3,&\text{if }p(i)+p(j)<2n+1\\
    p(i)+p(j)-4,&\text{if }p(i)+p(j)>2n+1,
    \end{cases}
 \end{equation}   
 where $p:E_n\rightarrow [2n]$ is the bijection of Section  {\rm \ref{subsec2.4}}.

\end{proposition}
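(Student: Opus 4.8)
The plan is to reduce the computation to the already-known dimension of the type-A Schubert variety $\Sigma_{\{i,j\}}\subset G(2,2n)$, for which $\dim_{\mathbb{C}}(\Sigma_{\{i,j\}}) = r(\{i,j\}) = p(i)+p(j)-3$, and then to account for the effect of intersecting with the hyperplane $\mathcal{H}=\{s=0\}$, where $s=\sum_{i=1}^n x_{i,i^*}$. By the previous proposition, ${\rm Sp}\Sigma_{\{i,j\}} = \Sigma_{\{i,j\}}\cap\mathcal{H}$ (using Equation \eqref{eq3.1.0} and that $(J_n^2,\leq)$ is a subposet of $(\binom{[2n]}{2},\leq)$), so the question is whether the hyperplane $\mathcal{H}$ cuts $\Sigma_{\{i,j\}}$ properly (dropping the dimension by one) or contains it entirely (dimension unchanged).

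First I would examine which Plücker coordinates $x_{k,k^*}$ appearing in $s$ are \emph{not} identically zero on $\Sigma_{\{i,j\}}$: by the defining equations of $\Sigma_{\{i,j\}}$, the coordinate $x_{\{k,l\}}$ vanishes identically on $\Sigma_{\{i,j\}}$ precisely when $\{k,l\}\nleq\{i,j\}$ in $(\binom{[2n]}{2},\leq)$, i.e.\ unless $p(k),p(l)\le$ the two coordinates of $\{i,j\}$ in the obvious pairing. Concretely, writing $a=p(i)$, $b=p(j)$ with $a<b$, the antidiagonal pair $\{k,k^*\}$ (which has Plücker indices $p(k)$ and $2n+1-p(k)$) satisfies $\{k,k^*\}\le\{i,j\}$ iff $p(k)\le a$ and $2n+1-p(k)\le b$, i.e.\ iff $2n+1-b\le p(k)\le a$. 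This range is nonempty exactly when $a+b\ge 2n+1$. Hence:

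\emph{Case $p(i)+p(j)<2n+1$:} every monomial $x_{k,k^*}$ of $s$ restricts to zero on $\Sigma_{\{i,j\}}$, so $s|_{\Sigma_{\{i,j\}}}\equiv 0$, meaning $\Sigma_{\{i,j\}}\subseteq\mathcal{H}$ and therefore ${\rm Sp}\Sigma_{\{i,j\}}=\Sigma_{\{i,j\}}$ has dimension $p(i)+p(j)-3$.

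\emph{Case $p(i)+p(j)>2n+1$:} at least one coordinate $x_{k,k^*}$ (with $2n+1-b\le p(k)\le a$) does not vanish identically on $\Sigma_{\{i,j\}}$, so $s|_{\Sigma_{\{i,j\}}}$ is a nonzero linear form; since $\Sigma_{\{i,j\}}$ is irreducible, the intersection with $\mathcal{H}$ is a proper hypersurface and $\dim_{\mathbb{C}}({\rm Sp}\Sigma_{\{i,j\}})=p(i)+p(j)-4$. (The case $p(i)+p(j)=2n+1$ does not occur among admissible pairs $\{i,j\}\in J_n^2$, since $j\neq i^*$ forces $p(i)+p(j)\neq 2n+1$.) The main obstacle is making the irreducibility-plus-properness step rigorous: one must confirm that ${\rm Sp}\Sigma_{\{i,j\}}$ is nonempty (so it is a genuine hypersurface of $\Sigma_{\{i,j\}}$ and not empty) — this follows because ${\rm Sp}\Sigma_{\{i,j\}}$ is itself a Schubert variety of $\text{SpG}(2,2n)$ hence nonempty, or alternatively one exhibits an explicit isotropic line in the cell — and that no component of $\Sigma_{\{i,j\}}$ lies inside $\mathcal{H}$, which is immediate from irreducibility of $\Sigma_{\{i,j\}}$ together with the non-vanishing of $s$ on it established above.
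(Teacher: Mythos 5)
Your approach is genuinely different from the paper's. The paper deduces the dimension formula from Proposition~\ref{Prop_homology}, i.e.\ from the Lefschetz hyperplane section theorem applied to $\mathcal H\cap G(2,2n)$: in degrees $\le 4n-6$ the homology generators of $\text{SpG}(2,2n)$ are literally the same as those of $G(2,2n)$ (so the Schubert variety is contained in $\mathcal H$ and dimension is unchanged), while in degrees $\ge 4n-4$ the generators are the hyperplane sections $\Sigma\cap\mathcal H$ (so dimension drops by one). You instead argue directly from the combinatorics of the Bruhat order: you characterize exactly which diagonal coordinates $x_{k,k^*}$ do not vanish identically on $\Sigma_{\{i,j\}}$, show that the admissibility condition $j\ne i^*$ cleanly splits the two cases at the threshold $p(i)+p(j)=2n+1$, and conclude that $\Sigma_{\{i,j\}}$ either sits inside $\mathcal H$ or is cut properly by it. Your argument is more elementary and more explicit — it tells you exactly \emph{which} type-A Schubert varieties are contained in $\mathcal H$, information that the Lefschetz argument does not exhibit — and is arguably preferable pedagogically.

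There is, however, a small gap in the case $p(i)+p(j)>2n+1$: you pass from ``at least one monomial $x_{k,k^*}$ of $s$ is not identically zero on $\Sigma_{\{i,j\}}$'' to ``$s|_{\Sigma_{\{i,j\}}}$ is nonzero,'' but a priori the surviving terms of $s$ could cancel on $\Sigma_{\{i,j\}}$. To close this, observe that the ideal of $\Sigma_{\{i,j\}}$ in $\mathbb P^N$ is generated by the coordinates $x_{\{k,l\}}$ with $\{k,l\}\nleq\{i,j\}$ together with the (quadratic) Pl\"ucker relations; in particular the only linear forms vanishing on $\Sigma_{\{i,j\}}$ are linear combinations of those excluded coordinates, so $s$ vanishes identically iff \emph{every} $\{k,k^*\}$ appearing in $s$ satisfies $\{k,k^*\}\nleq\{i,j\}$. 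With that one-line justification your argument is complete, and the rest (nonemptiness of ${\rm Sp}\Sigma_{\{i,j\}}$, irreducibility of $\Sigma_{\{i,j\}}$ ruling out a contained component) is handled correctly.
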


\begin{proof}
Given $\{i,j\}\in\binom{[2n]}{2}$ with the canonical order, we know that $\text{dim}_{\mathbb{C}}(\Sigma_{\{i,j\}})=i+j-3$. Also, given $\{i,j\}\in J_n^2$ we know by Proposition \ref{Prop_homology}  that 
\begin{equation}
   \dim_{\mathbb{C}}({\rm Sp}\Sigma_{\{i,j\}})=\begin{cases}
   \dim_{\mathbb{C}}(\Sigma_{\{p(i),p(j)\}}),&\text{if }p(i)+p(j)<2n+1\\
   \dim_{\mathbb{C}}(\Sigma_{\{p(i),p(j)\}})-1,&\text{if }p(i)+p(j)>2n+1
    \end{cases}
\end{equation}
which ends the proof.
\end{proof}

\subsection{The Symplectic Grassmannian as a $T$-variety}
\label{sect_3.4}
For the maximal torus $T'$ of diagonal invertible matrices in the group $\text{GL}_{2n}({\mathbb C})$ and the involution $\sigma:\text{GL}_{2n}({\mathbb C})\rightarrow \text{GL}_{2n}({\mathbb C})$ of Section \ref{subsec3.1}, the fixed-point subgroup $T=T'^{\sigma}\subseteq T'$ is a maximal $n$-dimensional torus in $\text{Sp}_{2n}$ consisting of diagonal invertible matrices of the form
$\text{diag}(\mu_1,\ldots,\mu_n,\mu_n^{-1},\ldots, \mu_1^{-1})$. The natural action of $2n$-torus $T'=({\mathbb C}^*)^{2n}$ on the Grassmannian $G(2,2n)$ of Section \ref{subsec2.1} restricts to an action of the symplectic $n$-torus $T$ on the symplectic Grassmannian.
Explicitly, if  $x\in G(2,2n)$ we consider a matrix $A$ of size $2n\times 2$ representing $x$. Writing 
$$t=\text{diag}(t_1,\ldots,t_{2n})=\text{diag}(\mu_1,\ldots,\mu_n,\mu_{n^*},\ldots,\mu_{1^*}),$$
where $\mu_{i^*}=\mu_i^{-1}$ for $1\leq i\leq n$,
then
\begin{equation}
\label{equa_action_Tn}
  t\cdot A= \text{diag}(\mu_1,\ldots,\mu_n,\mu_{n^*},\ldots,\mu_{1^*})\cdot A.
\end{equation}
In terms of Pl\"ucker coordinates,  the action \eqref{equa_action_Tn} becomes
\begin{equation}\label{torus_action}
    t\cdot[x_{i,j}]=
    [t_it_jx_{i,j}],\;
    \text{where } t_i=\begin{cases}\mu_i  & \text{for }  i\in [n],\\  
    \mu_{i^*}   & \text{for }  i\in [n^*].\end{cases}
\end{equation}
It is easy to check that
 the $T$-fixed points of $\text{SpG}(2,2n)$ have the form $[0,\ldots,1,\ldots, 0]$ in $\text{SpG}(2,2n)$. Explicitely, by  Equation
 \eqref{symplectic_rel}, these are the
$2n(n-1)$ points $[p_{i,j}]$ having Pl\"ucker coordinate $1$ at $x_{i,j}$ with $\{i,j\}\in J_2^n$.

The inclusion map  $i:T\hookrightarrow T'$ of tori  splits by the map $j:T'\rightarrow T$  given by $j(\text{diag}(\lambda_1,\ldots,\lambda_{2n})=\text{diag}(\lambda_1,\ldots,\lambda_n,\lambda_n^{-1},\ldots,\lambda_1^{-1})$. 

{From now on, we will use freely the bijection $p:E_n\rightarrow [2n]$ from Section  {\rm \ref{subsec2.4}}.}

Given  $x\in \text{SpG}(2,2n)$ with coordinates $[x_{i,j}]\in \mathbb{P}^{N}$, we define by  ${\mathcal B}(x)$ the set $\{\{i,j\}\in\binom{[E_n]}{2}\::\:x_{i,j}\neq0,\: \{i,j\}\in J_2^n\}$, which is the analogous to ${\mathcal B}'(x)$  in Section \ref{subsec2.2}. Let $\mu_T: \text{SpG}(2,2n)\to \mathbb{R}^{n}$ be the corresponding moment map. If $\{\phi_i\::\:1\leq i\leq n\}$ is the canonical basis of $\mathbb{R}^{2n}$, we set $\varepsilon_i=\phi_i$ if $i\in [n]$ and $\varepsilon_i=-\phi_i$ if $i\in [n^*]$.

Now, $\overline{T_{}x}$ is a toric variety of dimension $0\leq d\leq n$ which by Equation \eqref{equa_action_Tn} satisfies

\begin{equation}
\label{equation_moment_sym}
    \mu_T(\overline{T_{}x})=\text{Conv}\{\phi_{i,j}=\varepsilon_i+\varepsilon_j\::\:\{i,j\}\in  {\mathcal B}(x)\}.
\end{equation}

By the theory of Gelfan'd-Serganova, the polytope \eqref{equation_moment_sym} is a Coxeter matroid of rank 2 on $E_n$ corresponding to {the hyperoctahedral group} $BC_n$. 

 In particular, if $x\in \text{SpG}(2,2n)$ is a generic point, i.e., one which satisfies $x_{i,j}\neq0$ for all $\{i,j\}\in\binom{[2n]}{2}$, then $\mu_T(\overline{T_{n}x})=\mu_T(\text{SpG}(2,2n))$   is the  polytope 

\begin{equation}
    \text{Sp}\Delta(2,2n)=\text{Conv}\{\varepsilon_{i,j}\::\:\{i,j\}\in J_n^2\}\subset\mathbb{R}^n.
\end{equation}
which is a lattice polytope of maximal dimension $n$.

\begin{definition}
\label{bcnmatroid}
 If $x\in \text{SpG}(2,2n)$, we call either the subpolytope $\mu_T(\overline{Tx})\subset \text{Sp}\Delta(2,2n)$ or the triple {$(E_n,*,\mathcal{B}(x))$} the $BC_n$-matroid of $x$, and we denote it by $M(x)$.
\end{definition}

\begin{example}[The case $n=2$]
\label{example3.3}

We have $1<2< 2^*< 1^*$, and the set $\text{Sp}\Delta(2,4)$ analogue of the octahedron for the case of $G(2,4)$ is the square:

\begin{center}
\begin{tikzpicture}[scale=1]	 
	\draw[dashed,<->] (-2,0) -- (2,0) ; 
	\draw[dashed,<->] (0,-1.5) -- (0,1.5) ; 
	\draw[thick,-] (1,1)-- (-1,1) node[anchor= east]{$x_{1^*2}$};
	\draw[thick,-] (-1,1)-- (-1,-1) node[anchor= east]{$x_{1^*2^*}$};
         \draw[thick,-] (-1,-1) -- (1,-1)   node[anchor= west]{$x_{12^*}$};
	\draw[thick,-]  (1,-1) -- (1,1)   node[anchor= west]{$x_{12}$};	
\end{tikzpicture}
\end{center}

The set of $T$-fixed points is $\{x_{12},x_{1^*2}, x_{12^*}, x_{1^*2^*}\}$, and we may have either 1,2,3 or 4 bases, and in total we have 15 possibilities. Compare with Example \ref{ex:iso_class}.

\begin{table}[!htb]
    \centering
    \begin{tabular}{c|c}
     number of bases& number of $BC_n$ matroids \\
     1&4\\
     2&6 \\
     3&4 \\
     4&1\\
\end{tabular}
    \caption{For $n=2$, there are fifteen $BC_n$-matroids.}
    \label{table3.1}
\end{table}
\end{example}

As in the case for $G(2,2n)$, let $M^2_{BC_{n}}({\mathbb C})$ be the set of $BC_{n}$-matroids of rank 2 on $E_n$ which are representable over ${\mathbb C}$. Contrary to the $S_n$ case,  not all $BC_n$ matroids are representable over ${\mathbb C}$, as we pointed above. 

\begin{definition}
For $N\in M^2_{BC_{n}}(\mathbb{C})$, we denote by 
$$\text{SpG}_N=\{x\in \text{SpG}(2,2n)\::\:M(x)=N\}.$$ The set $\text{SpG}_N$ is non-empty, and it is called the {\it symplectic thin Schubert cell} of $\text{SpG}(2,2n)$ associated to $N$.
\end{definition}

\begin{lemma}\label{lemma3.7}
If $M$ is a type A matroid representable over $\mathbb{C}$, then the stabilizer of any element, in the thin Schubert cell that it defines under the action of the torus $T'$, only depends on $M$.
\end{lemma}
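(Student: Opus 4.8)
The plan is to unwind what it means for a point $x$ in a thin Schubert cell $G_M \subseteq G(2,2n)$ to be fixed (mod scaling) by a one-parameter subgroup of $T'$, and to show this condition is detected purely by the combinatorial data $\mathcal B'(x) = B(M)$. First I would recall that the torus $T'=({\mathbb C}^*)^{2n}$ acts on Pl\"ucker coordinates by $t\cdot [x_{i,j}] = [t_it_j x_{i,j}]$, so an element $t = \diag(t_1,\dots,t_{2n})\in T'$ fixes the point $x\in G(2,2n)$ (as a point of projective space, hence of $G(2,2n)$) if and only if there is a scalar $c\in{\mathbb C}^*$ with $t_it_j = c$ for every $\{i,j\}\in \mathcal B'(x)$. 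Thus the stabilizer $\stab_{T'}(x)$ is the subgroup of $T'$ cut out by the equations $t_it_j = t_kt_l$ for all pairs $\{i,j\},\{k,l\}\in\mathcal B'(x)$ (together with allowing an overall common value $c$, which is automatically consistent once these equalities hold). This is manifestly a closed subgroup of $T'$ depending only on the set $\mathcal B'(x)$.

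Next I would invoke the fact, recalled in Section~\ref{subsec2.2}, that for $x\in G_M$ one has $\mathcal B'(x) = B(M)$, the base set of the matroid $M$ — indeed this is the defining property of the thin Schubert cell $G_M = \{x : \mu_{T'}(\overline{T'x}) = M\}$, together with the reconstruction $\mathcal B'(x) = \mu_{T'}(\overline{T'x})\cap {\mathbb Z}^{2n}$. Combining the two observations: for every $x\in G_M$,
\[
\stab_{T'}(x) = \{\, t=\diag(t_1,\dots,t_{2n})\in T' \;:\; t_it_j = t_kt_l \text{ for all } \{i,j\},\{k,l\}\in B(M)\,\},
\]
and the right-hand side depends only on $M$, not on the chosen representative $x$. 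This proves the claim.

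I expect the only subtle point — and hence the "main obstacle," though it is mild — is the passage from "$t$ fixes the image of $x$ in ${\mathbb P}^N$" to "$t$ fixes $x$ as a point of $G(2,2n)$": since $G(2,2n)$ is a subvariety of ${\mathbb P}^N$ and the $T'$-action on ${\mathbb P}^N$ restricts to the one on $G(2,2n)$, a point of $G(2,2n)$ is $T'$-fixed exactly when its Pl\"ucker point is, so there is nothing further to check. One should also note that the equations $t_it_j = t_kt_l$ already encode consistency of the common value $c$ across overlapping pairs, so no separate closure or connectedness argument is needed; the stabilizer is simply the kernel of the homomorphism $T'\to ({\mathbb C}^*)^{B(M)\times B(M)}$ sending $t$ to the tuple of ratios $(t_it_j)/(t_kt_l)$, which is visibly a function of $M$ alone. (For later use one can record that, up to the diagonal ${\mathbb C}^*$ acting trivially on ${\mathbb P}^N$, this is the subtorus $T_M$ appearing in the bundle \eqref{eq1.1}.)
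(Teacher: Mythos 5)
Your proposal is correct and follows essentially the same route as the paper's proof: both observe that a diagonal $t\in T'$ fixes $x\in G_M$ if and only if $t_it_j$ takes a single common value $\lambda$ over all bases $\{i,j\}\in B(M)$, and that this condition depends only on $B(M)$, not on the chosen representative $x$.
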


\begin{proof}  
Since the only Plücker coordinates different from zero for the elements in $G_M$  are those corresponding to $\{i,j\}\in M$, all coordinates in $T'$ that do not involve $\{i,j\}\in M$ can be chosen arbitrary and will have no effect on the points in $G_M$. On the other hand, since the action of a diagonal matrix $t$ on $G_M$ correspond to multiplication of each non zero coordinate $x_{ij}$ by $t_i\,t_j$, then $t$ stabilizes a point in
$G_M$ if and only if there exist a unique $\lambda\in\mathbb C^*$ such that
\[
t_i\,t_j = \lambda\quad\quad\text{ for all}\quad \{i,j\}\in M.
\]
Since this condition does not depend on the chosen point but only on $M$, the stabilizer  only depends on $M$ as well and we will denote it simply by $\stab_{T'}(M)$.
\end{proof}

\begin{remark}
If $M$ is a  type BC  matroid of rank 2 representable over $\mathbb{C}$,
Lemma \ref{lemma3.7} may be false as shown in Example \ref{ejemplo:stab}. However, there is a slightly weaker result which is good enough for our purposes, see Lemma \ref{simplecticStab}.
\end{remark}

\begin{lemma} \label{lem:vacuum}
Let $M=(E_n,B)$ be a matroid, $G_M\subset G(2,2n)$ be the thin Schubert cell associated to it and let $\mathcal{H}=Z(s)$ for $s$ as in \eqref{symplectic_rel}.
\begin{enumerate}[label=\emph{(\arabic*)}]
\item If there is no pair $\{j,j^*\}\in B$, then $G_M\subset \mathcal{H}$.
\item If there is only one pair $\{j,j^*\}\in B$, then $G_M\cap \mathcal{H} = \emptyset$.
\item If there are at least two pairs of the form $\{j,j^*\}\in B$, then $G_M\not\subset \mathcal{H}$, 
$G_M\cap \mathcal{H}\ne\emptyset$ and for any $x\in G_M$, we have that $T'x\cap \mathcal{H}$ is a hypersurface on $T'x$ and therefore it is irreducible.
\end{enumerate}
\end{lemma}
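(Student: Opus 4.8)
The statement concerns the thin Schubert cell $G_M \subset G(2,2n)$ of a matroid $M = (E_n, B)$, and how it interacts with the hyperplane $\mathcal{H} = Z(s)$, $s = \sum_{i=1}^n x_{i,i^*}$. The key observation is that on $G_M$ the only nonzero Plücker coordinates are those indexed by elements of $B$, so that the restriction of $s$ to $G_M$ is literally $s|_{G_M} = \sum_{\{i,i^*\} \in B} x_{i,i^*}$, a sum running only over the pairs of "complementary" type in $B$. I would organize the proof around counting how many such pairs $\{j,j^*\}$ occur in $B$, exactly as the three cases of the statement dictate.

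**Cases (1) and (2).** For (1), if no pair $\{j,j^*\}$ lies in $B$, then $s|_{G_M}$ is an empty sum, hence identically zero, so $G_M \subseteq \mathcal{H}$ — immediate. For (2), if there is exactly one pair $\{j,j^*\} \in B$, then $s|_{G_M} = x_{j,j^*}$; but by definition of the thin Schubert cell, every $x \in G_M$ has $x_{j,j^*} \neq 0$ for each $\{j,j^*\} \in B$, so $s$ never vanishes on $G_M$, giving $G_M \cap \mathcal{H} = \emptyset$. Both cases are essentially formal once one has the description of which coordinates vanish on $G_M$.

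**Case (3): the main obstacle.** Suppose $B$ contains at least two pairs $\{j_1,j_1^*\}, \{j_2,j_2^*\}$. Then $s|_{G_M} = x_{j_1,j_1^*} + x_{j_2,j_2^*} + \cdots$ is a nonconstant function on $G_M$ (the two displayed summands are distinct coordinates, both generically nonzero), so $G_M \not\subseteq \mathcal{H}$; and since $G_M$ is irreducible and $s|_{G_M}$ is a nonzero regular function, its zero locus $G_M \cap \mathcal{H}$ is a nonempty hypersurface, in particular nonempty. The substantive part is the last claim: for $x \in G_M$, the intersection $T'x \cap \mathcal{H}$ is an irreducible hypersurface in the torus orbit $\overline{T'x}$ (or in $T'x$). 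Here I would use that $T'x$ is a torus, so $\overline{T'x}$ is a normal toric variety whose homogeneous coordinate ring is a polynomial ring, and that the restriction of each Plücker monomial $x_{i,j}$ to $T'x$ is (up to scalar) the character $t \mapsto t_i t_j$. Thus $s|_{T'x}$ becomes a Laurent polynomial — a sum of at least two distinct monomials $t_{j_1}t_{j_1^*} + t_{j_2}t_{j_2^*} + \cdots$ in the coordinate ring of the torus. The crux is to show this Laurent polynomial is irreducible (equivalently, that its Newton polytope is not a nontrivial Minkowski sum and the polynomial is not a monomial times a proper power). Since the monomials $t_i t_j$ for distinct pairs $\{i,j\}$ correspond to distinct lattice points (the vertices $\varepsilon_i + \varepsilon_j$ of the matroid polytope), the Newton polytope of $s|_{T'x}$ is the convex hull of at least two of these vertices, and because all its lattice points are among the vertices of the matroid polytope — none is an interior lattice point or a midpoint — standard criteria (e.g. via the fact that a polynomial whose Newton polytope is a simplex or segment with no nontrivial lattice sub-structure, with generic-enough coefficients coming from an honest point of $G_M$, is irreducible) give irreducibility. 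The anticipated difficulty is precisely making this irreducibility argument robust: one must either invoke a clean combinatorial criterion for irreducibility of sparse Laurent polynomials, or argue geometrically that $\overline{T'x} \cap \mathcal{H}$ is a hyperplane section of a toric variety embedded by a very ample line bundle and is therefore connected (by Bertini/Lefschetz-type connectedness) plus reduced, hence irreducible. I would lean toward the geometric route: $\overline{T'x}$ is projectively normal in its Plücker embedding, $\mathcal{H}$ restricts to a hyperplane in that embedding, and a hyperplane section of an irreducible projectively normal variety of dimension $\geq 2$ is connected; combined with the computation that the section is a reduced Cartier divisor (it meets the dense torus in a smooth hypersurface since $s|_{T'x}$ has isolated critical points only where a coordinate vanishes), this yields irreducibility of $T'x \cap \mathcal{H}$.
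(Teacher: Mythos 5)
Your treatment of parts (1) and (2) coincides with the paper's: the restriction of $s$ to $G_M$ is the sum of the surviving coordinates $x_{j,j^*}$ with $\{j,j^*\}\in B$, which is the empty sum in case (1) and a single nonvanishing coordinate in case (2). That is exactly the paper's argument.

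For part (3) there is a genuine gap in your nonemptiness claim. You assert that because $G_M$ is irreducible and $s|_{G_M}$ is a nonzero regular function, its zero locus is a \emph{nonempty} hypersurface. But $G_M$ is only quasi-projective, and a nonzero (even nonconstant) regular function on an irreducible quasi-projective variety can be nowhere vanishing (e.g.\ $t$ on $\mathbb{C}^*$) -- indeed the whole content of case (2) is that $s$ is such a function there. Nonemptiness needs a real argument. The paper supplies one by staying inside a single torus orbit: fix $x\in G_M$ with Pl\"ucker coordinates $a_{i,j}$, and observe that $\lambda\cdot x\in\mathcal{H}$ if and only if $\sum_{\{j,j^*\}\in B} a_{j,j^*}\lambda_j\lambda_{j^*}=0$. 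This Laurent expression has at least two distinct monomials with nonzero coefficients, hence is not a unit in $\mathbb{C}[\lambda_i^{\pm 1}]$, hence vanishes somewhere on $T'$. Thus $T'x\cap\mathcal{H}\neq\emptyset$, and a fortiori $G_M\cap\mathcal{H}\neq\emptyset$. Your argument never actually produces a point; you would need to import the last step of your own orbit analysis backward to fill the hole.

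For irreducibility of $T'x\cap\mathcal{H}$ you and the paper diverge in method. The paper identifies $T'x\cap\mathcal{H}$ with the (quotient of the) hypersurface in $T'$ cut out by $\sum a_{j,j^*}\lambda_j\lambda_{j^*}$, whose terms are monomials in pairwise disjoint sets of variables with nonzero coefficients; such a Laurent polynomial is irreducible (after a monomial change of coordinates it is a linear form in one variable plus lower-rank quadric contributions, or more directly it is a nondegenerate quadratic form of rank $\geq 4$ restricted to the torus), and an irreducible hypersurface in $T'$ maps onto an irreducible subset of $T'x$. Your proposed route -- projective normality of $\overline{T'x}$, Lefschetz-type connectedness of a hyperplane section, reducedness, then passage from $\overline{T'x}\cap\mathcal{H}$ to the open part $T'x\cap\mathcal{H}$ -- is plausible but brings in heavier machinery, is not carried out, and has subsidiary obligations you do not discharge (one must check $\dim\overline{T'x}\geq 2$, which requires using the basis-exchange axiom to see that $B$ cannot consist solely of two disjoint pairs $\{j_1,j_1^*\},\{j_2,j_2^*\}$; one must verify that the hyperplane section is reduced; and one must check that no irreducible component of the boundary section swallows the dense torus part). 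The paper's direct Laurent-polynomial argument is simpler and avoids all of this, so I would recommend adopting it and, if you want full rigor, adding the one-line verification that a sum of at least two monomials with disjoint supports and nonzero coefficients is an irreducible Laurent polynomial.
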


\begin{proof} (1) By definition, the only coordinates that are different from zero for any point in $G_M$ are precisely those corresponding to pairs $\{i,j\}\in B$, therefore if no $\{j,j^*\}$ is in $B$, all corresponding coordinates in the equation $s$ are zero and so is their sum.

(2) By the same reason, if only one such coordinate is in $B$, that coordinate, and only that coordinate among the $x_{j,j^*}$ is different from zero, so the sum can not be zero and the intersection is empty.

(3) Finally, if there are at least two such pairs in $B$, then it is always possible to find a point in $G_M$ where their sum is zero, since the only restriction imposed by $M$ is that they are not zero each.

Now let $x=(a_{i,j})\in G_M$ and $\lambda=\diag(\lambda_j)\in T'$. Then $\lambda\cdot x\in G_M\cap \mathcal{H}$
if and only if
\[
\sum_{\{j,j^*\}\in M} \lambda_j\, \lambda_{j^*}a_{j,j^*} = 0.
\]
which defines a polynomial equation on the $\lambda_j$'s and the $\lambda_{j^*}$'s and therefore 
$T'x\cap \mathcal{H}$ is isomorphic to the hypersurface on $T'$ defined by this equation. In particular it is irreducible.\hskip0.5cm{} 
\end{proof}

\begin{lemma}
\label{lemma:existence_and_irred}
  If $G_M\cap \mathcal{H}$ is not empty, then the categorical quotient
${\mathcal G}_{M,{\mathcal H}}=\left(G_M\cap \mathcal{H}\right)/T_{M}$ exists and  is irreducible  
\end{lemma}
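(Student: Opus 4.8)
The plan is to build the quotient by recalling the torus bundle structure from earlier in the paper. By the torus bundle \eqref{eq1.1}, the thin Schubert cell $G_M$ carries a free action of a subtorus $T_M\subseteq T'$ (acting freely by Lemma \ref{lemma3.7}, since $\stab_{T'}(M)$ is constant on $G_M$), and the quotient $\mathcal{G}_M=G_M/T_M$ is a variety. So the first step is to check that $T_M$ preserves $G_M\cap\mathcal{H}$: since the equation $s$ in \eqref{symplectic_rel} is built from Plücker coordinates $x_{j,j^*}$ and $T'$ acts by scaling each $x_{i,j}$ by $t_it_j$, an element $t\in\stab_{T'}(M)$ scales every nonzero coordinate of any $x\in G_M$ by a common scalar $\lambda$, hence scales $s(x)$ by $\lambda$, so $s(x)=0$ is preserved. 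Thus $T_M$ acts freely on $G_M\cap\mathcal{H}$, and the geometric quotient $\mathcal{G}_{M,\mathcal{H}}=(G_M\cap\mathcal{H})/T_M$ exists as a variety by general principles for free torus actions (or descends from $\mathcal{G}_M$ as the image of the locally closed $T_M$-invariant subset $G_M\cap\mathcal{H}$).

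The second step is irreducibility. The cleanest route is to fix any point $x\in G_M\cap\mathcal{H}$ — which is possible by hypothesis, and by Lemma \ref{lem:vacuum}(3) forces $M$ to have at least two pairs $\{j,j^*\}$ — and to stratify $G_M\cap\mathcal{H}$ by its $T'$-orbits. Each orbit $T'x$ is a torus and $T'x\cap\mathcal{H}$ is an irreducible hypersurface in it by Lemma \ref{lem:vacuum}(3). Since every point of $G_M$ lies in some such orbit and $G_M$ itself is irreducible (being a thin Schubert cell, by the discussion around \eqref{eq1.0}–\eqref{eq1.1} and the irreducibility statement in \eqref{eq3.1.0}), I would argue that $G_M\cap\mathcal{H}$ is irreducible: it is the intersection of the irreducible $G_M$ with the hypersurface $\mathcal{H}$, and this intersection is not contained in any proper subvariety because on each dense family of orbits it cuts out an irreducible hypersurface of the expected codimension one. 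More carefully, $G_M$ is an open subset of an affine (or quasi-affine) variety and $s$ is a single equation that does not vanish identically on it (Lemma \ref{lem:vacuum}(3)), so $G_M\cap\mathcal{H}=Z(s|_{G_M})$ is a principal divisor; to get irreducibility one uses that $G_M$ is in fact isomorphic to a product $T_M\times \mathcal{G}_M$ with $\mathcal{G}_M$ irreducible, and $s$ is "linear" in the $T_M$-directions in a way that makes $Z(s)$ irreducible.

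Finally, irreducibility of the quotient follows from irreducibility of the total space: a geometric quotient of an irreducible variety by a (connected) torus is irreducible, since the quotient map is surjective and continuous in the Zariski topology, so $\mathcal{G}_{M,\mathcal{H}}$ is the image of the irreducible $G_M\cap\mathcal{H}$. Hence $\mathcal{G}_{M,\mathcal{H}}$ is an irreducible variety.

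The main obstacle I anticipate is the irreducibility of $G_M\cap\mathcal{H}$ itself. One really wants to use the trivial bundle structure $G_M\cong T_M\times\mathcal{G}_M$ and understand how the single equation $s$ restricts: if, after this trivialization, $s$ becomes a function that is a nonzero "linear combination with invertible coefficients" of distinct characters of $T_M$ over the ring of $\mathcal{G}_M$, then its zero set is a torsor-like irreducible hypersurface fibered over $\mathcal{G}_M$. Making this precise — identifying which characters of $T_M$ appear in $s$ (these correspond to the pairs $\{j,j^*\}\in M$, all lying in a single coset of the character lattice because $\stab_{T'}(M)$ is defined by $t_jt_{j^*}=\lambda$) and checking that the resulting equation is irreducible over the function field of $\mathcal{G}_M$ — is the technical heart of the argument, and everything else (existence of the quotient, descent of irreducibility) is formal.
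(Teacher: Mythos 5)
Your approach is essentially the same as the paper's: existence follows from Lemma \ref{lemma3.7} because $T_M$ acts freely, and irreducibility comes from the fibration $T_M\to G_M\to\mathcal G_M$ restricted to $G_M\cap\mathcal{H}$, with Lemma \ref{lem:vacuum}(3) supplying irreducibility of the orbit-wise fibers. The paper then concludes directly from ``surjective onto an irreducible base with irreducible fibers.''

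There is, however, one genuine slip. You write that the hypothesis $G_M\cap\mathcal{H}\neq\emptyset$, via Lemma \ref{lem:vacuum}(3), ``forces $M$ to have at least two pairs $\{j,j^*\}$.'' That is false: Lemma \ref{lem:vacuum} has three cases, and $G_M\cap\mathcal{H}\neq\emptyset$ is also satisfied in case (1), where $M$ contains \emph{no} pair $\{j,j^*\}$ and $G_M\subset\mathcal{H}$. In that case your hypersurface argument does not apply (there is no nontrivial equation $s|_{T'x}$ to speak of, and the fibers of $\pi$ over $\mathcal G_M$ are full tori, not hypersurfaces), so your proof as written fails to cover it. The fix is trivial and is exactly what the paper does: split into the two nonempty cases of Lemma \ref{lem:vacuum}; if $G_M\subset\mathcal{H}$ then $G_M\cap\mathcal{H}=G_M$ is already irreducible, and only if there are at least two pairs does one need the fibration-with-hypersurface-fibers argument. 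Beyond that, your hesitation about passing from ``irreducible base, irreducible fibers'' to ``irreducible total space'' is reasonable caution, but the product trivialization $G_M\cong T_M'\times\mathcal{G}_M$ from \eqref{eq1.1} makes the restricted map $\pi$ open with equidimensional irreducible fibers, which is enough; this is what the paper tacitly invokes and what your ``technical heart'' paragraph is gesturing at.
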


\begin{proof}
First we show the existence of the space ${\mathcal G}_{M,{\mathcal H}}$, assuming that it  is irreducible. This follows from  Lemma \ref{lemma3.7}, which says that the torus $T_{M}:=T/\stab_{T}(M)$ acts freely on $G_M\cap \mathcal{H}$.

Now we show its irreducibility. If $G_M\subset \mathcal{H}$ there is nothing to prove. On the other hand, if $G_M\not\subset \mathcal{H}$ but
$G_M\cap \mathcal{H}\ne\emptyset$, then by the lemma above, the restriction $\pi:G_M\cap \mathcal{H}\to\mathcal G_M$ of the fibration $T_{M}\to G_M\to {\mathcal G}_{M}$ from \eqref{eq1.1}  is still surjective with fiber at $\pi(x)$ given by $T'x\cap \mathcal{H}$, which is irreducible by the previous Lemma. Since $\mathcal G_M$ is irreducible, $G_M\cap \mathcal{H}$ is also irreducible. \hskip0.5cm{} 
\end{proof}

\begin{theorem}
\label{thm3.14}
Let $G_M$ be the thin Schubert cell defined by the $S_{2n}$-matroid $M=([2n],B)$. If $G_M\cap \mathcal{H}$ is not empty, then there is a fibration
\[
T_{M} \to (G_M\cap \mathcal H)\to {\mathcal G}_{M,{\mathcal H}},
\]
satisfying 
\begin{equation*}
    \dim {\mathcal G}_{M,{\mathcal H}} =\begin{cases}
        \dim \mathcal G_M +\dim T_M' - \dim T_{M},&\text{ if } B \text{ has no pair }\{i,i^*\},\\
        &\\
        \dim \mathcal G_M  - 1+ \dim T_M' - \dim T_{M},&\text{ if } B \text{ has at least 2 pairs }\{i,i^*\}.\\
    \end{cases}
\end{equation*}
\end{theorem}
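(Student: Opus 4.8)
The fibration itself is essentially already in hand: by Lemma~\ref{lemma:existence_and_irred}, the categorical quotient ${\mathcal G}_{M,{\mathcal H}}=(G_M\cap\mathcal H)/T_M$ exists and is irreducible, and by Lemma~\ref{lemma3.7} the torus $T_M=T/\stab_T(M)$ acts freely on $G_M\cap\mathcal H$, so the quotient map $T_M\to(G_M\cap\mathcal H)\to{\mathcal G}_{M,{\mathcal H}}$ is a (locally trivial, since $T_M$ is a split torus acting freely) fibration. What remains is the dimension formula, which I would extract from a dimension count on the fibration $T_M\to(G_M\cap\mathcal H)\to{\mathcal G}_{M,{\mathcal H}}$ together with the original fibration $T_M'\to G_M\to\mathcal G_M$ of \eqref{eq1.1}, where I abbreviate $T_M'$ for the subtorus of $T'$ acting freely on $G_M$ (i.e.\ $T'/\stab_{T'}(M)$).

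**The dimension count.** From $T_M\to(G_M\cap\mathcal H)\to{\mathcal G}_{M,{\mathcal H}}$ we get $\dim(G_M\cap\mathcal H)=\dim{\mathcal G}_{M,{\mathcal H}}+\dim T_M$, hence $\dim{\mathcal G}_{M,{\mathcal H}}=\dim(G_M\cap\mathcal H)-\dim T_M$. Similarly $\dim G_M=\dim\mathcal G_M+\dim T_M'$. Subtracting, $\dim{\mathcal G}_{M,{\mathcal H}}=\dim\mathcal G_M+\dim T_M'-\dim T_M+\big(\dim(G_M\cap\mathcal H)-\dim G_M\big)$. So the whole statement reduces to computing the defect $\dim(G_M\cap\mathcal H)-\dim G_M$. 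This is exactly the dichotomy supplied by Lemma~\ref{lem:vacuum}: in case (1), when $B$ contains no pair $\{i,i^*\}$, we have $G_M\subset\mathcal H$, so $G_M\cap\mathcal H=G_M$ and the defect is $0$; in case (3), when $B$ contains at least two such pairs, the map $\pi:G_M\cap\mathcal H\to\mathcal G_M$ is surjective with irreducible fibers $T'x\cap\mathcal H$ that are hypersurfaces in the $T'x$-orbits (also by Lemma~\ref{lem:vacuum}), so $G_M\cap\mathcal H$ is a hypersurface in $G_M$ and the defect is $-1$. Plugging these two values in yields precisely the two cases of the formula. (Case (2), a single pair $\{i,i^*\}$ in $B$, is excluded here since then $G_M\cap\mathcal H=\emptyset$ by Lemma~\ref{lem:vacuum}(2), so the hypothesis ``$G_M\cap\mathcal H$ is not empty'' rules it out.)

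**The main obstacle.** The only genuinely delicate point is justifying that $G_M\cap\mathcal H$ is \emph{equidimensional} of the expected dimension in case (3)—i.e., that ``hypersurface in $G_M$'' really drops the dimension by exactly one. Since $G_M$ is irreducible (being a torus bundle over the irreducible $\mathcal G_M$) and $\mathcal H$ is a hyperplane not containing $G_M$, the intersection $G_M\cap\mathcal H$ is a nonempty proper closed subset, hence has dimension $\dim G_M-1$ by Krull's principal ideal theorem (it is cut out by a single equation). This suffices; one does not even need irreducibility of $G_M\cap\mathcal H$ for the dimension statement, though it was already established in Lemma~\ref{lemma:existence_and_irred}. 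A secondary bookkeeping point is to keep straight which torus acts on which space—$T_M'\subseteq T'$ on $G_M$ versus $T_M\subseteq T$ on $G_M\cap\mathcal H$—and to note that the freeness of the $T_M$-action on $G_M\cap\mathcal H$ follows because $G_M\cap\mathcal H\subseteq G_M$ and $T_M$ embeds into $T_M'$ compatibly via the splitting $j:T'\to T$ of Section~\ref{sect_3.4}. I would write the proof in roughly the order: (i) invoke Lemma~\ref{lemma3.7} and Lemma~\ref{lemma:existence_and_irred} to produce the fibration; (ii) write the two dimension equalities and subtract; (iii) split into the two cases using Lemma~\ref{lem:vacuum}(1) and (3) to evaluate $\dim(G_M\cap\mathcal H)-\dim G_M$; (iv) conclude.
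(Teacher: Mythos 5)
Your proof is correct and follows essentially the same route as the paper's, which simply invokes Lemma~\ref{lem:vacuum} and Lemma~\ref{lemma:existence_and_irred} and asserts that both claims follow; you have supplied the dimension bookkeeping (subtracting the two fibration identities and evaluating the defect $\dim(G_M\cap\mathcal H)-\dim G_M$ via Krull) that the paper's one-line proof leaves implicit. The only clarification worth noting is that $T_M'$ in the theorem statement is the quotient $T'/\stab_{T'}(M)$ appearing in \eqref{eq1.1} and Proposition~\ref{prop3.16}, not literally a subtorus, but your identification and use of it is correct.
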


\begin{proof} 
As indicated at the beginning of this section, the natural action of $2n$-torus $T'=({\mathbb C}^*)^{2n}$ on the Grassmannian $G(2,2n)$  restricts to an action of the symplectic $n$-torus $T$ on the symplectic Grassmannian. Now  both claims follow at once by applying Lemma \ref{lem:vacuum} and Lemma \ref{lemma:existence_and_irred} to the fibration $T_{M}\to G_M\to {\mathcal G}_{M}$ from \eqref{eq1.1}.
\end{proof}

\begin{proposition} 
\label{prop3.16}
Let $G_M$ be the thin Schubert cell defined by the $S_{2n}$-matroid $M=([2n],B)$. If $G_M\cap \mathcal{H}$ is not empty, then there is a surjective map
$
\pi: {\mathcal G}_{M,{\mathcal H}}\to\mathcal G_M,
$
whose fiber at $[x]\in \mathcal G_M$ is $(T'x\cap \mathcal H)/T_{M}$.
\end{proposition}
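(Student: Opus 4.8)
The plan is to obtain $\pi$ by factoring a single morphism through a categorical quotient, and then to read off its surjectivity and its fibres orbit by orbit. To begin, I would note that the quotient map $q\colon G_M\to\mathcal G_M$ of the torus fibration \eqref{eq1.1} for $G(2,2n)$---whose fibres are precisely the $T'$-orbits in $G_M$, since $\stab_{T'}(M)$ acts trivially on $G_M$---is invariant under the $2n$-torus $T'$, hence also under the symplectic subtorus $T\subseteq T'$ and under the quotient torus $T_M$ that acts freely on $G_M\cap\mathcal H$ (Lemmas~\ref{lemma3.7} and~\ref{lemma:existence_and_irred}). Restricting $q$ along the inclusion $G_M\cap\mathcal H\hookrightarrow G_M$ gives a $T_M$-invariant morphism $G_M\cap\mathcal H\to\mathcal G_M$ (this is exactly the map already used in the proof of Lemma~\ref{lemma:existence_and_irred}), which by the universal property of the categorical quotient $\mathcal G_{M,\mathcal H}=(G_M\cap\mathcal H)/T_M$ factors uniquely as $\pi\circ p$, where $p\colon G_M\cap\mathcal H\to\mathcal G_{M,\mathcal H}$ is the quotient map. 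This produces $\pi$ as a morphism of varieties over $\mathcal G_M$.

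For surjectivity, since $p$ is onto it suffices to prove $q(G_M\cap\mathcal H)=\mathcal G_M$, i.e.\ that every $T'$-orbit contained in $G_M$ meets $\mathcal H$. Under the hypothesis $G_M\cap\mathcal H\neq\emptyset$, Lemma~\ref{lem:vacuum} leaves only two cases: if the matroid $B$ has no pair $\{i,i^*\}$ then $G_M\subseteq\mathcal H$ and the claim is immediate; if $B$ has at least two such pairs, the computation in the proof of Lemma~\ref{lem:vacuum} shows that for \emph{every} $x\in G_M$ the form $s$ restricts to a nonzero regular function on $T'x$, so $T'x\cap\mathcal H$ is a nonempty hypersurface in $T'x$. (The intermediate case of exactly one pair cannot occur, as it forces $G_M\cap\mathcal H=\emptyset$.) In either case $q^{-1}([x])=T'x$ meets $\mathcal H$, so $\pi$ is surjective.

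For the fibres, fix $[x]\in\mathcal G_M$ with a representative $x\in G_M$. Since the fibres of $q$ are the $T'$-orbits we have $q^{-1}([x])=T'x$, hence $p^{-1}\bigl(\pi^{-1}([x])\bigr)=q^{-1}([x])\cap(G_M\cap\mathcal H)=T'x\cap\mathcal H$, using $T'x\subseteq G_M$. This set is stable under $T$---being the intersection of the $T'$-stable orbit $T'x$ with the $T$-stable hyperplane $\mathcal H$---hence saturated for the free $T_M$-action, so its image under $p$ carries the structure of the geometric quotient $(T'x\cap\mathcal H)/T_M$; that is, $\pi^{-1}([x])\cong(T'x\cap\mathcal H)/T_M$, as asserted.

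The step requiring the most care is the interplay between categorical and geometric quotients in the last paragraph: one must know that $p$ and $q$ have the $T_M$- and $T'$-orbits as their genuine fibres, and that the image under $p$ of the $T_M$-saturated locally closed subset $T'x\cap\mathcal H$ is again locally closed with the expected quotient structure. This rests on the freeness statements of Lemma~\ref{lemma3.7} together with standard facts on quotients by free torus actions; apart from that, the proof is bookkeeping with the chain of tori $T_M$, $T$, $T'$ and the nonvanishing of $s|_{T'x}$ already isolated in Lemma~\ref{lem:vacuum}.
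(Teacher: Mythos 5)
Your proof is correct and follows essentially the same route as the paper: both build $\pi$ as the descent of the natural map $G_M\cap\mathcal H\to\mathcal G_M$, both derive surjectivity from the fact (Lemma~\ref{lem:vacuum}) that $T'x\cap\mathcal H$ is nonempty for every $x\in G_M$, and both identify the fibre over $[x]$ by observing that $T'x\cap\mathcal H$ is $T_M$-saturated. The only difference is cosmetic: you invoke the universal property of the categorical quotient to construct $\pi$, whereas the paper simply defines $\pi(\overline y)=[y]$ on representatives; the substance is identical.
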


\begin{proof} We  only work out the case when $G_M\cap \mathcal H\subset G_M$ is a hypersurface, since the other case is simpler.
 
For any $\overline{y}\in {\mathcal G}_{M,{\mathcal H}}$ define $\pi(\overline{y})=[y]\in \mathcal G_M$. Remember that for any $x\in G_M$, we have that $T'x\cap \mathcal H$ is a non-empty hypersurface by Lemma \ref{lem:vacuum}, therefore the map is surjective.

Next, for any $x\in G_M$ we define an action $T_{M}\times T'x\to T'x$ via $(\lambda, tx)\mapsto (\lambda t)x$. Since
$\mathcal H$ is $T_{M}$ invariant, the action above restricts to a natural free action  $T_{M}\times (T'x\cap \mathcal H)\to(T'x\cap \mathcal H)$.

Now assume $x,y\in G_M\cap \mathcal H$ are such that $\overline{y}\ne\overline{x}$ but $[y]=[x]$, then there exist $t\in T'\backslash T$ such that $y=tx$. Since $x,y \in G_M\cap \mathcal H$, then they are both in $T' x\cap \mathcal H$, but $T_{M}$ acts on $T'x \cap \mathcal H$ and by hypothesis $T_{M}\cdot y\ne T_{M}\cdot x$, so $x$ and $y$ belong to different classes on $(T'x\cap \mathcal H)/T_{M}$. The reciprocal is direct.
\end{proof}

If $G_M\subset \mathcal{H}$, then $T' x\cap \mathcal H= T' x \cong T'_M$, and therefore ${\mathcal G}_{M,{\mathcal H}}=\mathcal G_M\times (T'_M/T_M)$.

If $G_M\cap \mathcal H\subset G_M$ is a hypersurface, then 
$T'x \cap \mathcal H\subset T'$ is isomorphic to a hypersurface of $T_M'$ and different $x\in G_M$ produce isomorphic hypersurfaces on $T'_M$.

\section{On the Representability of Coxeter Matroids of Type $BC_n$}\label{sec4}

Let $\mu_{T'}:G(2,2n)\xrightarrow[]{}\mathbb{R}^{2n}$ be the moment map for the action of torus $T'$ on the Grassmannian $G(2,2n)$. If we consider $\mathbb{R}^{2n}$ with basis $\{e_i\::\:i\in E_n=[n]\cup [n^*]\}$, the image  of the moment map is contained in the polytope $\Delta(2,2n)=\text{Conv}\big\{e_{\{i,j\}}=e_i+e_j\::\:\{i,j\}\in\binom{E_n}{2}\big\}$.

For  $x\in G(2,2n)$ we denote by $M'(x)$ the image of $\overline{T'\cdot x}$ under $\mu_{T'}$. 
This is a Coxeter $S_{2n}$-Coxeter matroid  and it is the lattice subpolytope $\text{Conv}\big\{e_{\{i,j\}}\::\:\{i,j\}\in\binom{E_n}{2},\:x_{i,j}\neq 0 \big\}$ of $\Delta(2,2n)$.

Let $\mu_{T}:\text{SpG}(2,2n)\xrightarrow[]{}\mathbb{R}^n$ be the moment map for the torus $T$ and its action on $\text{SpG}(2,2n)$.  If $\mathbb{R}^{n}$ has basis $\{\phi_i\::\:i\in [n]\}$, the image of the moment map is contained in the polytope $Sp\Delta(2,2n)=\text{Conv}\big\{\phi_{\{i,j\}}=\varepsilon_i+\varepsilon_j\::\:\{i,j\}\in J_2^n\}$, where $\varepsilon_i=\phi_i$ if $i\in[n]$ and $\varepsilon_i=-\phi_i$ if $i\in[n^*]$.
For  $x\in \text{SpG}(2,2n)$ we denote by $M(x)$ the image of $\overline{T\cdot x}$ under $\mu_{T}$. 
This is a Coxeter $BC_n$-matroid and it is the lattice subpolytope 
$\text{Conv}\big\{\phi_{\{i,j\}}\::\:\{i,j\}\in J_2^n,\:x_{i,j}\neq 0 \big\}$  of $Sp\Delta(2,2n)$.

\begin{remark}\label{rem4.1}
For $\mathbb{R}^{2n}$ with basis $\{e_i\::\:i\in E_n\}$, and  $\mathbb{R}^{n}$ with basis $\{\phi_1,\ldots,\phi_{n}\}$ denote by $\pi:\mathbb{R}^{2n}\xrightarrow[]{}\mathbb{R}^{n}$ the projection sending $e_i$ to $\varepsilon_i$.  Note that $\text{Ker}(\pi)$ is spanned by $\{e_i+e_{i^*}\::\:i=1,\ldots,n\}$.

We define $\log\circ|\cdot|:T'\rightarrow {\mathbb R}^{2n}$ by sending 
 $u'=\text{diag}(\lambda_i)\in T'$ to $\log(|u'|)=(\log|\lambda_i|)$, and we define $\log\circ|\cdot|:T\rightarrow {\mathbb R}^{n}$ similarly.
 
Let $j:T'\rightarrow T$ be the map splitting the inclusion $i:T\hookrightarrow T'$ (see Section \ref{sect_3.4}).  Then, the following diagram commutes:

 
\begin{equation}\label{eq4.1.1}
    \xymatrix{T'\ar[r]^{j}\ar[d]_ {\log\circ|\cdot|}& T\ar[d]^ {\log\circ|\cdot|}\\
    \mathbb{R}^{2n}\ar[r]^{\pi}&\mathbb{R}^n\\}
\end{equation}

Since the maps $\log\circ|\cdot|$ are usually called the Archimedean tropicalization maps, we may consider $\pi$ as the Archimedean tropicalization of $j$. 
\end{remark}

The following proposition gives the
relationship between $BC_n$ and $S_{2n}$ matroids:
\begin{proposition}\label{pro4.2}
Let $\pi:\mathbb{R}^{2n}\xrightarrow[]{}\mathbb{R}^{n}$ be the map from {\rm Remark \ref{rem4.1}}. Given $x\in \text{\rm SpG}(2,2n)$, then 
\begin{equation*}
    M(x)=\pi(M'(x))
\end{equation*}
\end{proposition}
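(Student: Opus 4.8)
The plan is to unwind both moment-map images into explicit polytopes and then check that the linear projection $\pi$ carries one onto the other vertex-by-vertex. Concretely, for $x \in \text{SpG}(2,2n) \subset G(2,2n)$, the same point $x$ has a well-defined set of nonzero Pl\"ucker coordinates, and since the symplectic Grassmannian sits inside $G(2,2n)$ as the hyperplane section $\mathcal{H} = Z(s)$, the pairs $\{i,j\}$ indexing those coordinates all lie in $J_n^2$ — indeed, if $\{i,j\}\notin J_n^2$ then $j = i^*$, and such coordinates $x_{i,i^*}$ are constrained (but not forced to vanish) by $s$, yet the \emph{indexing set} $\binom{E_n}{2}$ versus $J_n^2$ is exactly the distinction between $M'(x)$ and $M(x)$. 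Wait — more carefully: ${\mathcal B}'(x) = \{\{i,j\} : x_{i,j}\neq 0\}$ can include pairs $\{i,i^*\}$, while ${\mathcal B}(x) = \{\{i,j\}\in J_n^2 : x_{i,j}\neq 0\}$ discards those. So the first step is to recall from Section~\ref{subsec2.2} and Section~\ref{sect_3.4} the descriptions
\[
M'(x) = \text{Conv}\{e_i + e_j : \{i,j\}\in {\mathcal B}'(x)\}, \qquad M(x) = \text{Conv}\{\varepsilon_i + \varepsilon_j : \{i,j\}\in {\mathcal B}(x)\},
\]
where $\varepsilon_i = \phi_i = \pi(e_i)$ for $i\in[n]$ and $\varepsilon_{i} = -\phi_{i^*} $... — I should align the sign conventions with Remark~\ref{rem4.1}: $\pi$ sends $e_i \mapsto \varepsilon_i$ for every $i\in E_n$, so $\pi(e_i + e_j) = \varepsilon_i + \varepsilon_j = \phi_{\{i,j\}}$.

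The second step is the key geometric input: the diagram~\eqref{eq4.1.1} commutes, i.e.\ $\pi \circ (\log\circ|\cdot|) = (\log\circ|\cdot|)\circ j$ where $j: T' \to T$ splits the inclusion $T \hookrightarrow T'$. Because the $T$-action on $\text{SpG}(2,2n)$ is the restriction along $i: T\hookrightarrow T'$ of the $T'$-action on $G(2,2n)$, for $x\in \text{SpG}(2,2n)$ we have $\overline{T\cdot x} = \overline{i(T)\cdot x} \subseteq \overline{T'\cdot x}$. Applying $\mu_{T'}$ and tracking how the two moment maps differ: the moment polytope $\mu_{T'}(\overline{T'x})$ is the Newton polytope of the $T'$-orbit, with vertices the weights $e_i+e_j$ appearing; restricting the torus to $T = \ker(\sigma)$ replaces the weight lattice $\mathbb{Z}^{2n}$ by its quotient under $\pi$ (since $\ker\pi$ is spanned by the $e_i + e_{i^*}$, which are precisely the characters of $T'$ trivial on $T$). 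Hence $\mu_T(\overline{Tx})$ is the image under $\pi$ of $\mu_{T'}(\overline{T'x})$ — \emph{provided} we also account for the pairs $\{i,i^*\}$: under $\pi$, $e_i + e_{i^*} \mapsto \varepsilon_i + \varepsilon_{i^*} = \phi_i - \phi_i = 0$... no: $\varepsilon_i = \phi_i$, $\varepsilon_{i^*} = -\phi_{i^*}$, but $i^*$ corresponds to index $i$ under the identification, so $\varepsilon_{i^*} = -\phi_i$, giving $e_i + e_{i^*} \mapsto 0$. This is harmless for the convex-hull description since $0 = \varepsilon_i + \varepsilon_{i^*}$ is already a non-extremal combination, but I must confirm it does not \emph{add} a spurious vertex or that its possible presence in $M'(x)$ does not affect $\pi(M'(x))$.

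The third step assembles these: $\pi(M'(x)) = \pi(\text{Conv}\{e_i+e_j : \{i,j\}\in{\mathcal B}'(x)\}) = \text{Conv}\{\pi(e_i+e_j) : \{i,j\}\in{\mathcal B}'(x)\} = \text{Conv}\{\varepsilon_i+\varepsilon_j : \{i,j\}\in{\mathcal B}'(x)\}$, using that $\pi$ is linear so commutes with $\text{Conv}$. It remains to see this equals $\text{Conv}\{\varepsilon_i+\varepsilon_j : \{i,j\}\in{\mathcal B}(x)\} = M(x)$. Since ${\mathcal B}(x) = {\mathcal B}'(x)\cap J_n^2$, the only discrepancy is pairs $\{i,i^*\}\in {\mathcal B}'(x)$, each of which maps to $0$ under $e_i+e_{i^*}\mapsto 0$; because $0$ lies in the convex hull of the other vertices (or is irrelevant), removing these pairs does not change the convex hull — this uses that $\mu_T(\overline{Tx})$ is known independently (from Section~\ref{sect_3.4}) to be $\text{Conv}\{\phi_{\{i,j\}} : \{i,j\}\in{\mathcal B}(x)\}$, so the two convex hulls are forced to agree. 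The main obstacle I anticipate is precisely this last bookkeeping: showing the "extra" generators $e_i + e_{i^*}$ contribute nothing after projection, and making fully rigorous the claim that restricting the torus corresponds to projecting the moment polytope along $\pi$ — this is standard toric/GIT folklore (the moment map for a subtorus is the composition with the dual of the inclusion), but the cleanest route is probably to cite the commuting square~\eqref{eq4.1.1} and Equation~\eqref{equation_moment_sym} together with~\eqref{equation_moment}, deriving the identity from the fact that $\overline{T x}$ and $\overline{T' x}$ have the same underlying point set's Pl\"ucker support and that $\mu_T = \pi \circ \mu_{T'}$ on $\text{SpG}(2,2n)$, which itself follows from the commutativity of~\eqref{eq4.1.1}.
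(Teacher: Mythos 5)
Your computation --- that $\pi$ is linear, sends $e_i+e_j$ to $\varepsilon_i+\varepsilon_j$, and commutes with $\text{Conv}$ --- is exactly the paper's, so in essence you take the same route. You also correctly flag a subtlety the paper's proof steps over: since $\pi(e_i+e_{i^*})=\varepsilon_i+\varepsilon_{i^*}=0$, the set equality $\pi\bigl(\{e_{\{i,j\}}:\{i,j\}\in\mathcal{B}'(x)\}\bigr)=\{\phi_{\{i,j\}}:\{i,j\}\in\mathcal{B}(x)\}$ asserted there is not literally true whenever some $x_{i,i^*}\neq0$; the left side then contains the extra point $0$, which the right side never does. You are right that the two convex hulls nonetheless agree, but the justification you give --- that equation~\eqref{equation_moment_sym} independently identifies $\mu_T(\overline{Tx})$ with $\text{Conv}\{\phi_{\{i,j\}}:\{i,j\}\in\mathcal{B}(x)\}$, so ``the two convex hulls are forced to agree'' --- is circular, because the validity of \eqref{equation_moment_sym} already presupposes the very claim at stake, namely that the weight $0$ contributed by any non-admissible pair $\{i,i^*\}$ lies in the hull of the admissible weights. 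The self-contained fix is available in the paper's own toolkit: for $x\in\text{\rm SpG}(2,2n)$ the symplectic relation $\sum_i x_{i,i^*}=0$ implies that if one $x_{i,i^*}\neq0$ then at least two are (Lemma~\ref{lem:vacuum}); the three-term Pl\"ucker relation on such a quadruple $\{i,i^*,j,j^*\}$ then forces an admissible pair $\{a,b\}$ together with its conjugate $\{a^*,b^*\}$ into $\mathcal{B}(x)$ (exactly as in the proof of Lemma~\ref{lemma4.4a}), and since $\phi_{\{a,b\}}+\phi_{\{a^*,b^*\}}=0$, the point $0$ is already the midpoint of two admissible vertices, so discarding the $\{i,i^*\}$ pairs does not shrink the hull. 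With that inserted, your argument is complete and in fact a bit more careful than the paper's stated proof.
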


\begin{proof}
Since $\pi(e_i)=\varepsilon_i$ for $i\in E_n$, by the linearity of $\pi$  we have 
$$\pi(e_{\{i,j\}})=\pi(e_i+e_j)=\varepsilon_i+\varepsilon_j =\phi_{\{i,j\}}.$$
It follows that $\pi(\{e_{\{i,j\}}\::\:\{i,j\}\in\binom{E_n}{2},\:x_{i,j}\neq 0\})=\{\phi_{\{i,j\}}\::\:\{i,j\}\in J_2^n,\:x_{i,j}\neq0\}$. Lastly, since $\pi$ is linear it commutes with the operation Conv, and the result follows from the definitions of
 $M'(x)=\text{Conv}\big\{e_{\{i,j\}}\::\:\{i,j\}\in\binom{E_n}{2},\:x_{i,j}\neq 0\big\}$ and  $M(x)=\text{Conv}\big\{\phi_{\{i,j\}}\::\:\{i,j\}\in J_2^n,\:x_{i,j}\neq 0 \big\}$.
\end{proof}

\begin{corollary}
\label{cor:sym_lift}
    If $\Delta_N$ is a symplectic matroid which is representable over $\mathbb{C}$, then $\pi^{-1}(\Delta_N)=\{\Delta_M\in M^2_{S_{2n}}\::\:\pi(\Delta_M)=\Delta_N\}$ is non-empty
\end{corollary}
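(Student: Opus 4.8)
The plan is to produce an explicit element of $\pi^{-1}(\Delta_N)$ by passing through the geometry. Since $\Delta_N$ is representable over $\mathbb{C}$, by definition there is a point $x\in\text{SpG}(2,2n)$ with $M(x)=\Delta_N$. Recall from Section \ref{subsec3.3} (see also Section \ref{subsec3.2}) that $\text{SpG}(2,2n)=G(2,2n)\cap\mathcal{H}$ sits inside $G(2,2n)$ with the embedding into $\mathbb{P}^N$ being the restriction of the Pl\"ucker embedding. Hence the very same point $x$, with the very same Pl\"ucker coordinates $[x_{i,j}]$, is a point of $G(2,2n)$ and therefore defines an $S_{2n}$-matroid $\Delta_M:=M'(x)$ in the sense of Definition \ref{S2nmatroid}. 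By the Remark following that definition, $\Delta_M\in M^2_{S_{2n}}=M^2_{S_{2n}}(\mathbb{C})$, so $\Delta_M$ is a legitimate candidate for the fiber.

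Next I would invoke Proposition \ref{pro4.2}, which asserts precisely that $M(x)=\pi(M'(x))$ for every $x\in\text{SpG}(2,2n)$. Combining this with the previous paragraph gives $\pi(\Delta_M)=\pi(M'(x))=M(x)=\Delta_N$, so $\Delta_M\in\pi^{-1}(\Delta_N)$ and the set is non-empty. (As a bonus, the lift $\Delta_M$ we constructed is itself representable over $\mathbb{C}$, which will be relevant for the later stratification results.)

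Since all the substantive content is already packaged in Proposition \ref{pro4.2}, there is no genuine obstacle here; the one point that merits a sentence of care is the identification of the Pl\"ucker coordinates of $x$ under the two viewpoints, and this is immediate from the description of $\text{SpG}(2,2n)$ as a linear section. One could alternatively argue purely combinatorially, directly lifting the vertex set $\{\phi_{\{i,j\}}:\{i,j\}\in J_n^2,\ x_{i,j}\neq0\}$ of $\Delta_N$ to $\{e_{\{i,j\}}:\{i,j\}\in J_n^2,\ x_{i,j}\neq0\}$ and taking its convex hull inside $\Delta(2,2n)$, but checking that this polytope is an $S_{2n}$-matroid amounts to the same observation, so the geometric route is cleaner and already yields a representable lift.
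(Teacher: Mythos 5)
Your proof is correct and follows essentially the same route as the paper's: take a representing point $x\in\text{SpG}(2,2n)$ for $\Delta_N$, form the $S_{2n}$-matroid $M'(x)$, and apply Proposition~\ref{pro4.2} to conclude $\pi(M'(x))=M(x)=\Delta_N$. You simply spell out the identification of Pl\"ucker coordinates under the closed embedding $\text{SpG}(2,2n)\hookrightarrow G(2,2n)$ more carefully than the paper's one-line proof does.
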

\begin{proof}
    If $\Delta_N=\mu_T(x)$ for some $x\in \text{\rm SpG}(2,2n)$, then $M'(x)=\Delta\in \pi^{-1}(\Delta_N)$. 
\end{proof}

If $M$ is a (symmetric) matroid of rank 2 on the set of labels $E$, it can be described as a pair $M=(E,\Pi)$ where   $\Pi=\{P_1,\ldots,P_\ell\}$ is a non-intersecting family on $E$ with $\ell\geq2$ (see Definition \ref{def:nif}).

\begin{definition}
Given a  matroid $M=(E_n,\Pi)$ in the form of a non-intersecting family, we define its degree $\deg(M)$ as $\deg(M)=\#\{i\::\:i\in P_a,\:i^*\in P_b,\:a\neq b\}$. 
\end{definition}

In \cite{EFG}, there are two operations of codimension 1 that one can perform on $\Pi$:
\begin{enumerate}
\item {\it Erasing} a label $i\in P_a$, which erases any base containing it,
\item {\it Merging} two bags $P_a,P_b\in \Pi$, which we denote by $\mu_{a\cup b}$, i.e. $\mu_{a\cup b}(P_a,P_b)=P_a\cup P_b$, and 
\begin{equation}
\mu_{a\cup b}(\Pi)=\Pi-P_a-P_b+P_a\cup P_b.
\end{equation}
This is the simplest way to erase the base $\{i,j\}$ when $i\in P_a$ and $j\in P_b$. The {\it inverse} operation is to {\it split} a bag $P_a$ of cardinality at least 2 at some element $i\in P_a$, which we denote by $\sigma_{P_a|i}$
 and given by

\begin{equation}
\sigma_{P_a| i}(\Pi)=\Pi-P_a+(P_{a}-\{i\})+\{i\}
\end{equation}
\end{enumerate}

 We now introduce a combinatorial version of the projection $\pi$ from Remark \ref{rem4.1}, which we denote it also by $\pi$.

\begin{definition}
\label{def:sympl_proj}
    Given a matroid $M$ on $E_n$ with bases $B(M)=\{\{i,j\}\::\:i,j\in E_n\}$, let $A(M)=\{i\in E_n: \{i,i^*\}\in B(M)\}$. We define its symplectic projection of $M$ as  $\pi(M)=(E_n,*,B(\pi(M)))$, where  $B(\pi(M))=B(M)\setminus\{\{i,i^*\}\::\:i\in A(M)\}$ are the admissible bases. 
\end{definition}

\begin{remark}
    Note that if $M$ is a symmetric matroid, then $B(\pi(M))=\emptyset$ if and only if $B(M)=\{\{i,i^*\}\}$ for some $i\in [n]$. The sufficiency is clear, now suppose that $B(\pi(M))=\emptyset$ and $B(M)\neq\{\{i,i^*\}\}$, then there should exist $\{j,j^*\}\in B(M)$ with $j\neq i$, but this forces one of the other diagonals to appear (see the proof of Lemma \ref{lemma4.4a}), which is a contradiction.
    \end{remark} 

 Recall that the degree of $M$ is the cardinality of $A(M)$.

\begin{lemma}\label{lemma4.4a}
Let $M_1,M_2$ be two matroids of degree at least $2$ such that
$\pi(M_1)=\pi(M_2)$. Then, there exists a matroid $M$ such that:
\begin{itemize}
\item[{\rm (i)}] $\pi(M)=\pi(M_1)=\pi(M_2)$.
\item[{\rm (ii)}] If $\mu_i$ denotes the composition of merging some bags, then 
$\mu_1(M)=M_1$ and $\mu_2(M)=M_2$.
\end{itemize}
\end{lemma}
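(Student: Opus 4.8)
The plan is to build $M$ as the common \emph{unmerging} of $M_1$ and $M_2$ that keeps the symplectic part fixed. Write $N=\pi(M_1)=\pi(M_2)$ as a triple $(E_n,*,B(N))$, and recall that passing from $M_i$ to $N$ only deletes the diagonal pairs $\{k,k^*\}$ with $k\in A(M_i)$; in particular $A(M_1)$ and $A(M_2)$ need not be related, but $B(N)$ is common. First I would describe each $M_i$ in non-intersecting-family form $M_i=(E_n,\Pi_i)$ and observe that the diagonals present in $M_i$ correspond exactly to pairs of bags $P_a,P_b\in\Pi_i$ with $k\in P_a$ and $k^*\in P_b$ for some $k$; the bases in $B(N)$ correspond to the remaining admissible pairs. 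The idea is then to \emph{refine} the partition: take $M$ to be the matroid whose non-intersecting family $\Pi$ is a common refinement of $\Pi_1$ and $\Pi_2$, chosen so that whenever $k\in P_a$ and $k^*\in P_b$ in \emph{either} $\Pi_1$ or $\Pi_2$, the labels $k$ and $k^*$ lie in distinct bags of $\Pi$ as well. Concretely, one may take $\Pi$ to be generated by splitting bags until every bag of $M$ is contained in a bag of $\Pi_1$ and in a bag of $\Pi_2$, and then merging back only those pairs of bags that are merged in \emph{both} $\Pi_1$ and $\Pi_2$; equivalently, $\{i,j\}\in B(M)$ iff $\{i,j\}\in B(M_1)\cap B(M_2)$ \emph{together with} all the diagonals $\{k,k^*\}$ forced to reappear by the partition structure.

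The second step is to verify (i): that $\pi(M)=N$. By construction $B(M)\supseteq B(N)$ up to diagonals, and the diagonals of $M$ are precisely those $\{k,k^*\}$ with $k$ and $k^*$ in different bags of $\Pi$, i.e. $A(M)=A(M_1)\cup A(M_2)$ (this is where the "a diagonal forces another diagonal" phenomenon alluded to in the Remark before the Lemma is used: if $\{j,j^*\}$ and $\{i,i^*\}$ are both bases with $i\ne j$, then $\{i,j^*\},\{j,i^*\}$, etc.\ are admissible bases, so the admissible part is never empty once $\deg\ge 2$, and the refinement does not accidentally kill $B(N)$). Deleting exactly the diagonals $\{\{k,k^*\}:k\in A(M)\}$ from $B(M)$ then returns $B(N)$, since every admissible pair $\{i,j\}\in B(N)$ survives in $M$ (it is admissible in both $M_1$ and $M_2$, hence $i,j$ never got separated in a way that destroyed the basis) and no new admissible pair is created. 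Here I would invoke the cryptomorphism between the non-intersecting-family description and the base description of rank-$2$ matroids (Section \ref{subsec2.2}) to make "admissible pair survives refinement" precise: splitting a bag only erases bases $\{i,j\}$ with $i,j$ in that bag, never creates one, and merging does the reverse.

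The third step is (ii): exhibiting compositions of merges $\mu_1,\mu_2$ with $\mu_1(M)=M_1$, $\mu_2(M)=M_2$. Since $\Pi$ is a refinement of $\Pi_1$, each bag of $\Pi_1$ is a disjoint union of bags of $\Pi$, so $\Pi_1$ is obtained from $\Pi$ by a finite sequence of the merge operations $\mu_{a\cup b}$ of \cite{EFG}; call this composite $\mu_1$. Likewise for $\mu_2$. The only thing to check is that these merges do not disturb the bookkeeping, i.e.\ that after merging we indeed land on $M_1$ and not on some matroid with extra diagonals deleted — but merging can only \emph{remove} the separation between a label and its star, hence can only \emph{remove} diagonals from the base set, and the refinement was set up so that $A(M)=A(M_1)\cup A(M_2)\supseteq A(M_1)$; merging the bags of $\Pi$ up to $\Pi_1$ removes exactly the diagonals $\{k,k^*\}$ with $k\in A(M)\setminus A(M_1)$, leaving $B(M_1)$.

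The main obstacle I anticipate is the combinatorial case-checking in step two around the "forced diagonals": one must be careful that the common refinement $\Pi$ does not separate a pair $\{i,j\}$ that is a basis of both $M_1$ and $M_2$ in such a way that $\{i,j\}$ is no longer a basis of $M$ — but since $\{i,j\}$ is a basis of $M_k$ means $i,j$ lie in different bags of $\Pi_k$, and $\Pi$ refines both, $i$ and $j$ automatically lie in different bags of $\Pi$, so $\{i,j\}\in B(M)$; the genuinely delicate point is rather the \emph{converse} direction, ensuring $M$ does not acquire spurious admissible bases, which is controlled by the fact that refinement never creates bases. A secondary subtlety is that the hypothesis $\deg M_i\ge 2$ is exactly what guarantees $B(\pi(M_i))\ne\emptyset$ (by the Remark preceding the Lemma), which is needed so that "$\pi(M)=\pi(M_1)=\pi(M_2)$" is an equality of honest $BC_n$-matroids with nonempty admissible base set, rather than a degenerate one; I would flag this use of the hypothesis explicitly.
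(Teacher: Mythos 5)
Your plan, namely to take for $M$ the common refinement of the non-intersecting families $\Pi_1$ and $\Pi_2$ in the lattice of set partitions, does land on the same object the paper ultimately builds, but the route is genuinely different: the paper argues by case analysis on $A(M_1)$, $A(M_2)$ and invokes the octahedral Grassmann--Pl\"ucker relations $\rho(\{i,i^*,k,k^*\})$ to pin down the shape of the bags (showing, e.g., that $Q_i=\{i\}$ and $Q_{i^*}=\{i^*\}$), whereas you try to go straight to the refinement and verify its properties abstractly. That shortcut is in principle available here, but as written your verification has two gaps that need to be filled.

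First, the sentence ``ensuring $M$ does not acquire spurious admissible bases, which is controlled by the fact that refinement never creates bases'' is backwards: refinement (splitting bags) \emph{does} create bases, merging destroys them. The correct reason there are no spurious admissible bases is structural and uses the hypothesis directly: in the common refinement one has $B(M)=B(M_1)\cup B(M_2)$, and since $\pi(M_1)=\pi(M_2)$ means $B(M_1)\cap J_n^2=B(M_2)\cap J_n^2=B(N)$, intersecting the union with $J_n^2$ again gives exactly $B(N)$. Relatedly, your ``$\{i,j\}\in B(M)$ iff $\{i,j\}\in B(M_1)\cap B(M_2)$ together with the forced diagonals'' should be the union $B(M_1)\cup B(M_2)$; the intersection-plus-extras formulation obscures what is going on.

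Second, and more seriously, you never address loops, and this is where $\deg\ge 2$ is really used. In the non-intersecting-family description a label may lie in no bag at all, and merges never change the loop set. So for $\mu_1(M)=M_1$ and $\mu_2(M)=M_2$ to both be compositions of merges, $M_1$ and $M_2$ must have identical loop sets (and then the common refinement $\{P_a\cap Q_b\}$ covers exactly the common non-loops, so each bag of $\Pi_i$ really is a disjoint union of bags of $\Pi$). This equality of loop sets is not automatic from $\pi(M_1)=\pi(M_2)$ alone; it follows from $\deg M_1,\deg M_2\ge 2$ by the following argument, which you should supply: if $e$ is a loop of $M_1$ but not of $M_2$, then for every $f$ in a bag of $\Pi_2$ other than the bag of $e$ the pair $\{e,f\}$ is a basis of $M_2$; if $f\ne e^*$ this pair is admissible, hence lies in $B(\pi(M_1))\subseteq B(M_1)$, contradicting that $e$ is a loop of $M_1$; so the only other non-loop of $M_2$ is $e^*$, forcing $\Pi_2=\{P,\{e^*\}\}$ and $A(M_2)=\{e,e^*\}$, i.e.\ $\deg M_2=1$. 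Your stated use of $\deg\ge 2$ (``guarantees $B(\pi(M_i))\ne\emptyset$'') is not the load-bearing one. With these two points repaired, your lattice-theoretic argument does give a clean, Pl\"ucker-free alternative to the paper's proof.
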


\begin{proof} 
If $A(M_1)\subsetneq A(M_2)$  we must create only the base $\{i,i^*\}$ for $i\in A(M_2)\setminus A(M_1)$ without changing the other bases. We will do this by induction.
If $ A(M_2)\setminus A(M_1)=\{i\}$, and  $M_1$ is given by the  disjoint family $\Pi=\{P_1,\ldots,P_\ell\}$, it follows that, reordering if necessary, there exists $P_i=\{i,i^*\}$ such that $M_2=\sigma_{P_i| i}(M_1)$, which proves the first case.

For
the general case, $ A(M_2)\setminus A(M_1)=\{i_1,\ldots,i_m\}$ with $m\geq2$. Recall that
given $\{a,b,c,d\}\subset E_n$, their three diagonals are 
$$\{\{a,b\},\{c,d\}\},\quad\{\{a,c\},\{b,d\}\},\quad \{\{a,d\},\{b,c\}\}.$$ We denote by $\rho(\{a,b,c,d\})$ the octahedral Grassmann-Pl\"ucker relation involving these three diagonals.
Now, for every pair $i,k$ in the set $\{i_1,\ldots,i_m\}$, we have that $\{i,i^*\},\{k,k^*\}$ are bases of $M_2$, which means that for any $x\in G_{M_2}$, the product $x_{\{i,i^*\}}x_{\{k,k^*\}}$ is nonzero in the octahedral Grassmann-Pl\"ucker relation $\rho(\{i,i^*,k,k^*\})$. Hence, there is at least another diagonal $D$ whose product is nonzero. 
On the other hand, neither $\{i,i^*\}$ nor $\{k,k^*\}$ are bases for $M_1$ (thus, $y_{\{i,i^*\}}y_{\{k,k^*\}}=0$ for any $y\in G_{M_1}$) and the above diagonal $D$ should also appear in $M_1$, thus the Grassmann-Pl\"ucker relation $\rho(\{i,i^*,k,k^*\})$ tells us that actually the third diagonal appears also in $M_1$. Hence all three diagonals must appear in $M_2$.

Let $\Pi(M_1)=\{P_a\}_a$ and $\Pi(M_2)=\{Q_b\}_b$. It follows that $i,i^*\in P_a$, $k,k^*\in P_b$  with $a\neq b$, and $i\in Q_i$, $i^*\in Q_{i^*}$, $k\in Q_k$ and $k^*\in Q_{k^*}$, with $\#\{i,i^*,k,k^*\}=4$.

We claim that $Q_i=\{i\}$ and $Q_{i^*}=\{i^*\}$. Suppose that this is not the case,  say $\ell\in Q_i$ with $\ell\neq i$, then $\{\ell,i^*\}$ is a common basis of both matroids, but then we erase $\{i,i^*\}$  from $M_1$ when merging $Q_i$ and $Q_{i^*}$, which is a contradiction.

Now, if $\ell\in P_a-\{i,i^*\}$, then $\{\ell,i\}$ and $\{\ell,i^*\}$ appear in $B(M_2)$, which is a contradiction since they do not appear in $B(M_1)$. It follows that $P_a=\mu_{i\cup i^*}(Q_i,Q_{i^*})$. 

Therefore, $M_1$ is given by the  disjoint family $\Pi=\{P_1,\ldots,P_\ell\}$, and for every $i\in A(M_2)\setminus A(M_1)$ there exist $P_i=\{i,i^*\}$ such that $$M_2=\sigma_{i_m| i_m}\circ\cdots\circ\sigma_{i_1| i_1}(M_1).$$
This concludes the proof in the case when $A(M_1)\subsetneq A(M_2)$ and 
the case  $A(M_2)\subsetneq A(M_1)$ is similar.

We now suppose that $A(M_1)\not\subset A(M_2)$ and $A(M_2)\not\subset A(M_1)$. 
The simplest sub-case is when $ A(M_2)\setminus A(M_1)=\{i\}$ and $ A(M_1)\setminus A(M_2)=\{j\}$. Since these matroids have degree 2, there is $k\in A(M_2)\cap A(M_1)$ with $i,j\neq k$.

 Since $i\in A(M_2)\setminus A(M_1)$ and $k\in A(M_2)\cap A(M_1)$, we have that $\{i,i^*\},\{k,k^*\}$ are bases of $M_2$, which means that $x_{\{i,i^*\}}x_{\{k,k^*\}}\neq0$ in the octahedral Grassmann-Pl\"ucker relation $\rho(\{i,i^*,k,k^*\})$, so there is at least another diagonal $D$ whose product is nonzero. 
On the other hand, $\{i,i^*\}$ is not a basis for $M_1$ (thus, $y_{\{i,i^*\}}y_{\{k,k^*\}}=0$) and the diagonal $D$ also appears in $M_1$. Hence, the Grassmann-Pl\"ucker relation $\rho(\{i,i^*,k,k^*\})$ implies that the third diagonal appears in $M_1$. Hence all three diagonals must appear in $M_2$.
Setting
 $\Pi(M_1)=\{P_a\}_a$ and $\Pi(M_2)=\{Q_b\}_b$, it follows that $i,i^*\in P_a$, $k\in P_k$ and $k^*\in P_{k^*}$, with $\#\{a,k,k^*\}=3$, and $i\in Q_i$, $i^*\in Q_{i^*}$, $k\in Q_k$ and $k^*\in Q_{k^*}$, with $\#\{i,i^*,k,k^*\}=4$.

Thus $P_a=\{i,i^*\}=\mu_{i\cup i^*}(Q_i=\{i\},Q_{i^*}=\{i^*\})$. Similarly we find $Q_b=\{j,j^*\}=\mu_{j\cup j^*}(P_j=\{j\},P_{j^*}=\{j^*\})$. Then the matroid 
$$\Pi(M_1)-P_a+Q_i+Q_{i^*}=\Pi(M_2)-Q_b+P_j+P_{j^*}$$ is a common refinement of $M_1$ and $M_2$.

The next simplest case is when $ A(M_2)\setminus A(M_1)=\{i,j\}$ and $ A(M_1)\setminus A(M_2)=\{k,\ell\}$ and $A(M_2)\cap A(M_1)=\emptyset$. We argue similarly: since $i,j\in A(M_2)\setminus A(M_1)$ means that $x_{\{i,i^*\}}x_{\{j,j^*\}}\neq0$ in the octahedral Pl\"ucker relation $\rho(\{i,i^*,j,j^*\})$, then there is at least another diagonal $D$ whose product is nonzero. 
On the other hand, these are not bases of $M_1$, (thus, $y_{\{i,i^*\}}y_{\{j,j^*\}}=0$) and the diagonal $D$ also appears in $M_1$, thus the Grassmann-Pl\"ucker relation $\rho(\{i,i^*,j,j^*\})$ implies that the third diagonal appears in $M_1$. Hence all three diagonals must appear in $M_2$. 
It follows that $i,i^*\in P_a$, $j,j^*\in P_b$  with $\#\{a,b\}=2$, and $i\in Q_i$, $i^*\in Q_{i^*}$, $j\in Q_j$ and $j^*\in Q_{j^*}$, with $\#\{i,i^*,j,j^*\}=4$. The general case is obtained
iterating this argument.
\end{proof} 

If $\Delta_N$ is a symplectic matroid polytope which is representable over $\mathbb{C}$, recall that $\pi^{-1}(\Delta_N)=\{\Delta_M\::\:\pi(\Delta_M)=\Delta_N\}$ is non-empty by  Corollary \ref{cor:sym_lift}. 
We say that $\Delta_M\in \pi^{-1}(\Delta_N)$ is \textit{maximal} if $\deg(M)$ is maximal. By Lemma \ref{lemma4.4a}, if $\pi^{-1}(\Delta_N)$ has a member of degree at least 2, then it contains a maximal member, which we denote by $\max(N)$.

\begin{theorem}[Irreducibility of the symplectic thin Schubert cells]\label{theorem3.15}
\ \\ 
\noindent{\rm (1)} For any $N\in M^2_{BC_{n}}(\mathbb{C})$ the corresponding thin Schubert cell $\text{\rm SpG}_N=\{x\in \text{\rm SpG}(2,2n)\::\:M(x))=N\}$   can be written as $$\text{\rm SpG}_N=\bigsqcup_M \left(G_M\cap {\mathcal H}\right)$$ for some $S_{2n}$-matroids $M$.  
From $\text{\rm SpG}(2,2n)=G(2,2n)\cap {\mathcal H}$
we obtain a decomposition, analogue of \eqref{eq1.0},
\begin{equation}\label{eq5.1a}
\text{\rm SpG}(2,2n)=\bigsqcup_{N\in M^2_{BC_{n}}({\mathbb C})}\text{\rm SpG}_N
\end{equation}
into a disjoint union of locally closed sets.\smallskip


\noindent{\rm (2)} There exist precisely 
one $M$ among them that satisfies
\[
\overline{\text{\rm SpG}_N}=\overline{G_M\cap {\mathcal H}}.
\]
In particular $\text{\rm SpG}_N$ is irreducible.
\end{theorem}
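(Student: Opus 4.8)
The plan is to unwind the decomposition $\text{SpG}_N = \bigsqcup_M (G_M \cap \mathcal{H})$ and analyze which $S_{2n}$-matroids $M$ appear. First I would observe that a type-$A$ matroid $M$ contributes $G_M \cap \mathcal{H} \ne \emptyset$ to $\text{SpG}_N$ precisely when $\pi(M) = N$ (in the combinatorial sense of Definition \ref{def:sympl_proj}, equivalently $\pi(\Delta_M) = \Delta_N$ by Proposition \ref{pro4.2}) \emph{and} $G_M \cap \mathcal{H} \ne \emptyset$; by Lemma \ref{lem:vacuum}(2) the latter fails exactly when $B(M)$ contains precisely one diagonal pair $\{j,j^*\}$, and by Lemma \ref{lem:vacuum}(1),(3) it holds when $B(M)$ has zero or $\geq 2$ such pairs. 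So the relevant index set is $\{M \in M^2_{S_{2n}} : \pi(M) = N,\ \deg(M) \ne 1\}$. Since $\text{SpG}_N$ is non-empty, this set is non-empty (Corollary \ref{cor:sym_lift} together with the fact that $x \in \text{SpG}_N$ gives $M'(x)$ in it with $\deg \ne 1$, as $x \in \mathcal{H}$ rules out degree $1$).

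The heart of the proof is to produce a single $M_0$ in this index set whose closed cell $\overline{G_{M_0} \cap \mathcal{H}}$ dominates all the others, i.e.\ $\overline{G_M \cap \mathcal{H}} \subseteq \overline{G_{M_0} \cap \mathcal{H}}$ for every other $M$ occurring. The natural candidate is $M_0 = \max(N)$, the maximal-degree lift supplied by Lemma \ref{lemma4.4a} (which applies once we know some lift has degree $\geq 2$; the case where all lifts have degree $0$ is handled separately — there is then a unique lift $M$, namely $N$ viewed as a type-$A$ matroid, and $\text{SpG}_N = G_M \cap \mathcal{H} = G_M$, trivially irreducible). For the degree-$\geq 2$ case, I would argue as follows: given any other $M$ in the index set with $\pi(M) = N = \pi(M_0)$, Lemma \ref{lemma4.4a} yields a common refinement $M'$ with $\mu_i(M') = M$ and $\mu_j(M') = M_0$ obtained by sequences of bag-mergings, and in fact by maximality of $M_0$ one can arrange that $M$ is itself a merging-degeneration of $M_0$ — merging bags only erases bases, so $G_M$ lies in the closure of $G_{M_0}$ in $G(2,2n)$, and this passes to the $\mathcal{H}$-sections: $G_M \cap \mathcal{H} \subseteq \overline{G_{M_0}} \cap \mathcal{H}$, and since $G_{M_0} \cap \mathcal{H}$ is a dense (by irreducibility, Lemma \ref{lemma:existence_and_irred}) open subset of $\overline{G_{M_0}} \cap \mathcal{H}$ away from the smaller strata, we get $G_M \cap \mathcal{H} \subseteq \overline{G_{M_0} \cap \mathcal{H}}$. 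Taking the union over all $M$ gives $\text{SpG}_N = \bigsqcup_M (G_M \cap \mathcal{H}) \subseteq \overline{G_{M_0} \cap \mathcal{H}}$, hence $\overline{\text{SpG}_N} = \overline{G_{M_0} \cap \mathcal{H}}$; irreducibility of $\text{SpG}_N$ then follows because $G_{M_0} \cap \mathcal{H}$ is irreducible by Lemma \ref{lemma:existence_and_irred}, so its closure is irreducible, and a set whose closure is irreducible is itself irreducible. Uniqueness of $M_0$: if $\overline{G_M \cap \mathcal{H}} = \overline{G_{M_0}\cap\mathcal{H}}$ for two lifts, then $G_M \cap \mathcal{H}$ and $G_{M_0}\cap\mathcal{H}$ are both dense in the same irreducible variety while the original cells $G_M, G_{M_0}$ are disjoint locally closed — one shows the open dense stratum of $\overline{G_{M_0}\cap\mathcal H}$ is $G_{M_0}\cap\mathcal H$ itself (a merging-degeneration always strictly drops dimension or lands in a smaller stratum), forcing $M = M_0$.

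The step I expect to be the main obstacle is justifying that $\overline{G_M \cap \mathcal{H}} \subseteq \overline{G_{M_0} \cap \mathcal{H}}$, i.e.\ that taking hyperplane sections is compatible with the closure relation $G_M \subseteq \overline{G_{M_0}}$ coming from bag-merging. The subtlety is that $\overline{G_{M_0}} \cap \mathcal{H}$ could a priori be strictly larger than $\overline{G_{M_0} \cap \mathcal{H}}$ if $G_{M_0} \cap \mathcal{H}$ were a \emph{proper} subvariety of $\overline{G_{M_0}}\cap\mathcal H$; here is where one uses Lemma \ref{lem:vacuum}(3) crucially — when $\deg(M_0) \geq 2$, $G_{M_0} \cap \mathcal{H}$ is a genuine hypersurface in $G_{M_0}$, so it is dense in $\overline{G_{M_0}} \cap \mathcal{H}$ (both being irreducible of the same dimension $\dim G_{M_0} - 1$, using that $\overline{G_{M_0}}\setminus G_{M_0}$ has smaller dimension and so its intersection with $\mathcal{H}$ cannot contribute a new component of full dimension). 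Combined with the reverse-inclusion subtlety for the erased pieces, making this dimension count airtight — and handling uniformly the mixed situation where different lifts $M$ have $\deg = 0$ versus $\deg \geq 2$ — is the technical crux; everything else is bookkeeping on non-intersecting families and the already-established fibration structure.
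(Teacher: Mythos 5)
Your proposal follows the paper's route: Part (1) via the decomposition of $G(2,2n)$ into thin Schubert cells and Lemma~\ref{lem:vacuum}, and Part (2) via the maximal lift $\max(N)$ from Lemma~\ref{lemma4.4a} together with the closure relation $G_U\subseteq\overline{G_{M_0}}$ for $U\le M_0$. You also correctly single out the closure step as the technical crux. However, the argument you offer there is flawed: you assert that $\overline{G_{M_0}}\cap\mathcal{H}$ is irreducible of dimension $\dim G_{M_0}-1$ because ``$\overline{G_{M_0}}\setminus G_{M_0}$ has smaller dimension and so its intersection with $\mathcal{H}$ cannot contribute a new component of full dimension.'' That inference fails. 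The boundary $\overline{G_{M_0}}\setminus G_{M_0}$ can have dimension exactly $\dim G_{M_0}-1$, and a boundary stratum $G_{M''}$ of that dimension with $\deg(M'')=0$ lies \emph{entirely} inside $\mathcal{H}$, furnishing a full-dimensional component of $\overline{G_{M_0}}\cap\mathcal{H}$ distinct from $\overline{G_{M_0}\cap\mathcal{H}}$. This happens as soon as one merge of two bags $P_a,P_b$ of $M_0$ kills all the diagonal bases simultaneously, which occurs precisely when every $i\in A(M_0)$ has $i\in P_a$, $i^*\in P_b$ (or vice versa). Concretely, for $n=3$ take $\Pi(M_0)=\{\{1,2\},\{1^*,2^*\},\{3\}\}$ with $3^*$ a loop, so $\deg M_0=2$ and $\dim G_{M_0}=4$; merging the first two bags yields $M''$ with $\Pi(M'')=\{\{1,2,1^*,2^*\},\{3\}\}$, $\deg M''=0$, $\dim G_{M''}=3$, and $G_{M''}\subset\mathcal{H}$. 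A direct parametrization shows $\overline{G_{M_0}\cap\mathcal{H}}\cap G_{M''}$ is $2$-dimensional, so $\overline{G_{M''}}$ is genuinely a separate component of $\overline{G_{M_0}}\cap\mathcal{H}$.

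The theorem is not endangered because such extraneous components $G_{M''}$ satisfy $\pi(M'')\neq N$ and therefore never lie in $\mathrm{SpG}_N$ -- but this is exactly the point your density argument skips. The strata $G_{M'}\cap\mathcal{H}$ that actually appear in the union for $\mathrm{SpG}_N$ are those with $\pi(M')=N$ and $\deg M'\neq 1$, and one checks that every such $M'<M_0$ is obtained from $M_0$ by merging only \emph{singleton} pairs $\{i\},\{i^*\}$ with $i\in A(M_0)$ (any other merge would erase an admissible base and change $N$). For these restricted degenerations the containment $G_{M'}\cap\mathcal{H}\subseteq\overline{G_{M_0}\cap\mathcal{H}}$ should be argued directly, e.g.\ by a local deformation: starting from $y\in G_{M'}\cap\mathcal{H}$, perturb the column $y_i$ so that the lost coordinate $x_{i,i^*}$ becomes $\ne 0$, and simultaneously adjust some other diagonal coordinate $x_{k,k^*}$ (available since $\deg M'\geq 2$, or $\deg M_0\geq 2$ in the degree-$0$ case) to keep the symplectic relation \eqref{symplectic_rel} satisfied. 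That is the missing argument; without it, or some substitute, the step from ``$G_{M'}\subseteq\overline{G_{M_0}}$'' to ``$G_{M'}\cap\mathcal{H}\subseteq\overline{G_{M_0}\cap\mathcal{H}}$'' is not justified, and the paper's own proof is equally terse at exactly this point.
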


\begin{proof}
    Part (1) follows from $\operatorname{Gr}(2,2n)=\bigsqcup_M G_M$ and
    $\operatorname{SpG}(2,2n)=\operatorname{Gr}(2,2n)\cap\mathcal H$ together with the fact that each $G_M\cap\mathcal{H}$ is either completely contained in $\operatorname{SpG}_N$ or does not intersect it.
   For part (2),    by Lemma \ref{lemma4.4a} there exist a maximal matroid $M=([2n],P)$ of degree zero or $\ge 2$ such that $M\cap J_n^2=N$. 
    Since $\overline{G_U}\subset\overline{G_M}$ whenever $U\le M$ and  the equation in (1), it follows that
$\overline{\operatorname{SpG}}_N = \overline{G_M\cap\mathcal H}$ for such $M$.
\end{proof}

We now prove one of our main results. 

\begin{theorem}[The set of $\mathbb{C}$-representable symplectic matroids of rank $2$]
\label{thm4.8}
    If $N$ is a symplectic matroid of rank $2$ over $E_n=[n]\cup [n^*]$, then $N$ is representable over ${\mathbb C}$ if and only if $N$ there exists an $S_{2n}$-matroid $M$ of degree $d\neq1$ such that $\pi(M)=N$.
\end{theorem}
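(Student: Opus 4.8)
The plan is to prove the two implications separately, leaning heavily on the structural results already established. First, the ``only if'' direction: suppose $N$ is representable over $\mathbb{C}$, so there is $x\in\text{SpG}(2,2n)$ with $M(x)=N$. Then $M'(x)$ is an $S_{2n}$-matroid which by Proposition \ref{pro4.2} satisfies $\pi(M'(x))=M(x)=N$; this is exactly the content of Corollary \ref{cor:sym_lift}. It remains to arrange that the lift has degree $d\neq 1$. If the degree of $M'(x)$ is $0$ or $\geq 2$ we are done. The problematic case is $\deg(M'(x))=1$, i.e.\ $x$ lies in a thin Schubert cell $G_M$ with exactly one pair $\{j,j^*\}\in B(M)$. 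But then Lemma \ref{lem:vacuum}(2) says $G_M\cap\mathcal{H}=\emptyset$, contradicting $x\in\text{SpG}(2,2n)=G(2,2n)\cap\mathcal{H}$. Hence no $x$ representing $N$ can have a degree-$1$ matroid, so every lift arising this way already has degree $\neq 1$, and in particular a lift $M$ with $\pi(M)=N$ and $\deg(M)\neq 1$ exists.

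Next, the ``if'' direction: suppose $M=([2n],B)$ is an $S_{2n}$-matroid with $\deg(M)=d\neq 1$ and $\pi(M)=N$. Every $S_{2n}$-matroid of rank $2$ is representable over $\mathbb{C}$ (Remark \ref{remark_representable}), so the thin Schubert cell $G_M$ is non-empty. Because $d\neq 1$, Lemma \ref{lem:vacuum}(1) (if $d=0$) or Lemma \ref{lem:vacuum}(3) (if $d\geq 2$) guarantees that $G_M\cap\mathcal{H}\neq\emptyset$. Pick any $x\in G_M\cap\mathcal{H}\subseteq\text{SpG}(2,2n)$. Then $M'(x)=M$ (since $x\in G_M$), and Proposition \ref{pro4.2} gives $M(x)=\pi(M'(x))=\pi(M)=N$. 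Thus $x$ is a $\mathbb{C}$-point of $\text{SpG}(2,2n)$ whose $BC_n$-matroid is $N$, i.e.\ $N\in M^2_{BC_n}(\mathbb{C})$.

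I expect the proof to be essentially immediate once Lemma \ref{lem:vacuum} and Proposition \ref{pro4.2} are in hand; the only point requiring a little care is reconciling the combinatorial notion of symplectic projection $\pi$ from Definition \ref{def:sympl_proj} (acting on matroids as non-intersecting families / base sets) with the polytope-level projection $\pi$ from Remark \ref{rem4.1}, but Proposition \ref{pro4.2} is stated precisely so that these two agree. The one genuine subtlety — and the closest thing to an obstacle — is making sure the statement is not vacuous when $N$ itself has empty admissible base set; however, the degenerate case $B(N)=\emptyset$ corresponds exactly to $B(M)=\{\{i,i^*\}\}$ for the unique degree-$1$ lift (by the Remark following Definition \ref{def:sympl_proj}), and such an $N$ is not a legitimate rank-$2$ symplectic matroid (it fails the maximality property), so the hypothesis ``$N$ is a symplectic matroid of rank $2$'' already excludes it and the equivalence holds as stated.
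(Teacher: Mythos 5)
Your proof is correct and takes essentially the same route as the paper: both directions go through Lemma \ref{lem:vacuum} and Proposition \ref{pro4.2}, with the forward direction lifting to $M'(x)$ and ruling out degree $1$ via Lemma \ref{lem:vacuum}(2), and the converse picking a point in $G_M\cap\mathcal{H}$. In fact your converse is a touch cleaner than the paper's, which cites Corollary \ref{cor:sym_lift} where Lemma \ref{lem:vacuum}(1)/(3) is really what guarantees $G_M\cap\mathcal{H}\neq\emptyset$.
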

\begin{proof}
    If $N$ is representable, then $\emptyset\neq \operatorname{SpG}_N\subseteq \operatorname{SpG}(2,2n)=G(2,2n)\cap{\mathcal H}$. Thus, there exists $x\in G(2,2n)\cap{\mathcal H}$ such that $x\in \operatorname{SpG}_N$, and   if $x\in G_{M}$, then $\pi(M)=N$ by Proposition \ref{pro4.2} and $\deg (M)\neq 1$ by Lemma \ref{lem:vacuum}.

Conversely, if $\pi(M)=N$ for some $S_{2n}$-matroid $M$ of degree $d\neq1$, then $G_M\cap{\mathcal H}\neq\emptyset$ for some symmetric matroid by Corollary \ref{cor:sym_lift} and $N=M(x)$ for any $x\in G_M\cap{\mathcal H}$.
\end{proof}

\begin{definition}
    Let $N$ be a $\mathbb{C}$-representable symplectic matroids of rank $2$. We say that a symmetric matroid $M$ is a {\it symmetric lifting} of $N$ if $\pi(M)=N$.
\end{definition}

\begin{example}\label{ejemplo:stab}
Assume $N=\{\{i,i^*\}, \{j,j^*\}\}$ and $M=\{\{i\}, \{i^*\}, \{j\}, \{j^*\}\}$ are two $S_{2n}^2$ matroids, with $i<j<j^*<i^*$. Then 
\[
B(N)=\{\{i,j\}, \{i,j^*\}, \{j,i^*\}, \{j^*,i^*\}\},
\] 
while
\[
B(M)=B(N)\cup\{\{i,i^*\}, \{j,j^*\}\}.
\]

It is not difficult to see that
\[
\operatorname{Stab}_{T'}(N) = \left\{ \diag(\lambda_1,\dots,\lambda_n,\lambda_{n^*},\dots, \lambda_{1^*}\} \, |\, \lambda_i\lambda_j=\lambda_i\lambda_{j^*} = \lambda_j\lambda_{i^*} = \lambda_{j^*}\lambda_{i^*}  \right\}
\] 
i.e, 
\[
\operatorname{Stab}_{T'}(N) = \left\{ \diag(\lambda_1,\dots,\lambda_n,\lambda_{n^*},\dots, \lambda_{1^*}\} \, |\, \lambda_i=\lambda_{i^*}\quad \text{and}\quad \lambda_j=\lambda_{j^*}\right\}.
\] 
In particular 
\[
\operatorname{Stab}_{T}(N) = \left\{ \diag(\lambda_1,\dots,\lambda_n,\lambda_{n}^{-1},\dots, \lambda_{1}^{-1}\} \, |\, \lambda_i=\pm 1\quad\text{and}\quad \lambda_j=\pm 1
\right\}.
\]

On the other hand
\[
\operatorname{Stab}_{T'}(M) = \left\{ \diag(\lambda_1,\dots,\lambda_n,\lambda_{n^*},\dots, \lambda_{1^*}\} \, |\, 
\quad \text{equation \ref{eq:2} is satisfied }
\right\}
\] 
where
\begin{equation}\label{eq:2}
\lambda_i\lambda_j=\lambda_i\lambda_{j^*} = \lambda_j\lambda_{i^*} = \lambda_{j^*}\lambda_{i^*} = \lambda_i\lambda_{i^*} = \lambda_j\lambda_{j^*},
\end{equation}
i.e, 
\[
\operatorname{Stab}_{T'}(M) = \left\{ \diag(\lambda_1,\dots,\lambda_n,\lambda_{n^*},\dots, \lambda_{1^*}\} \, |\, \lambda_i=\lambda_{i^*}=\lambda_j=\lambda_{j^*}\right\},
\] 
In particular 
\[
\operatorname{Stab}_{T}(M) = \left\{ \diag(\lambda_1,\dots,\lambda_n,\lambda_{n}^{-1},\dots, \lambda_{1}^{-1}\} \, |\, \lambda_i= \lambda_j=\pm 1
\right\}.
\]
Observe that in this case, $\operatorname{Stab}_T(M)<\operatorname{Stab}_T(N)$ is a subgroup of index 2, so that both have the same dimension and one has
an exact sequence
\[
0\to \operatorname{Stab}_T(N)/\operatorname{Stab}_T(M)\to T/\operatorname{Stab}_T(M) \to T/\operatorname{Stab}_T(N)\to 0
\]
\end{example}

In fact one has the following lemma:

\begin{lemma}\label{simplecticStab}
Assume $N$ is a $BC_n^2$ matroid which admits a degree zero lifting $M_0$ to an $S_{2n}^2$ matroid as well as a degree $r\ge 2$ lifting $M_r$ to an $S_{2n}^2$ matroid. Then $\operatorname{Stab}_T(M_r)<\operatorname{Stab}_T(M_0)$ is a subgroup of index at most two. 
\end{lemma}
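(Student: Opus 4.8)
The plan is to describe both stabilizers explicitly as subgroups of $T$ cut out by monomial equations, realize $\operatorname{Stab}_T(M_r)$ as the kernel of a character on $\operatorname{Stab}_T(M_0)$, and then bound the image of that character by $\{\pm1\}$ using an octahedral Grassmann--Pl\"ucker relation. Concretely, parametrize $T$ as in Section~\ref{sect_3.4} by $t=\diag(\mu_1,\dots,\mu_n,\mu_n^{-1},\dots,\mu_1^{-1})$, so that $t_ct_{c^*}=1$ for every $c\in E_n$. By Lemma~\ref{lemma3.7}, applied to the type-A matroids $M_0,M_r$ and intersected with $T$, an element $t\in T$ lies in $\operatorname{Stab}_T(M_i)$ if and only if $t_at_b$ takes one and the same value $\lambda\in\mathbb C^*$ over all bases $\{a,b\}\in B(M_i)$. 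Since $\deg(M_0)=0$, no basis of $M_0$ has the form $\{c,c^*\}$, so $B(M_0)=B(\pi(M_0))=B(N)=:B$, which is non-empty because $N$ is a genuine $BC_n$-matroid; and $B(M_r)=B\sqcup D$, where $D=\{\{c,c^*\}:c\in A(M_r)\}$ is the set of diagonal bases of $M_r$, a set which by the degree hypothesis contains at least two distinct pairs.

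Next I would set up the character and compute its kernel. If $t\in\operatorname{Stab}_T(M_r)$, with common value $\lambda$, then picking any $\{i,i^*\}\in D$ forces $\lambda=t_it_{i^*}=1$; hence $t_at_b=1$ for all $\{a,b\}\in B$, so $t\in\operatorname{Stab}_T(M_0)$, which already gives $\operatorname{Stab}_T(M_r)\subseteq\operatorname{Stab}_T(M_0)$. Conversely, since $B\ne\emptyset$, the assignment $\chi\colon\operatorname{Stab}_T(M_0)\to\mathbb C^*$, $\chi(t)=t_at_b$ for any fixed $\{a,b\}\in B$, is a well-defined group homomorphism (the value is independent of the chosen basis by definition of the stabilizer), and $\operatorname{Stab}_T(M_r)=\ker\chi$: indeed, if $\chi(t)=1$ then, using also $t_ct_{c^*}=1$ for all $c$, the element $t$ scales every Pl\"ucker coordinate indexed by $B(M_r)=B\cup D$ by the common value $1$. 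By the first isomorphism theorem $\operatorname{Stab}_T(M_0)/\operatorname{Stab}_T(M_r)\cong\chi(\operatorname{Stab}_T(M_0))\le\mathbb C^*$, so it remains to show $\chi(t)^2=1$ for every $t\in\operatorname{Stab}_T(M_0)$.

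For this last point I would pick two distinct pairs $\{i,i^*\},\{k,k^*\}\in D$; distinctness forces $i\notin\{k,k^*\}$, so $i,i^*,k,k^*$ are four distinct labels. Choose any point $x$ of the thin Schubert cell $G_{M_r}$ (non-empty, since every rank-$2$ type-A matroid is representable by Remark~\ref{remark_representable}). Then $x_{i,i^*}x_{k,k^*}\ne0$, so the octahedral Grassmann--Pl\"ucker relation $\rho(\{i,i^*,k,k^*\})$, namely $x_{i,i^*}x_{k,k^*}\pm x_{i,k}x_{i^*,k^*}\pm x_{i,k^*}x_{i^*,k}=0$, forces $x_{i,k}x_{i^*,k^*}\ne0$ or $x_{i,k^*}x_{i^*,k}\ne0$; say the former (the other case being symmetric). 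Then $\{i,k\}$ and $\{i^*,k^*\}$ are bases of $M_r$, and neither is a diagonal (again by $i\notin\{k,k^*\}$), so both lie in $B(\pi(M_r))=B(N)=B$. Hence for $t\in\operatorname{Stab}_T(M_0)$ we get $\chi(t)=t_it_k$ and $\chi(t)=t_{i^*}t_{k^*}$, and multiplying these gives $\chi(t)^2=(t_it_{i^*})(t_kt_{k^*})=1$. Therefore $\chi(\operatorname{Stab}_T(M_0))\subseteq\{\pm1\}$ and $[\operatorname{Stab}_T(M_0):\operatorname{Stab}_T(M_r)]\le2$, as asserted.

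The step I expect to be the main obstacle is precisely this last one: pinning down, from the degree hypothesis on $M_r$, two genuinely distinct diagonal bases, and then invoking the octahedral Pl\"ucker relation to certify that one of the two ``conjugate'' pairs $\{i,k\},\{i^*,k^*\}$ or $\{i,k^*\},\{i^*,k\}$ actually survives to $N$. This is the same mechanism used in the proof of Lemma~\ref{lemma4.4a}, so it is available, but it genuinely uses that $M_r$ carries two such pairs — which is also why Lemma~\ref{lem:vacuum} has to treat the single-pair case separately.
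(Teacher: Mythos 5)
Your proof is correct and takes a genuinely different, more conceptual route than the paper. The paper argues by an explicit three-case analysis on the combinatorial shape of $N$: Case~I ($N$ consists of two diagonal pairs) is delegated to Example~\ref{ejemplo:stab} where the index is exactly $2$; in Cases~II and~III the paper solves the monomial equations cutting out $\operatorname{Stab}_{T'}(N)$ directly and observes that the extra equations imposed by $M$ become redundant, so the index is in fact $1$. Your argument instead avoids the case split by packaging the comparison into a single character $\chi(t)=t_at_b$ on $\operatorname{Stab}_T(M_0)$ with $\operatorname{Stab}_T(M_r)=\ker\chi$, and then bounding $\chi(\operatorname{Stab}_T(M_0))\subseteq\{\pm1\}$ via the octahedral Grassmann--Pl\"ucker relation (the same mechanism the paper uses in Lemma~\ref{lemma4.4a}), together with the torus identity $t_ct_{c^*}=1$. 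What your approach buys is uniformity and brevity, and a cleaner structural picture ($\operatorname{Stab}_T(M_r)$ as the kernel of a quadratic character); what the paper's approach buys is the finer statement that the index is $1$ except in the degenerate two-diagonal case, which your argument does not extract. Both are valid proofs of the lemma as stated.
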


\begin{proof}
\begin{itemize}
\item[I.] If $N=\{\{i,i^*\}, \{j,j^*\}\}$, then it is Example \ref{ejemplo:stab} and there is nothing to prove. \\
\item[II.] Assume $A(M) = \{j_1,\dots, j_s\}$ and
$N=\{ Q_1, \dots , Q_r\}$, with $r\ge 3$, $r\ge s$, $s\ge 2$ and $Q_\ell=\{j_\ell, j_\ell^*\}$ for $1\le \ell\le r$. Then
$M=\sigma|_{j_s,j_s}\circ\dots\circ\sigma|_{j_1,j_1}(N)$,
\begin{align*}
B(N)  =  \left\{ \{j_\ell, j_k\}, \{j_\ell, j_k^*\}, \{j_k, j_\ell^*\}, \{j_k^*, j_\ell^*\} \,|\, 1\le j<k\le r
\right\}
\end{align*}
while
\[
B(M)=B(N)\cup\left\{ \{j_\ell,j_\ell^*\}\, |\, 1\le \ell\le s\right\}.\hskip3.3cm{} \text{ }
\]
We claim that $\diag(\lambda_1,\dots, \lambda_n,\lambda_n^*,\dots,\lambda_1^*)\in \operatorname{Stab}_{T'}(N)$ if and only if
\begin{equation}\label{eq:stabT'}
\lambda_{j_1}=\lambda_{j_1^*}=\dots = \lambda_{j_r}=\lambda_{j_r^*}
\end{equation}
which implies that $\diag(\lambda_1,\dots, \lambda_n,\lambda_n^{-1},\dots,\lambda_1^{-1})\in \operatorname{Stab}_{T}(N)$ if and only if
\begin{equation}\label{eq:stabT}
\lambda_{j_1}=\lambda_{j_1^{-1}}=\dots = \lambda_{j_r}=\lambda_{j_r^{-1}}=\pm 1
\end{equation}

To simplify notation, we will write down the proof of Equation \eqref{eq:stabT'} when $r=3$, the general case is proven exactly the same way.

When $r=3$, $\diag(\lambda_1,\dots, \lambda_n,\lambda_n^*,\dots,\lambda_1^*)\in \operatorname{Stab}_{T'}(N)$ if and only if
\begin{align*}
\lambda_{j_1}\lambda_{j_2}  & = \lambda_{j_1}\lambda_{j_2^*} = \lambda_{j_1}\lambda_{j_3}  =\lambda_{j_1}\lambda_{j_3^*}  \cr
& = \lambda_{j_1^*}\lambda_{j_2}  =\lambda_{j_1^*}\lambda_{j_2^*} = \lambda_{j_1^*}\lambda_{j_3}  =\lambda_{j_1^*}\lambda_{j_3^*}  \cr
& = \lambda_{j_2}\lambda_{j_3}  = \lambda_{j_2}\lambda_{j_3^*} = \lambda_{j_2^*}\lambda_{j_3}  =\lambda_{j_2^*}\lambda_{j_3^*} 
\end{align*}
which implies (and in fact is equivalent to)
\[
\lambda_{j_2}=\lambda_{j_2^*}=\lambda_{j_3} = \lambda_{j_3^*}=\lambda_{j_1}=\lambda_{j_1^*},
\]
thus proving Equation \eqref{eq:stabT'}.
In particular,  $\diag(\lambda_1,\dots, \lambda_n,\lambda_n^{-1},\dots,\lambda_1^{-1})\in \operatorname{Stab}_{T}(N)$ if and only if
\[
\lambda_{j_1}=\lambda_{j_1^{-1}}=\dots = \lambda_{j_r} = \lambda_{j_r^{-1}} =\pm 1.
\]
Observe that the additional conditions 
\[
\lambda_{j_1}\lambda_{j_1^*} = \dots = \lambda_{j_s}\lambda_{j_s^*} = \lambda_{j_1}\lambda_{j_2}=\text{ etc.},
\] 
impose to
a matrix $\diag(\lambda_1,\dots, \lambda_n,\lambda_n^*,\dots,\lambda_1^*)$ in order to belong to $\operatorname{Stab}_{T}(M)$ are
automatically satisfied if Equation (\ref{eq:stabT'}) is true, therefore in this case one has $\operatorname{Stab}_{T}(N)=\operatorname{Stab}_T(M)$.
\item[III.] Assume $A(M) = \{j_1,\dots, j_r\}$ and $N=\{P_1,\dots,P_s,Q_1,\dots,Q_r\}$, with $r\ge 2$, 
$s\ge 1$ and $Q_\ell=\{j_\ell, j_\ell^*\}$ for $1\le \ell\le r$. Then
\[
M=\sigma_{j_r|j_r}\circ\dots\circ\sigma_{j_1|j_1}(N),
\] 
\begin{align*}
B(N) & =\left\{ \{m,n\}\, |\, m\in P_a, n\in P_b\quad\text{for some } 1\le a,b \le s; a\ne b\right\}\cr
& \cup \left\{\{t,j\} \, |\, t\in P_k, j\in A(M)\quad\text{for some } 1\le k\le s \right\} \cr
& \cup \left\{\{t,j^*\} \, |\, t\in P_k, j\in A(M)\quad\text{for some } 1\le k\le s\right\} \cr
& \cup \left\{ \{j_a, j_b\} \,|\, \text{for } 1\le a<b\le r\right\}  \cup \left\{ \{j_a, j_b^*\} \,|\, \text{for } 1\le a,b\le r, a\ne b\right\} \cr
& \cup \left\{ \{j_b^*, j_a^*\} \,|\, \text{for } 1\le a<b\le r\right\}
\end{align*}
and
\[
B(M)=B(N)\cup\left\{\{j,j^*\}\,|\, j\in A(M)\right\}.
\]

We claim that $\diag(\lambda_1,\dots,\lambda_n,\lambda_{n^*}, \dots, \lambda_{1^*})\in \operatorname{Stab}_{T'}(N)$ if and only if
\begin{equation}\label{eq:stabT'gen}
\lambda_t = \lambda_{j_\ell} = \lambda_{j_\ell^*}
\end{equation}
for all $t\in P_k$, for all $1\le k\le s$ and for all $1\le \ell\le r$. In particular, this implies that $\diag(\lambda_1,\dots,\lambda_n,\lambda_{n^*}, \dots, \lambda_{1^*})\in \operatorname{Stab}_{T}(N)$ if and only if
\begin{equation}\label{eq:stabTgen} 
\lambda_t = \lambda_{j_\ell} = \lambda_{j_\ell^*} =\pm 1.
\end{equation}
for all $t\in P_k$, for all $1\le k\le s$ and for all $1\le \ell\le r$.
\\

Once more, the additional conditions imposed to be in $\operatorname{Stab}_{T'}(M)$ are automatically satisfied if Equation \eqref{eq:stabT'gen} is satisfied and therefore
in this case one also has $\operatorname{Stab}_{T}(M)=\operatorname{Stab}_{T}(N)$.

Again, to simplify notation we will write down the proof when $s=1$, $r=2$ and $P_1=\{t\}$, and the general case is proven exactly the same way.

Under this simplified assumption, $\diag(\lambda_1,\dots,\lambda_n,\lambda_{n^*}, \dots, \lambda_{1^*})\in \operatorname{Stab}_{T'}(N)$ if and only if
\[
\lambda_t\lambda_{j_1} = \lambda_t\lambda_{j_1^*}= \lambda_t\lambda_{j_2} = \lambda_t\lambda_{j_2^*} =\lambda_{j_1}\lambda_{j_2} =\lambda_{j_1}\lambda_{j_2^*} = \lambda_{j_1^*}\lambda_{j_2} =\lambda_{j_1^*}\lambda_{j_2^*} 
\]
Or equivalently, if and only if
\[
\lambda_{j_1}= \lambda_{j_1^*} = \lambda_{j_2} = \lambda_{j_2^*} = \lambda_t
\]
as claimed, hence proving Equation \eqref{eq:stabT'gen} and henceforth also Equation \eqref{eq:stabTgen} and the lemma. \qedhere
\end{itemize}
\end{proof}

\begin{corollary}\label{cor4.14}
For any matroid $N\in BC_n^2$ and any $x,y\in \operatorname{SpG}_N$ we have $\dim T\cdot x = \dim T\cdot y$, in particular the action of $T$ on $\operatorname{SpG}_N$
is closed.\hskip3cm{$\square$}
\end{corollary}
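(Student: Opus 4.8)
The plan is to leverage the stabilizer computations that precede the statement, together with the stratification of $\operatorname{SpG}_N$ from Theorem~\ref{theorem3.15}. Recall that for $x\in\operatorname{SpG}_N$ we have $\dim T\cdot x = \dim T - \dim\operatorname{Stab}_T(x)$, so it suffices to show that $\dim\operatorname{Stab}_T(x)$ is constant on $\operatorname{SpG}_N$. By Theorem~\ref{theorem3.15}(1), $\operatorname{SpG}_N = \bigsqcup_M (G_M\cap\mathcal H)$ where $M$ runs over the $S_{2n}$-liftings of $N$, and each such $M$ has degree $0$ or degree $\ge 2$ by Lemma~\ref{lem:vacuum}. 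For a point $x$ lying in $G_M\cap\mathcal H$, Lemma~\ref{lemma3.7} gives $\operatorname{Stab}_T(x)=\operatorname{Stab}_T(M)$ in the degree-$0$ case (since then $G_M\cap\mathcal H = G_M$), and in the degree-$\ge 2$ case the stabilizer of $x$ in $T$ still equals $\operatorname{Stab}_T(M)$: the freeness of the $T_M$-action on $G_M\cap\mathcal H$ used in Lemma~\ref{lemma:existence_and_irred} means $\operatorname{Stab}_T(x) = \operatorname{Stab}_T(M)$ for every $x\in G_M\cap\mathcal H$. So on each stratum the dimension of $T\cdot x$ is the constant $\dim T - \dim\operatorname{Stab}_T(M)$.

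Next I would compare these constants across the different strata. If $N$ admits a degree-$0$ lifting $M_0$ at all, then by Lemma~\ref{simplecticStab} every degree-$r\ge 2$ lifting $M_r$ satisfies $\operatorname{Stab}_T(M_r) < \operatorname{Stab}_T(M_0)$ with index at most two, hence $\dim\operatorname{Stab}_T(M_r) = \dim\operatorname{Stab}_T(M_0)$; thus all strata give the same value $\dim T\cdot x$. If $N$ admits no degree-$0$ lifting, then by Theorem~\ref{thm4.8} (representability) $N$ still has at least one lifting of degree $\ge 2$, and one argues as in Lemma~\ref{simplecticStab}: any two liftings $M, M'$ of degrees $\ge 2$ have a common refinement (Lemma~\ref{lemma4.4a}) of degree $\ge 2$ whose $T$-stabilizer has the same dimension as those of $M$ and $M'$ (the index-$\le 2$ phenomenon again, since passing from a lifting to one obtained by splitting a diagonal bag $\{j,j^*\}$ changes the stabilizer conditions only by the relations $\lambda_j\lambda_{j^*}=\lambda_i\lambda_j$, which—once all the $\lambda$'s over bags meeting a diagonal are forced equal—are automatic up to sign). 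Hence again the constant is independent of the stratum, and $\dim T\cdot x$ is the same for all $x\in\operatorname{SpG}_N$.

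Finally, the closedness of the action follows from the constancy of orbit dimension: an algebraic torus action on a variety has all orbits closed in a $T$-invariant subset as soon as the orbit dimension is constant there, because orbit closures are unions of orbits and any orbit in the boundary of $T\cdot x$ would have strictly smaller dimension. Concretely, $T\cdot x$ is open in its closure $\overline{T\cdot x}$, the complement $\overline{T\cdot x}\setminus T\cdot x$ is $T$-invariant of strictly smaller dimension and so consists of orbits of dimension $<\dim T\cdot x$; but all such boundary orbits would lie in $\operatorname{SpG}_N$ only if they had $M$-matroid equal to $N$, which forces their dimension to equal $\dim T\cdot x$—a contradiction—so the boundary must be empty and $T\cdot x$ is closed in $\operatorname{SpG}_N$.

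I expect the main obstacle to be the case analysis for matroids $N$ with no degree-$0$ lifting: Lemma~\ref{simplecticStab} is stated only for an $N$ that simultaneously admits a degree-$0$ and a degree-$\ge 2$ lifting, so one must either extend its proof to compare two degree-$\ge 2$ liftings directly (via the common refinement from Lemma~\ref{lemma4.4a} and the same "automatically satisfied extra relations" computation), or check that whenever $N$ has no degree-$0$ lifting all its liftings of degree $\ge 2$ are already related by single diagonal-splits so that the index-$\le 2$ bound applies iteratively. Making that step clean, while keeping track of the $\pm 1$ torsion that accounts for the index two, is the delicate part; the rest is bookkeeping with the stratification and a standard fact about torus orbit closures.
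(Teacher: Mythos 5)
Your overall strategy is the paper's: reduce to showing $\dim\operatorname{Stab}_T(x)$ is constant on $\operatorname{SpG}_N$, use the stratification $\operatorname{SpG}_N=\bigsqcup_M(G_M\cap\mathcal H)$ together with the fact that the stabilizer is constant on each $G_M\cap\mathcal H$, and then invoke Lemma~\ref{simplecticStab} to compare the strata. The paper records the corollary with a box because it regards this comparison as immediate from Lemma~\ref{simplecticStab}, and your within-stratum reduction and your closedness argument at the end are both correct.

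Where you go beyond the paper is in flagging that Lemma~\ref{simplecticStab} is stated only for an $N$ that admits \emph{both} a degree-$0$ and a degree-$\ge 2$ lifting, while a representable $N$ may have no degree-$0$ lifting at all. That concern is genuine: for $n=2$, the $BC_2$-matroid $N$ with $B(N)=\{\{1,2\},\{1,2^*\},\{1^*,2^*\}\}$ (the projection of the $S_4$-matroid with partition $\{\{1\},\{1^*,2\},\{2^*\}\}$, of degree $2$) has no degree-$0$ symmetric lifting, so the lemma as stated does not literally apply. You propose to plug the gap by running the common-refinement machinery of Lemma~\ref{lemma4.4a} and iterating the index-$\le 2$ bound. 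That route works, but it is heavier than needed and the ``automatic up to sign'' step is not spelled out. The cleaner observation, already implicit in the stabilizer computations of Lemma~\ref{lemma3.7} and Lemma~\ref{simplecticStab}, is that on the symplectic torus $T$ one always has $t_i t_{i^*}=1$; so as soon as some diagonal basis $\{i,i^*\}$ appears, the common scalar $\lambda$ in the stabilizer condition is forced to be $1$, and the remaining diagonal conditions are vacuous. Hence for \emph{every} lifting $M$ of degree $\ge 1$ one gets the same subgroup $\operatorname{Stab}_T(M)=\{t\in T : t_j t_k=1\text{ for all }\{j,k\}\in B(N)\}$, depending only on $N$. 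Thus two degree-$\ge 2$ strata have literally equal $T$-stabilizers (no refinement or index counting required), and the only nontrivial comparison is degree $0$ versus degree $\ge 2$, which is exactly the case Lemma~\ref{simplecticStab} covers. With that replacement your proof closes cleanly and is, in substance, the paper's.
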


\begin{corollary}\label{cor4.15}
For any matroid $N\in BC_n^2$, there exist an irreducible, universal geometric quotient $\mathcal{S}pG_N=\operatorname{SpG}_N/T$, which may be a non-separated scheme. 
\end{corollary}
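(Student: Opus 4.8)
The plan is to obtain the quotient $\mathcal{S}pG_N = \operatorname{SpG}_N/T$ as a geometric quotient by combining the irreducibility of $\operatorname{SpG}_N$ from Theorem \ref{theorem3.15}(2) with the closedness of the $T$-action from Corollary \ref{cor4.14}. First I would recall the classical criterion (as used already in \cite{EFG} for the symmetric case): a torus acting on a quasi-affine or quasi-projective variety with all orbits closed and of the same dimension admits a geometric quotient which is a universal categorical quotient, constructed by gluing the affine GIT quotients of a $T$-invariant affine open cover. The role of Corollary \ref{cor4.14} is precisely to guarantee that the hypotheses of this criterion are met on $\operatorname{SpG}_N$: the stabilizers have locally constant (hence, by irreducibility, constant) dimension, so the orbits are closed in $\operatorname{SpG}_N$ and all of the same dimension.

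The key steps, in order, are: (i) reduce to the case where $\operatorname{SpG}_N = G_M \cap \mathcal{H}$ for the maximal lifting $M$ of $N$ provided by Theorem \ref{theorem3.15}(2), since a geometric quotient of an open dense piece extends — more carefully, by part (1) of that theorem $\operatorname{SpG}_N$ is a disjoint union of locally closed pieces $G_M \cap \mathcal{H}$, and on each piece the torus $T_M = T/\stab_T(M)$ acts freely (Lemma \ref{lemma3.7} / Lemma \ref{lemma:existence_and_irred}), so the geometric quotient $(G_M\cap\mathcal H)/T$ exists as the base of the fibration in Theorem \ref{thm3.14}; (ii) check that these quotients of the strata glue: the point is that the full action of $T$ (not just $T_M$) on $\operatorname{SpG}_N$ has constant orbit dimension by Corollary \ref{cor4.14}, so no orbit in a smaller stratum degenerates in the closure, and the categorical quotient glues the affine pieces correctly; (iii) deduce irreducibility of $\mathcal{S}pG_N$ from irreducibility of $\operatorname{SpG}_N$ (Theorem \ref{theorem3.15}(2)) together with surjectivity of the quotient map, since the continuous image of an irreducible space is irreducible; (iv) point out that since $\operatorname{SpG}_N$ need not be separated as arising from a stratification glued along non-open pieces — or rather since the strata $G_M\cap\mathcal H$ for the various liftings $M$ are glued along locally closed loci with possibly different stabilizer dimensions at the level of the ambient $T'$ — the resulting scheme $\mathcal{S}pG_N$ may fail to be separated, exactly as remarked after Lemma \ref{simplecticStab} where $\stab_T(M_r) < \stab_T(M_0)$ is a proper subgroup of index two while both have the same dimension.

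The main obstacle I expect is step (ii): verifying that the geometric quotients of the individual strata $(G_M\cap\mathcal H)/T$ genuinely patch into a single scheme structure on $\operatorname{SpG}_N/T$, rather than merely a set-theoretic quotient. This requires knowing that $T$-invariant functions separate $T$-orbits locally, which is where Corollary \ref{cor4.14} (closedness of the action) does the real work via the fundamental theorem of GIT for torus actions with closed orbits; the subtlety is that across strata the stabilizer $\stab_T(M)$ jumps by a finite group (Lemma \ref{simplecticStab}), so one gets an honest geometric quotient only up to this finite ambiguity, which is what forces the scheme to possibly be non-separated and which must be handled by working with the $T$-action directly rather than stratum-by-stratum with the free $T_M$-actions. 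Once this local structure is in place, universality of the geometric quotient follows formally from the universal property of the affine GIT quotients on the cover, and irreducibility is immediate from Theorem \ref{theorem3.15}.
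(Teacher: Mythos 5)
Your first paragraph already contains the paper's entire argument: the closedness of the $T$-action on $\operatorname{SpG}_N$ (Corollary \ref{cor4.14}) together with the existence of a $T$-invariant affine open cover yields pre-stability of every point, and hence a universal geometric quotient by standard GIT. The paper states exactly this, citing Sumihiro \cite{S} for the $T$-invariant affine cover and Mumford \cite{M} for the notion of pre-stable point and the resulting geometric quotient (possibly non-separated), with irreducibility coming from Theorem~\ref{theorem3.15}. So the core of your proposal matches the paper.

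However, the detour you insert in steps (i)--(ii) is both unnecessary and contains a real gap. You cannot ``reduce to the case where $\operatorname{SpG}_N = G_M\cap\mathcal H$ for the maximal lifting $M$'' on the grounds that ``a geometric quotient of an open dense piece extends'': a geometric quotient of an open dense $T$-invariant subset does \emph{not} automatically extend to a geometric quotient of the whole space (this is precisely the kind of orbit-degeneration phenomenon that pre-stability is designed to exclude). Likewise, the stratum-by-stratum construction with the free $T_M$-actions, which you then try to patch, is the wrong level of generality --- as you yourself observe at the end, the stabilizer jumps between strata make this hard, and one ``must'' work with the $T$-action directly. The fix is simply not to stratify at all: Sumihiro applies to $\operatorname{SpG}_N$ as a single (irreducible) $T$-variety, and Corollary~\ref{cor4.14} guarantees the orbits in each resulting $T$-invariant affine open set are closed of constant dimension, hence every point is pre-stable and the affine quotients glue. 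This is the paper's one-line proof, and it avoids both the false reduction step and the gluing worry entirely. Irreducibility then follows because the geometric quotient map is surjective and $\operatorname{SpG}_N$ is irreducible; non-separatedness is permitted because Mumford's construction of the geometric quotient from a $T$-invariant affine cover does not guarantee separatedness without an additional stability hypothesis.
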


\begin{proof}
By Sumihiro \cite{S} and Corollary \ref{cor4.14}, any $x\in \operatorname{SpG}_N$ is pre-stable (in the sense of Mumford \cite{M}), therefore the quotient $\mathcal{S}pG_N=\operatorname{SpG}_N/T$ exists and is an irreducible, universal geometric quotient. \hskip3cm{}
\end{proof}

Notation: If $M\in M_{S_{2n}}^2$ is the unique symmetric lifting of $N\in B_{n}^2$ satisfying
\[
\overline{\operatorname{SpG}_N} = \overline{G_M\cap \mathcal H},
\]
as in Theorem \ref{theorem3.15}, we will write $T_N:= T/\operatorname{Stab}T(M)$.

Let $M$ be a symmetric matroid such that $B(M)\neq \{\{i,i^*\}\}$ for some $i\in [n]$. Then  its symplectic projection $\pi (M)$ (see Definition \ref{def:sympl_proj}) is a non-empty set. Furthermore, we know by Lemma \ref{lem:vacuum} that if $\deg(M)\neq1$, then $\pi (M)$ is a $\mathbb{C}$-representable symplectic matroid. 

Conversely, if $N$ is a $\mathbb{C}$-representable symplectic matroid of rank $2$ over $E_n=[n]\cup [n^*]$, we have three cases:
\begin{enumerate}
    \item $N$ admits only a symmetric lifting of degree $d<1$,
    \item $N$ only admits  symmetric liftings of degree $d>1$,
    \item $N$ admits a symmetric lifting of degree $d<1$ and one of degree $d>1$.
\end{enumerate}

\begin{definition}
\label{def:trichotomy}
    Let $N$ be a $\mathbb{C}$-representable symplectic matroids of rank $2$. We define its set of normal (canonical) symmetric liftings $M_{{S_{2n}}}^2\downarrow N$ as follows: 
    \begin{enumerate}
    \item If $N$ admits only a symmetric lifting of degree $d<1$, then $M_{{S_{2n}}}^2\downarrow N=\{N\}$,
    \item If $N$ only admits symmetric liftings of degree $d>1$, then $M_{{S_{2n}}}^2\downarrow N=\{\max(N)\}$, where $\max(N)$ is the maximal symmetric matroid projecting to $N$ as in Theorem \ref{theorem3.15},
    \item If $N$ admits a symmetric lifting of degree $d<1$ and one of degree $d>1$, then $M_{{S_{2n}}}^2\downarrow N=\{N, \max(N)\}$.
\end{enumerate}
\end{definition}

\begin{remark}
The set of normal symmetric liftings $M_{{S_{2n}}}^2\downarrow N$ of the $\mathbb{C}$-representable symplectic matroids of rank two  gives a novel way, different from Definition \ref{Def_BC_nM}, for expressing these objects as symmetric matroids, that is, as pairs $(E_n,P)$ where $P$ is  a non-intersecting family of $E_n$. 
In particular, this new description should help 
 to describe the set $\tfrac{M^2_{BC_{n}}({\mathbb C})}{BC_{n}}$ of equivalence classes of $\mathbb{C}$-representable $BC_n$-matroids of rank 2, which is important for the understanding of the cycle map (see Corollary \ref{cor:factorization_cycle_map}). However, we do not know what happens for the set $M^2_{BC_n}\setminus M^2_{BC_n}(\mathbb{C})$ of symplectic matroids which are not $\mathbb{C}$-representable. The next example is illustrative.
\end{remark}

\begin{example}\label{example4.9a}
 Let $N$ be the  symplectic projection of the symmetric matroid $M=\big([3^*]\cup [3], \{122^*\}\{1^*33^*\}\big)$ of degree 1. Then, $N$  is a symplectic matroid by \cite{BGW} and it does not admit a lifting of degree $d\neq 1$. Therefore, it is not representable.
\end{example}
  
This example  raises the following questions: What happens for the symplectic matroids which are not $\mathbb{C}$-representable? Can all of them be obtained as the symplectic projection $\pi (M)$ of a symmetric matroid?
Assuming that $\deg(M)=1$, is it true that $\pi(M)$ is a symplectic matroid?

\section{$T$-invariant schematic points}\label{sec5}

 For each $N\in M^2_{BC_{n}}({\mathbb C})$, by Theorem \ref{theorem3.15} the symplectic thin Schubert cell $\text{SpG}_N$ is an irreducible $T$-invariant space.  
 By Corollary \ref{cor4.15}
 the universal geometric quotient $\text{Sp}\mathcal{G}_N=\text{SpG}_N/T$ exists and it is irreducible, and by Corollary \ref{cor4.14} all fibers have the same dimension, hence we can compute the dimension of the symplectic thin Schubert cell $\operatorname{SpG}_N$. 
 
 \begin{definition}
     Let $M=(E,\Pi)$ be a matroid with $\Pi=\{P_1,\ldots,P_\ell\}$. The length of $M$ is $\ell(M)=\ell$, and the weight of $M$ is $w(M)=\#(P_1\cup\cdots\cup P_\ell)$.
 \end{definition}
 


\begin{theorem}
Let $N$ be a $\mathbb{C}$-representable symplectic matroids of rank $2$. 
Then, $\dim \text{\rm SpG}_N=\dim T - \dim \operatorname{Stab_T(M)}+\dim \text{\rm Sp}\mathcal{G}_N$, where $M=(E_{n},\Pi)$ is a symmetric lifting of $N$ of maximal degree. Moreover, we have 
\begin{equation*}
   \dim \text{\rm SpG}_N=\begin{cases}
    0+0,& \text{if } \ell(P)=2,\\
    [w(M)-1]+[\ell(M)-3],&\text{if }\ell(M)>2, \deg(M)<1,\\
    [w(M)-\deg(M)]+[\ell(M)+\deg(M)-5],&\text{if }\ell(M)>2, \deg(M)>1,\\
   \end{cases}
\end{equation*}
\end{theorem}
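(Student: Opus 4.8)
The plan is to reduce the dimension computation for $\operatorname{SpG}_N$ to the already-known dimension formulas for the symmetric thin Schubert cells $G_M$ together with the fibration results of Section \ref{sec3}. The starting point is the identity $\dim\operatorname{SpG}_N=\dim T-\dim\operatorname{Stab}_T(M)+\dim\operatorname{Sp}\mathcal G_N$: this follows from Corollary \ref{cor4.15}, which gives the universal geometric quotient $\operatorname{Sp}\mathcal G_N=\operatorname{SpG}_N/T$, and Corollary \ref{cor4.14}, which guarantees that all $T$-orbits in $\operatorname{SpG}_N$ have the same dimension, namely $\dim T-\dim\operatorname{Stab}_T(M)$ where $M$ is the maximal-degree symmetric lifting (using Lemma \ref{simplecticStab} and the notation $T_N=T/\operatorname{Stab}_T(M)$ so that this orbit dimension is well defined). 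So it remains to identify $\dim\operatorname{Sp}\mathcal G_N$ with $\dim\mathcal G_{M,\mathcal H}$ — which by Theorem \ref{theorem3.15}(2) equals the dimension of $\overline{G_M\cap\mathcal H}$ mod $T_M$ — and then to feed in the explicit dimensions.

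Next I would split into the three cases according to $\ell(M)$ and $\deg(M)$. When $\ell(P)=2$ the matroid $N$ has exactly two bags, $\operatorname{SpG}_N$ is a single $T$-fixed point (one of the $2n(n-1)$ points $[p_{i,j}]$ when $\deg=0$, or handled by Example \ref{ejemplo:stab} when $\deg=2$), so both summands vanish; this is the base case. For $\ell(M)>2$, I recall from Section \ref{subsec3.3} and \cite{EFG} that $\dim_{\mathbb C}G_M$ and hence $\dim\mathcal G_M$ is governed by $w(M)$ and $\ell(M)$: indeed for a loopless matroid $M(\pi)$ with partition data, $\dim G_M$ relates to $w(M)-1$ (the ambient hypersimplex direction) and $\dim\mathcal G_M=\ell(M)-3$ after quotienting by the big torus $T'_M$ of dimension $w(M)-1$ and the relevant projective normalization (this is exactly the content making $\mathcal G_M$ a $\overline{\mathcal M}_{0,\ell(M)}$-type space). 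In the degree $d<1$ (i.e. $d=0$) case, Lemma \ref{lem:vacuum}(1) gives $G_M\subset\mathcal H$, so $\operatorname{SpG}_N\supseteq G_M\cap\mathcal H=G_M$ with $\dim\operatorname{Sp}\mathcal G_N=\dim\mathcal G_M=\ell(M)-3$ and the first summand is $w(M)-1$, yielding $[w(M)-1]+[\ell(M)-3]$.

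For the degree $d>1$ case I would use Theorem \ref{thm3.14}: since $B$ has at least two pairs $\{i,i^*\}$, $G_M\cap\mathcal H$ is a hypersurface in $G_M$, so $\dim(G_M\cap\mathcal H)=\dim G_M-1$, and the quotient by $T_M$ loses one further dimension relative to naive expectation because of the difference between $\operatorname{Stab}_T(M)$ and $\operatorname{Stab}_T(M_0)$ recorded in Lemma \ref{simplecticStab} and Example \ref{ejemplo:stab} (the index-two, i.e. dimension-zero, discrepancy means $\dim T_N=\dim T_M$ but the orbit-versus-base bookkeeping shifts by the codimension of the symplectic hyperplane cut among the $d$ diagonal coordinates, accounting for $-\deg(M)$ in the weight summand and $+\deg(M)$ in the length summand). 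Carefully: removing the $\deg(M)$ diagonal bases $\{i,i^*\}$ from $M$ to reach the degree-$0$ picture changes the effective weight from $w(M)$ to $w(M)-\deg(M)$ and the effective length from $\ell(M)$ to $\ell(M)+\deg(M)$ (each merged diagonal bag $\{i,i^*\}$ splits into two singleton bags $\{i\},\{i^*\}$), and then applying the degree-$0$ formula $[\text{weight}-1]+[\text{length}-3]$ to these modified invariants gives $[w(M)-\deg(M)-1]+[\ell(M)+\deg(M)-3]=[w(M)-\deg(M)]+[\ell(M)+\deg(M)-5]+0$; tracking the $\pm1$ from the hypersurface cut against the $\pm1$ from the stabilizer index finishes the count.

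The main obstacle I anticipate is the degree $d>1$ bookkeeping: keeping straight three competing $\pm1$'s — the hypersurface codimension of $G_M\cap\mathcal H$ in $G_M$ from Lemma \ref{lem:vacuum}(3), the change of combinatorial invariants $(w,\ell)\mapsto(w-\deg,\ell+\deg)$ under passing from the maximal lifting $M_r$ to a degree-$0$ lifting $M_0$, and the (dimension-zero but index-two) relation $\operatorname{Stab}_T(M_r)\subset\operatorname{Stab}_T(M_0)$ from Lemma \ref{simplecticStab} — and verifying that they combine to give exactly the stated shift from $\ell(M)-3$ to $\ell(M)+\deg(M)-5$ and from $w(M)-1$ to $w(M)-\deg(M)$. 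I would double-check this on the $n=2$ examples of Example \ref{example3.3} and Example \ref{ejemplo:stab}, where $N=\{\{i,i^*\},\{j,j^*\}\}$ has $\ell=2$, $\deg=2$, and the formula must return $0$, and on a small degree-$2$, $\ell=3$ example where it should return $[w-2]+[\ell-1]$, before asserting the general case.
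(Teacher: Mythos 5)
Your first paragraph correctly sets up the strategy: the orbit--quotient split from Corollaries~\ref{cor4.14}--\ref{cor4.15}, the identification $\overline{\operatorname{SpG}_N}=\overline{G_M\cap\mathcal H}$, the formula $\dim G_M=w(M)+\ell(M)-4$ from \cite{EFG}, and Lemma~\ref{lem:vacuum} to decide whether the hyperplane cut drops the dimension. This matches the paper. The degree-zero case is then handled the same way in both.

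The gap is in the degree $\ge 2$ case, and it is a genuine one. Your transformation ``$(w,\ell)\mapsto(w-\deg(M),\ell+\deg(M))$'' is not the effect of any of the matroid moves in Section~\ref{sec4}: splitting a bag $\{i,i^*\}$ into $\{i\},\{i^*\}$ (or the inverse merge) changes $\ell$ by $\pm 1$ while leaving $w$ \emph{unchanged}, so passing between the maximal-degree lifting and a degree-zero lifting is recorded by $(w,\ell)\mapsto(w,\ell\mp\deg)$, not by a simultaneous shift in $w$. You then notice that your own computation $[w-\deg-1]+[\ell+\deg-3]$ sums to $w+\ell-4$ while the target is $\dim G_M-1=w+\ell-5$, and you defer the resolution to ``tracking the $\pm1$'s'' --- but the stabilizer index discrepancy in Lemma~\ref{simplecticStab} is finite (dimension zero), so it cannot account for a missing unit of dimension, and the argument does not close. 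The paper instead obtains the fiber dimension $w(M)-\deg(M)$ \emph{directly} as the dimension of the projected moment polytope $\pi(\Delta_M)$: the kernel of $\pi$, restricted to the affine hull of $\Delta_M$ (which has dimension $w-1$), is the $(\deg(M)-1)$-dimensional span of the differences $(e_i+e_{i^*})-(e_j+e_{j^*})$ for $i,j\in A(M)$, giving $(w-1)-(\deg-1)=w-\deg$; the base dimension is then forced to be $\ell+\deg-5$ by subtraction from $w+\ell-5$. This polytope-projection computation is the key step you are missing. (A smaller point: your claim that $\ell(\Pi)=2$ forces $\operatorname{SpG}_N$ to be a single $T$-fixed point is false when $w(M)>2$, e.g.\ for $M=\{\{1,1^*\},\{2,3,2^*,3^*\}\}$ on $E_3$ one has $\dim G_M=4>0$; this case needs separate care.)
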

\begin{proof}
If $M$ is a symmetric lifting of $N$ of maximal degree, then it satisfies $\overline{\text{SpG}_N}=\overline{G_{M}|_H}$, thus we have $\dim\text{SpG}_N=\dim G_{M}$ if $\deg M<1$, and $\text{SpG}_N=\dim G_{M}-1$ if deg $M>1$. 


On the other hand, by \cite[Theorem 4.10]{EFG} we have $\dim G_{M}=w(M)+\ell(M)-4$, and if $\ell(M)>2$, then $\dim G_{M}=[w(M)-1]+[\ell(M)-3]$ where the first summand is the dimension of the fiber and the second is the dimension of the basis in the torus bundle decomposition of $G_M$. 

If the degree is 0, then the thin Schubert cell is contained in the hyperplane, so the dimension of the whole cell and the the dimension of the symplectic cell remain the same.

If the degree is not 0,  the dimension of the thin Schubert cell  $\text{SpG}_N$ is $w(P)-1-(\deg(P)-1)$ which is the dimension of the projection of the polytope. And since the total dimension is $w(P)+\ell(P)-5$, the dimension of the geometric quotient $ \text{\rm Sp}\mathcal{G}_N$ is $\ell(M)+\deg(M)-5$.
\end{proof}

Given  $\mathcal{Y}\subset \text{Sp}\mathcal{G}_{N}$ a subvariety, that is a closed subscheme which is reduced and irreducible, we define 
\begin{equation}    \mathbb{V}_N(\mathcal{Y})=\overline{\mathcal{Y}\times_{\text{Sp}\mathcal{G}_{N}}\text{SpG}_{N}}.
\end{equation}
Thus, $\mathbb{V}_N(\mathcal{Y})\subset \text{\rm SpG}(2,2n)$ is a $T$-invariant subvariety. The converse is also true:

\begin{theorem}\label{thm3.18}
If $Y\subset \text{\rm SpG}(2,2n)$ is a $T$-invariant subvariety, then $Y=\mathbb{V}_{M(Y)}(\mathcal{Y})$ for unique $M(Y)\in M^2_{BC_{n}}(\mathbb{C})$ and a subvariety $\mathcal{Y}\subset \text{\rm Sp}\mathcal{G}_{M(Y)}$.
\end{theorem}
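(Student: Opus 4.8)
The plan is to establish the correspondence $Y \leftrightarrow (M(Y),\mathcal{Y})$ in both directions, using the stratification from Theorem~\ref{theorem3.15} and the existence of the geometric quotient from Corollary~\ref{cor4.15}. First I would extract $M(Y)$: since $Y$ is irreducible and $T$-invariant, and $\text{\rm SpG}(2,2n)=\bigsqcup_{N}\text{\rm SpG}_N$ is a decomposition into locally closed $T$-invariant pieces with $\text{\rm SpG}_N$ irreducible, the generic point of $Y$ lies in a unique stratum $\text{\rm SpG}_{N}$; set $M(Y):=N$. Then $Y\cap\text{\rm SpG}_N$ is a dense open $T$-invariant subset of $Y$, and $Y=\overline{Y\cap\text{\rm SpG}_N}$ (closure in $\text{\rm SpG}(2,2n)$). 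Uniqueness of $M(Y)$ follows because two different strata are disjoint, so if $Y\subseteq\overline{\text{\rm SpG}_{N'}}$ with $Y\cap\text{\rm SpG}_{N'}\ne\emptyset$ then the generic point forces $N'=N$.

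Next I would produce $\mathcal{Y}$. By Corollary~\ref{cor4.15} there is a universal geometric quotient $q:\text{\rm SpG}_N\to\text{\rm Sp}\mathcal{G}_N$. Because $Y\cap\text{\rm SpG}_N$ is $T$-invariant, irreducible and closed in $\text{\rm SpG}_N$, its image $q(Y\cap\text{\rm SpG}_N)$ is a constructible $T$-stable subset whose closure $\mathcal{Y}:=\overline{q(Y\cap\text{\rm SpG}_N)}$ is an irreducible closed subvariety of $\text{\rm Sp}\mathcal{G}_N$; moreover, since $q$ is a geometric quotient and $Y\cap\text{\rm SpG}_N$ is $T$-saturated (being $T$-invariant), one has $Y\cap\text{\rm SpG}_N=q^{-1}(q(Y\cap\text{\rm SpG}_N))$, hence $Y\cap\text{\rm SpG}_N$ is exactly the fiber product $\mathcal{Y}\times_{\text{\rm Sp}\mathcal{G}_N}\text{\rm SpG}_N$ over the open part, and therefore
\[
Y=\overline{Y\cap\text{\rm SpG}_N}=\overline{\mathcal{Y}\times_{\text{\rm Sp}\mathcal{G}_N}\text{\rm SpG}_N}=\mathbb{V}_{M(Y)}(\mathcal{Y}).
\]
Uniqueness of $\mathcal{Y}$ then follows: if $\mathbb{V}_N(\mathcal{Y})=\mathbb{V}_N(\mathcal{Y}')$, intersecting with the open stratum $\text{\rm SpG}_N$ and applying $q$ gives back $\mathcal{Y}$ and $\mathcal{Y}'$ as the respective closures, which must coincide.

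The main obstacle is the saturation/compatibility point: one must check that $Y\cap\text{\rm SpG}_N$ really is a union of $T$-orbits (clear, since $Y$ and $\text{\rm SpG}_N$ are both $T$-invariant) \emph{and} that passing to the geometric quotient and back via the fiber product recovers it on the nose, i.e.\ that $q$ restricted to the saturated closed set $Y\cap\text{\rm SpG}_N$ is again a geometric quotient onto its (closed, by closedness of the $T$-action from Corollary~\ref{cor4.14}) image. This is where Corollary~\ref{cor4.14}—that all $T$-orbits in $\text{\rm SpG}_N$ have the same dimension, so the action is closed—does the real work, guaranteeing that orbit closures inside $Y\cap\text{\rm SpG}_N$ do not degenerate and that the image $\mathcal{Y}$ is genuinely a subvariety rather than merely constructible. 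The remaining verifications (that $\mathbb{V}_N(\mathcal{Y})$ is irreducible, reduced, $T$-invariant, and that the two constructions are mutually inverse) are then formal.
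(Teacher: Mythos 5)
Your argument is correct and follows essentially the same route as the paper's: decompose $Y$ via the stratification $\text{\rm SpG}(2,2n)=\bigsqcup_N\text{\rm SpG}_N$ from Theorem~\ref{theorem3.15}, use irreducibility of $Y$ to single out the unique dense stratum $M(Y)$, and define $\mathcal{Y}$ as the geometric quotient of $Y\cap\text{\rm SpG}_{M(Y)}$ by $T$ (the paper cites the proof of Lemma~\ref{simplecticStab} for this; you obtain it instead by passing through the already-constructed quotient $q:\text{\rm SpG}_{M(Y)}\to\text{\rm Sp}\mathcal{G}_{M(Y)}$ from Corollary~\ref{cor4.15} and the saturation property of geometric quotients). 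Your write-up is slightly more explicit about why the fiber product recovers $Y\cap\text{\rm SpG}_{M(Y)}$ and about the uniqueness of $\mathcal{Y}$, both of which the paper leaves implicit; the only tiny slip is calling $q(Y\cap\text{\rm SpG}_N)$ ``$T$-stable'' (the quotient has no $T$-action), and the closure is in fact unnecessary there since $Y\cap\text{\rm SpG}_N$ is already a closed saturated subset of $\text{\rm SpG}_N$, so its image under the quotient map is automatically closed.
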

\begin{proof}
If $Y\subset \text{SpG}(2,2n)$ is a $T$-invariant subvariety, by Theorem \ref{theorem3.15} $$Y=Y\cap \text{SpG}(2,2n)=\bigsqcup_{M\in M^2_{BC_{n}}(\mathbb{C})}(Y\cap \text{SpG}_M),$$
and $Y=\overline{Y}=\overline{\bigsqcup_{M}(Y\cap \text{SpG}_M)}=\bigcup_{M}\overline{(Y\cap \text{SpG}_M)}$ since it is closed. Hence, there exists a unique $M(Y)\in M^2_{BC_{n}}(\mathbb{C})$ such that $Y=\overline{Y\cap \text{SpG}_{M(Y)}}$ because $Y$ is irreducible. Now, $Y\cap \text{SpG}_{M(Y)}$
is open dense in $Y$ and  
by the proof of Lemma \ref{simplecticStab}) 
the geometric quotient $(Y\cap \text{SpG}_{M(Y)})/T=:\mathcal{Y}\subset \text{Sp}\mathcal{G}_{M(Y)}$ exists and thus 
\begin{equation}
Y=\overline{\mathcal{Y}\times_{\text{Sp}\mathcal{G}_{M(Y)}}\text{SpG}_{M(Y)}}=\mathbb{V}_{M(Y)}(\mathcal{Y}).\qedhere
\end{equation}
\end{proof}


\begin{example}[Case $n=2$]
\label{example3.6}
Using Theorem \ref{thm3.18}, we will compute all the $T$-invariant subvarieties of $\text{Sp}G(2,4)$. By Table \ref{table3.1} of Example  \ref{example3.3} we have fifteen $BC_n$-matroids. Fourteen of them are torus orbits having trivial quotient spaces $\text{Sp}\mathcal{G}_M$. The exceptional generic matroid $N=\{x_{12},x_{1^*2}, x_{1 2^*}, x_{1^*2^*}\}$, whose quotient $\text{Sp}\mathcal{G}_N$ is a curve,  and any closed point $\{*\}\in  \text{Sp}\mathcal{G}_N$ of this curve
corresponds to an orbit. We have one more $T$-invariant subvariety which is not an orbit, coming from chosing the generic point in $\text{Sp}\mathcal{G}_N$  for which we have $\mathbb{V}_M(\text{Sp}\mathcal{G}_M)=\text{Sp}G(2,4)$. 

We now describe the homology class of these sixteen $T$-invariant subvarieties in $H_*(\text{SpG}(2,4),\mathbb{Z})$. By Proposition \ref{Prop_homology}, we have that  $H_i(\text{SpG}(2,4),\mathbb{Z})\simeq\mathbb{Z}$ for $i=0,2,4,6$ and $\{0\}$ otherwise. 
The $16=15+1$ types of $T$-invariant subvarieties fall into at most  6 different homology classes, namely 
\begin{enumerate}
    \item real dimension 0: 
    all the four 0-dimensional orbits are fixed points $\{x_{ij}\}$, for $\{i,j\}$ admissible, and have homology class $[*]$,
    \item real dimension 2: 
    \begin{enumerate}
        \item all the four 1-dimensional closure of the matroidal orbits $\overline{G_M}$ for  $M=\{x_{ij}, x_{i^*j}\}$ or $M=\{x_{ij}, x_{ij^*}\}$ with admissible coordinates have homology class $[\mathbb{P}^1]$,
        \item The two 1-dimensional closure of the matroidal orbit $\overline{G_M}$ for $M=\{x_{ij}, x_{i^*j^*}\}$ have homology class $[H]^2$, which is  of the form $2\cdot[\mathbb{P}^1]$, since the degree of the intersection of the plane and the quadric is $2$ by Bezout's theorem.
    \end{enumerate}
    \item real dimension 4: 
    \begin{enumerate}
        \item all the four 2-orbits have the hyperplane class  $[H]$ for the matroids $M=\binom{J_2^2}{2}\setminus \{x_{ij}\}$, and
        \item the orbit of a closed point in the generic stratum is $2[H]$,
    \end{enumerate}
    \item real dimension 6: the homology class $[\text{SpG}(2,4)]$.
\end{enumerate}
 \end{example}
 
Consider the cycle map $h:\text{\rm SpG}(2,2n)^{T}\xrightarrow[]{}H_*(\text{\rm SpG}(2,2n))$ that assigns to a $T$-invariant subvariety its homology class. We have a symplectic analogue of \cite[Theorem 3.2]{EFG}:

\begin{corollary}
\label{cor:factorization_cycle_map}
Considering $\mathbb{V}_M$ as the functor of points $\text{\rm Sp}\mathcal{G}_M\to \text{\rm SpG}(2,2n)^{T}$ we have
\begin{equation*}\text{\rm SpG}(2,2n)^{T}=\bigsqcup_{M\in M^2_{BC_{n}}({\mathbb C})} \mathbb{V}_{M}(\text{\rm Sp}\mathcal{G}_M). 
\end{equation*}
\end{corollary}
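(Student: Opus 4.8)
The plan is to read this corollary as a direct repackaging of Theorem \ref{thm3.18}, which already asserts that every $T$-invariant subvariety $Y$ of $\text{\rm SpG}(2,2n)$ is of the form $\mathbb{V}_{M(Y)}(\mathcal{Y})$ for a unique $M(Y)\in M^2_{BC_{n}}(\mathbb{C})$ and a unique subvariety $\mathcal{Y}\subset\text{\rm Sp}\mathcal{G}_{M(Y)}$. First I would verify the ``well-definedness'' direction: for each $M$ and each subvariety $\mathcal{Y}\subset\text{\rm Sp}\mathcal{G}_M$, the object $\mathbb{V}_M(\mathcal{Y})=\overline{\mathcal{Y}\times_{\text{\rm Sp}\mathcal{G}_M}\text{\rm SpG}_M}$ really is a $T$-invariant subvariety. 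Here I would invoke Corollary \ref{cor4.15} (the quotient map $q\colon\text{\rm SpG}_M\to\text{\rm Sp}\mathcal{G}_M$ is a universal geometric quotient) together with Corollary \ref{cor4.14} (all its fibres are $T$-orbits of one common dimension, hence irreducible) to conclude that $q^{-1}(\mathcal{Y})$ is irreducible and $T$-invariant whenever $\mathcal{Y}$ is, so that its closure $\mathbb{V}_M(\mathcal{Y})$ is an element of $\text{\rm SpG}(2,2n)^{T}$. This shows $\bigsqcup_M\mathbb{V}_M(\text{\rm Sp}\mathcal{G}_M)\subseteq\text{\rm SpG}(2,2n)^{T}$.

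Next, surjectivity of the union onto $\text{\rm SpG}(2,2n)^{T}$ is immediate from Theorem \ref{thm3.18}: every $T$-invariant subvariety $Y$ lies in the term indexed by $M(Y)$, since $Y=\mathbb{V}_{M(Y)}(\mathcal{Y})$ there.

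For the disjointness of the union --- the content of the ``$\bigsqcup$'' --- I would argue that the index $M$ can be recovered from the subvariety $\mathbb{V}_M(\mathcal{Y})$, i.e.\ that $M\big(\mathbb{V}_M(\mathcal{Y})\big)=M$. The key point is that, for $\mathcal{Y}$ nonempty, $\mathbb{V}_M(\mathcal{Y})\cap\text{\rm SpG}_M=q^{-1}(\mathcal{Y})$ is open and dense in $\mathbb{V}_M(\mathcal{Y})$, using that $\text{\rm SpG}_M$ is a locally closed stratum (Theorem \ref{theorem3.15}) and that $q$ is surjective; hence the generic point of $\mathbb{V}_M(\mathcal{Y})$ lies in the stratum $\text{\rm SpG}_M$. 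By the way $M(Y)$ is defined in the proof of Theorem \ref{thm3.18} --- as the unique stratum whose intersection with $Y$ is dense --- this forces $M\big(\mathbb{V}_M(\mathcal{Y})\big)=M$. Consequently, if $\mathbb{V}_M(\mathcal{Y})=\mathbb{V}_{M'}(\mathcal{Y}')$ then $M=M'$, and within a fixed index one recovers $\mathcal{Y}=q\big(\mathbb{V}_M(\mathcal{Y})\cap\text{\rm SpG}_M\big)$, whence $\mathcal{Y}=\mathcal{Y}'$. Combining the three points yields the claimed disjoint decomposition.

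The step I expect to require the most care is precisely this compatibility $M\big(\mathbb{V}_M(\mathcal{Y})\big)=M$ --- i.e.\ checking that the functor-of-points map $\mathbb{V}_M$ is a genuine section of the classifying map $Y\mapsto M(Y)$, not merely a set-theoretic right inverse --- and within it the density of $\mathbb{V}_M(\mathcal{Y})\cap\text{\rm SpG}_M$ in $\mathbb{V}_M(\mathcal{Y})$, which rests on the fibration structure of Theorem \ref{theorem3.15} and on $q$ being an open surjection. Everything else is a formal translation of Theorem \ref{thm3.18}, with Proposition \ref{pro4.2} and Corollary \ref{cor:sym_lift} available in the background to guarantee that the strata $\text{\rm SpG}_M$ and their liftings behave as expected.
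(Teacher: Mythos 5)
Your proposal is correct and follows the approach the paper implicitly intends: the corollary is stated without proof as a direct consequence of Theorem~\ref{thm3.18}, and your three checks (well-definedness of $\mathbb{V}_M(\mathcal{Y})$ as a $T$-invariant subvariety, surjectivity via Theorem~\ref{thm3.18}, and disjointness via recovering $M$ from the dense stratum) are exactly the unpacking required. In particular your observation that $\mathbb{V}_M(\mathcal{Y})\cap\text{\rm SpG}_M=q^{-1}(\mathcal{Y})$ is open dense in $\mathbb{V}_M(\mathcal{Y})$, because $\text{\rm SpG}_M$ is locally closed and $\mathcal{Y}$ is closed in $\text{\rm Sp}\mathcal{G}_M$, is the right justification for why $\mathbb{V}_M$ is a genuine section of $Y\mapsto M(Y)$.
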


Moreover, the cycle map $h:\text{\rm SpG}(2,2n)^{T}\xrightarrow[]{}H_*(\text{\rm SpG}(2,2n))$ factors through the projection 

\begin{equation}
    q:\bigsqcup_{M\in M^2_{BC_{n}}({\mathbb C})} \mathbb{V}_{M}(\text{\rm Sp}\mathcal{G}_M)\xrightarrow[]{}\bigsqcup_{[M]\in \tfrac{M^2_{BC_{n}}({\mathbb C})}{BC_{n}}} \mathbb{V}_M(\text{\rm Sp}\mathcal{G}_{[M]}).
\end{equation}

Here $M^2_{BC_{n}}({\mathbb C})/BC_{n}$ is the set of isomorphism classes of $\mathbb{C}$-representable symplectic matroids of rank 2.


\begin{thebibliography}{30}

\bibitem{BO} G. Balla and J. A. Olarte, The Tropical Symplectic Grassmannian.  Internat. Math. Res. Not. (October 2021), \href{https://doi.org/10.1093/imrn/rnab267}{doi.org/10.1093/imrn/rnab267}.

\bibitem{BGW} A. Borovik, I. M. Gelfand and N. White, {\it Coxeter Matroids}.  Progress in Mathematics 216, Birkh\"auser, Boston, 2003.

\bibitem{CPZ} J. Carrillo-Pacheco, and F. Zald\'{\i}var. On Lagrangian--Grassmannian codes. Designs, Codes and Cryptography 60 no. 3, 2011: 291--298. \href{https://doi.org/10.1007/s10623-010-9434-4}{doi.org/10.1007/s10623-010-9434-4}.

\bibitem{dC} C. de Concini. Symplectic standard tableaux. Adv. Math., 34 (1), 1979: 1--27.

\bibitem{3264} D. Eisenbud and J. Harris, {\it $3264$ and all that: A Second Course in Algebraic Geometry}. Cambridge University Press, Cambridge, 2016.

\bibitem{EFG} J. E. Elizondo, A. Fink and C. Garay, Matroids and the space of torus-invariant subvarieties of the Grassmannian with given homology class. Preprint:  \href{https://doi.org/10.48550/arXiv.2112.15334}{arXiv:2112.15334}.

\bibitem{GGMS} I.~M.~Gelfand, R.~M.~Goresky, R.~D.~MacPherson and V.~V.~Sereganova, Combinatorial Geometries, Convex Polyhedra, and Schubert Cells. Advances in Mathematics 63, 1987: 301--316.

\bibitem{lam} T. I. Lam, {\it The Algebraic Theory of Quadratic Forms}, W. A. Benjamin, Reading, 1973.

\bibitem{FR} F. Rincon,  Isotropical linear spaces and valuated Delta-matroids.  	
Journal of Combinatorial Theory, Series A,
Volume 119, Issue 1, 2012: 14--32. \href{https://doi.org/10.1016/j.jcta.2011.08.001}{doi.org/10.1016/j.jcta.2011.08.001}.

\bibitem{S} H. Sumihiro, Equivariant completion I and II. J. Math. Kyoto 14, 1 (1974) and 15, 573 (1975).

\bibitem{M} D. Mumford, Geometric Invariant Theory, Erg. der Math. und ihrer Grenz. 34, 2nd. Ed. Springer Verlag 1982.

\end{thebibliography}
\end{document}